\newtheorem{thm}{Theorem}[section]
\newtheorem{prop}[thm]{Proposition}
\newtheorem{lemma}[thm]{Lemma}
\newtheorem{cor}[thm]{Corollary}
\newtheorem{question}[thm]{Question}
\theoremstyle{definition}
\newtheorem{claim}[thm]{Claim}
\theoremstyle{remark}
\numberwithin{equation}{section}
\newcommand{\Q}{\mathbb Q}
\newcommand{\F}{\mathbb F}
\newcommand{\Z}{\mathbb Z}
\newcommand{\G}{\mathbb G}
\renewcommand{\P}{\mathbb P}
\newcommand{\Spec}{\operatorname{Spec}}
\newcommand{\Br}{\operatorname{Br}}
\newcommand{\cl}{\overline}
\newcommand{\set}[1]{\left\{#1\right\}}
\renewcommand{\phi}{\varphi}
\newcommand{\on}[1]{\operatorname{#1}}
\newcommand{\ang}[1]{\langle{#1}\rangle}
\newcommand{\Prod}{\operatornamewithlimits{\textstyle\prod}}
\newcommand{\Sum}{\operatornamewithlimits{\textstyle\sum}}
\newcommand\smallbullet{
	\raisebox{-0.25ex}{\scalebox{1.2}{$\cdot$}}
}
\title{Non-formality of Galois cohomology modulo all primes}
\address{Department of Mathematics\\
	University of California\\
	Los Angeles, CA 90095\\ 
	United States of America}
\author{Alexander Merkurjev}
\email{merkurev@math.ucla.edu}
\author{Federico Scavia}
\email{scavia@math.ucla.edu}
\date{September 2023}
\subjclass[2020]{12G05; 55S30, 16K50}
\begin{document}

	\begin{abstract}
		Let $p$ be a prime number and let $F$ be a field of characteristic different from $p$. We prove that there exist a field extension $L/F$ and $a,b,c,d$ in $L^{\times}$ such that $(a,b)=(b,c)=(c,d)=0$ in $\on{Br}(F)[p]$ but $\langle a,b,c,d\rangle$ is not defined over $L$. Thus the Strong Massey Vanishing Conjecture at the prime $p$ fails for $L$, and the cochain differential graded ring $C^{\smallbullet}(\Gamma_L,\Z/p\Z)$ of the absolute Galois group $\Gamma_L$ of $L$ is not formal. This answers a question of Positselski.  
	\end{abstract}

	\maketitle
	
	\section{Introduction}
	Let $p$ be a prime number, let $F$ be a field of characteristic different from $p$ and containing a primitive $p$-th root of unity $\zeta$, and let $\Gamma_F$ be the absolute Galois group of $F$. The Norm-Residue Isomorphism Theorem of Voevodsky and Rost \cite{haesemeyer2019norm} gives an explicit presentation by generators and relations of the cohomology ring $H^{\smallbullet}(F,\Z/p\Z)=H^{\smallbullet}(\Gamma_F,\Z/p\Z)$. In view of this complete description of the cup product, the research on $H^{\smallbullet}(F, \Z/p\Z)$ shifted in recent years to external operations, defined in terms of the differential graded ring of continuous cochains $C^{\smallbullet}(\Gamma_F, \Z/p\Z)$. 
	
	Hopkins--Wickelgren \cite{hopkins2015splitting} asked whether $C^{\smallbullet}(\Gamma_F, \Z/p\Z)$ is formal for every field $F$ and every prime $p$. Loosely speaking, this amounts to saying that no essential information is lost when passing from $C^{\smallbullet}(\Gamma_F, \Z/p\Z)$ to $H^{\smallbullet}(F, \Z/p\Z)$. Positselski \cite{positselski2017koszulity} showed that $C^{\smallbullet}(\Gamma_F, \Z/p\Z)$ is not formal for some finite extensions $F$ of $\Q_{\ell}$ and $\F_{\ell}((z))$, where $\ell\neq p$. He then posed the following question; see \cite[p. 226]{positselski2017koszulity}.
	
	\begin{question}[Positselski]\label{non-formal-question}
		Does there exist a field $F$ containing all roots of unity of $p$-power order such that $C^{\smallbullet}(\Gamma_F,\Z/p\Z)$ is not formal?
	\end{question}
	
	We showed in \cite[Theorem 1.6]{merkurjev2022degenerate} that \Cref{non-formal-question} has a positive answer when $p=2$. In the present work we provide examples showing that the answer to \Cref{non-formal-question} is affirmative for all primes $p$.
	
	\begin{thm}\label{main-nonformal}
		Let $p$ be a prime number and let $F$ be a field of characteristic different from $p$. There exists a field $L$ containing $F$ such that the differential graded ring $C^{\smallbullet}(\Gamma_L,\Z/p\Z)$ is not formal.
	\end{thm}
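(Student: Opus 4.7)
The plan is to deduce non-formality from a failure of Strong Massey Vanishing. If $C^{\smallbullet}(\Gamma_L,\Z/p\Z)$ is formal, then it is $A_\infty$-equivalent to its cohomology ring equipped with zero higher operations, and in such a dga any higher Massey product $\ang{a_1,\dots,a_n}$ with $(a_i,a_{i+1})=0$ for all $i$ is automatically defined and contains $0$ (take all intermediate defining cochains to be zero). Consequently, to prove the theorem it suffices to exhibit a field $L$ over $F$ and elements $a,b,c,d\in L^\times$ with $(a,b)=(b,c)=(c,d)=0$ in $\on{Br}(L)[p]\cong H^2(L,\Z/p\Z)$ for which the $4$-fold Massey product $\ang{a,b,c,d}$ is \emph{not defined}, exactly as promised in the abstract.

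For the construction I would take a generic approach. After adjoining a primitive $p$-th root of unity $\zeta$, begin with the purely transcendental extension $F_0=F(\zeta)(x_1,x_2,x_3,x_4)$; the three symbols $(x_i,x_{i+1})$ are non-trivial in $\on{Br}(F_0)[p]$. Let $L$ be the function field over $F_0$ of the product of the three Rost--Voevodsky norm varieties (Severi--Brauer varieties when $p=2$) generically splitting the symbol algebras $(x_i,x_{i+1})$. By construction all three consecutive symbols vanish in $\on{Br}(L)[p]$, so both inner triple Massey products $\ang{x_1,x_2,x_3}$ and $\ang{x_2,x_3,x_4}$ are defined over $L$.

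The heart of the proof is then to show that $\ang{x_1,x_2,x_3,x_4}$ is not defined over $L$. Definedness of a $4$-fold Massey product requires a \emph{compatible} system of defining cochains for the two inner triples; by Dwyer's translation this is equivalent to the lifting of the continuous Galois representation $\Gamma_L\to (\Z/p\Z)^4$ determined by $x_1,\dots,x_4$ through a prescribed central extension by a unipotent group. The obstruction to such a lift naturally lives in $H^2(L,\Z/p\Z)$ modulo the indeterminacy coming from the outer classes, and its non-vanishing is what must be proven. The main obstacle is precisely to show that this obstruction survives restriction from $F_0$ to $L$; for this I would exploit the structure of the Brauer group and motivic cohomology of the norm varieties (via index-reduction results and the Norm-Residue Isomorphism) to identify which $H^2$-classes are killed by passing to $L$, and to verify that the Massey obstruction lies outside that subgroup. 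This is the key technical point needed to extend the authors' $p=2$ construction from \cite{merkurjev2022degenerate} to all primes.
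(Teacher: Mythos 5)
Your reduction is correct and matches the paper: formality forces $\ang{\chi_1,\chi_2,\chi_3,\chi_4}$ to be defined whenever the consecutive cup products vanish (this is \Cref{formal-defined}, i.e.\ the quasi-isomorphism invariance of Massey products applied to the zero-differential model), so it suffices to produce $L$ and classes with $\chi_i\cup\chi_{i+1}=0$ but $\ang{\chi_1,\chi_2,\chi_3,\chi_4}$ undefined. Your choice of example also differs harmlessly in one respect from the paper's: you take four independent indeterminates and split all three symbols generically, whereas the paper takes Steinberg symbols $a=1-x$, $b=x$, $c=y$, $d=1-y$ so that $(a,b)=(c,d)=0$ already hold and only the single symbol $(b,c)=(x,y)$ needs to be killed by passing to the function field of one Severi--Brauer variety. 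The paper's choice is not cosmetic: having only one splitting variety in play is what makes the subsequent computation tractable.

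The genuine gap is the entire second half. Everything after ``The heart of the proof is then to show that $\ang{x_1,x_2,x_3,x_4}$ is not defined over $L$'' is a description of the difficulty, not an argument. Two concrete problems. First, the obstruction to lifting $\Gamma_L\to(\Z/p\Z)^4$ through $\cl{U}_5$ does not ``naturally live in $H^2(L,\Z/p\Z)$ modulo indeterminacy'': the kernel $\cl{Q}_5$ of $\cl{U}_5\to(\Z/p\Z)^4$ is a rank-$5$ module with nontrivial $\Gamma_L$-action twisted by $\chi$, and there is no single well-defined $H^2$-class whose survival under restriction you could test. The paper instead spends \Cref{u5-bar-section} converting definedness into an explicit solvability statement (\Cref{uu5}: existence of $u,v,w$ with $N_a(u)N_d(v)=w^p$ and $(\sigma_b-1)(\sigma_c-1)w=\zeta$), i.e.\ the triviality of a torsor $E_w$ under an explicit torus $T$. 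Second, and more seriously, your proposed method for the surviving-obstruction step --- index reduction and identifying which classes die in $\Br$ or $H^2$ upon passing to the function field of the splitting varieties --- is a \emph{primary} (residue-type) obstruction, and the paper shows this is exactly what fails here: the torsor $E_w$ is unramified (\Cref{keyprop}(1)), so no residue or index-reduction computation can detect its nontriviality. The paper has to introduce a secondary invariant $\theta$ defined only on unramified torsor classes (\Cref{appendix-a}, \Cref{appendix-b}) and compute $\theta([E_w])=-\cl{\eta}\neq 0$ via the degree map on $\on{Pic}$ of the Severi--Brauer variety (\Cref{invar}, \Cref{keyprop}(2)). Without this finer obstruction, or some substitute for it, your outline does not close.
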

	
	In order to detect non-formality of the cochain differential graded ring, we use Massey products. For any $n\geq 2$ and all $\chi_1,\dots,\chi_n\in H^1(F,\Z/p\Z)$, the Massey product of $\chi_1,\dots,\chi_n$ is a certain subset $\ang{\chi_1,\dots,\chi_n}\subset H^2(F,\Z/p\Z)$; see \Cref{massey-section} for the definition. We say that $\ang{\chi_1,\dots,\chi_n}$ is defined if it is not empty, and that it vanishes if it contains $0$. When $\on{char}(F)\neq p$ and $F$ contains a primitive $p$-th root of unity $\zeta$, Kummer Theory gives an identification $H^1(F,\Z/p\Z)=F^{\times}/F^{\times p}$, and we may thus consider Massey products $\ang{a_1,\dots,a_n}$, where $a_i\in F^\times$ for $1\leq i\leq n$.
	
	Let $n\geq 3$ be an integer, let $\chi_1,\dots,\chi_n\in H^1(F,\Z/p\Z)$, and consider the following assertions:
	\begin{align}
		\label{assertion-1} & \text{The Massey product $\ang{\chi_1,\dots,\chi_n}$ vanishes.} \\
		\label{assertion-2} & \text{The Massey product $\ang{\chi_1,\dots,\chi_n}$ is defined.} \\
		\label{assertion-3} & \text{We have $\chi_i\cup\chi_{i+1}=0$ for all $1\leq i\leq n-1$.}
	\end{align}
	We have that (\ref{assertion-1}) implies (\ref{assertion-2}), and that (\ref{assertion-2}) implies (\ref{assertion-3}). The Massey Vanishing Conjecture, due to Min\'{a}\v{c}--T\^{a}n \cite{minac2017triple} and inspired by the earlier work of Hopkins--Wickelgren \cite{hopkins2015splitting}, predicts that (\ref{assertion-2}) implies (\ref{assertion-1}). This conjecture has sparked a lot of activity in recent years. When $F$ is an arbitrary field, the conjecture is known when either $n=3$ and $p$ is arbitrary, by Efrat--Matzri and Min\'{a}\v{c}--T\^{a}n \cite{matzri2014triple, efrat2017triple, minac2016triple}, or $n=4$ and $p=2$, by \cite{merkurjev2023massey}. When $F$ is a number field, the conjecture was proved for all $n\geq 3$ and all primes $p$, by Harpaz--Wittenberg \cite{harpaz2019massey}. 
	
	When $n=3$, it is a direct consequence of the definition of Massey product that (\ref{assertion-3}) implies (\ref{assertion-2}). Thus (\ref{assertion-1}), (\ref{assertion-2}) and (\ref{assertion-3}) are equivalent when $n=3$.
	
	In \cite[Question 4.2]{minac2017counting}, Min\'{a}\v{c} and T\^{a}n asked whether (\ref{assertion-3}) implies (\ref{assertion-1}). This became known as the Strong Massey Vanishing Conjecture (see e.g. \cite{pal2018strong}): If $F$ is a field, $p$ is a prime number and $n\geq 3$ is an integer then, for all characters $\chi_1,\dots,\chi_n\in H^1(F,\Z/p\Z)$ such that $\chi_i\cup\chi_{i+1}=0$ for all $1\leq i\leq n-1$, the Massey product $\ang{\chi_1,\dots,\chi_n}$ vanishes.
	
	The Strong Massey Vanishing Conjecture implies the Massey Vanishing Conjecture. However, Harpaz and Wittenberg produced a counterexample to the Strong Massey Vanishing Conjecture, for $n=4$, $p=2$ and $F=\Q$; see \cite[Example A.15]{guillot2018fourfold}. More precisely, if we let $b=2$, $c=17$ and $a=d=bc=34$, then $(a,b)=(b,c)=(c,d)=0$ in $\on{Br}(\Q)$ but $\ang{a,b,c,d}$ is not defined over $\Q$. In this example, the classes of $a,b,c,d$ in $F^{\times}/F^{\times 2}$ are not $\F_2$-linearly independent modulo squares. In fact, by a theorem of Guillot--Min\'{a}\v{c}--Topaz--Wittenberg \cite{guillot2018fourfold}, if $F$ is a number field and $a,b,c,d$ are independent in $F^\times /F^{\times 2}$ and satisfy $(a,b)=(b,c)=(c,d)=0$ in $\on{Br}(F)$, then $\ang{a,b,c,d}$ vanishes. 
	
	If $F$ is a field for which the Strong Massey Vanishing Conjecture fails, for some $n\geq 3$ and some prime $p$, then $C^{\smallbullet}(\Gamma_F,\Z/p\Z)$ is not formal; see \Cref{formal-defined} for the $n=4$ case. Therefore \Cref{main-nonformal} follows from the next more precise result.
	
	\begin{thm}\label{main-explicit}
		Let $p$ be a prime number, let $F$ be a field of characteristic different from $p$. There exist a field $L$ containing $F$ and $\chi_1,\chi_2,\chi_3,\chi_4\in H^1(L,\Z/p\Z)$ such that $\chi_1\cup\chi_2=\chi_2\cup\chi_3=\chi_3\cup\chi_4=0$ in $H^2(L,\Z/p\Z)$ but $\ang{\chi_1,\chi_2,\chi_3,\chi_4}$ is not defined. Thus the Strong Massey Vanishing conjecture at $n=4$ and the prime $p$ fails for $L$, and $C^{\smallbullet}(\Gamma_L,\Z/p\Z)$ is not formal.
	\end{thm}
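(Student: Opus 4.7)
The plan is to construct $L$ as the function field of a carefully chosen variety $X$ over $F$, together with explicit Kummer elements $a, b, c, d \in L^\times$ playing the role of the four characters. Since the conclusion concerns a superfield of $F$, we may first replace $F$ by $F(\zeta)$ and assume $F$ contains a primitive $p$-th root of unity $\zeta$. Kummer theory then identifies $H^1(L, \Z/p\Z)$ with $L^\times/L^{\times p}$, and the condition $(a,b) = 0$ in $\on{Br}(L)[p]$ becomes the splitting of the cyclic algebra $(a,b)_\zeta$, equivalently that $b$ is a norm from $L(\sqrt[p]{a})/L$.

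Taking our cue from the Harpaz--Wittenberg counterexample at $p=2$, in which $a = d = bc$ forces a multiplicative dependence among the four Kummer elements (and in which, as recalled in the introduction, linear independence modulo $p$-th powers would have forced the Massey product to vanish), we impose an analogous dependence in $L^\times/L^{\times p}$. We then let $X$ parametrize quadruples $(a,b,c,d)$ satisfying such a dependence together with auxiliary witnesses for the three norm relations (for instance, elements of the Kummer extensions realizing $b$, $c$, $d$ as norms with respect to the appropriate cyclic algebras), and set $L = F(X)$. By construction, the generic point of $X$ produces Kummer elements $a,b,c,d$ with $(a,b) = (b,c) = (c,d) = 0$ in $\on{Br}(L)[p]$, and hence characters $\chi_1,\dots,\chi_4 \in H^1(L,\Z/p\Z)$ satisfying $\chi_i \cup \chi_{i+1} = 0$ for $1 \leq i \leq 3$.

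The core of the proof is to show that no defining system for $\ang{\chi_1,\chi_2,\chi_3,\chi_4}$ exists on $L$. Under the standard dictionary for Massey products, a defining system corresponds to a continuous homomorphism $\Gamma_L \to U_5(\F_p)/Z$ lifting the homomorphism $\Gamma_L \to (\Z/p\Z)^4$ determined by the $\chi_i$, where $U_5(\F_p)$ is the unipotent upper-triangular group and $Z$ its center; the obstruction to such a lifting is measured by a class in a suitable $H^2$. We aim to exhibit a concrete obstruction class that is nonzero on $L$ but whose vanishing would be forced by the existence of a defining system. The main obstacle is producing this obstruction: the multiplicative dependence among $a,b,c,d$ is what both enables the cup product vanishing and generates the obstruction, so $X$ must be engineered so that at its generic point the obstruction class survives while the three cup products simultaneously vanish. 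Once the Strong Massey Vanishing Conjecture at $n=4$ is shown to fail for $L$, the non-formality of $C^\smallbullet(\Gamma_L, \Z/p\Z)$ follows from \Cref{formal-defined}.
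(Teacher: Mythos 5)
Your proposal correctly sets up the framework (reduction to the case $\zeta\in F$, Kummer theory, Dwyer's dictionary between defining systems and lifts of $\Gamma_L\to(\Z/p\Z)^4$ to $\cl{U}_5$, and the final appeal to \Cref{formal-defined}), but the central step --- proving that no lift to $\cl{U}_5$ exists over your field $L$ --- is not carried out; you explicitly defer it (``we aim to exhibit a concrete obstruction class \dots the main obstacle is producing this obstruction''). That is precisely where all the work lies. Note moreover that the kernel $\cl{Q}_5$ of $\cl{U}_5\to(\Z/p\Z)^4$ is abelian but not central, so the obstruction to lifting is not a single class in an ordinary $H^2$; one must either work with nonabelian cohomology or, as the paper does, translate the lifting problem into the solvability of an explicit norm equation $N_a(u)N_d(v)=w^p$ with $(\sigma_b-1)(\sigma_c-1)w=\zeta$ (\Cref{uu5}), which requires the entire analysis of Galois $U_3$-, $\cl{U}_4$-, $U_4$- and $\cl{U}_5$-algebras in \Cref{u5-bar-section}. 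Without some such concrete reformulation you have no handle on the obstruction you hope to exhibit.

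Second, your guiding heuristic points in the wrong direction. You propose to imitate Harpaz--Wittenberg by imposing a multiplicative dependence among $a,b,c,d$; but in the paper's counterexample the classes of $a=1-x$, $b=x$, $c=y$, $d=1-y$ in $L^{\times}/L^{\times p}$ (with $L=F(x,y)(X)$ and $X$ the Severi--Brauer variety of $(x,y)$) are $\F_p$-linearly \emph{independent}. The dependence $a=d=bc$ in the Harpaz--Wittenberg example is not what creates the obstruction there --- that obstruction is genuinely arithmetic, tied to the primes $2$ and $17$ in $\Q$ --- and over a generic base with a forced dependence there is no reason any obstruction should survive; indeed the theorem of Guillot--Min\'a\v{c}--Topaz--Wittenberg shows that over number fields it is independence, not dependence, that is compatible with vanishing. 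Finally, even with the correct choice of $a,b,c,d$, the naive obstruction (ramification of the resulting torsor $E_w$) vanishes: the paper must introduce a secondary invariant $\theta$ defined only for unramified torsors (\Cref{appendix-a}, \Cref{appendix-b}) and compute it explicitly (\Cref{keyprop}) to conclude that $E_w$ is nontrivial. Your proposal contains no candidate for such an invariant, so the argument as sketched cannot be completed.
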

	This gives the first counterexamples to the Strong Massey Vanishing Conjecture for all odd primes $p$. We easily deduce that (\ref{assertion-3}) does not imply (\ref{assertion-2}) for all $n\geq 4$, in general: indeed, if the fourfold Massey product $\ang{\chi_1,\chi_2,\chi_3,\chi_4}$ is not defined, neither is the $n$-fold Massey product $\ang{\chi_1,\chi_2,\chi_3,\chi_4,0,\dots,0}$. \Cref{main-explicit} was proved in \cite[Theorem 1.6]{merkurjev2022degenerate} when $p=2$, and is new when $p$ is odd. Our proof of \Cref{main-explicit} is uniform in $p$.
	
	We now describe the main ideas that go into the proof of \Cref{main-explicit}. We may assume without loss of generality that $F$ contains a primitive $p$-th root of unity. In \Cref{section-preliminaries}, we collect preliminaries on formality, Massey products and Galois algebras. In particular, we recall Dwyer's Theorem (see \Cref{dwyer}): a Massey product $\ang{\chi_1,\dots,\chi_n}\subset H^2(F,\Z/p\Z)$ vanishes (resp. is defined) if and only if the homomorphism $(\chi_1,\dots,\chi_n)\colon \Gamma_F\to (\Z/p\Z)^n$ lifts to the group $U_{n+1}$ of upper unitriangular matrices in $\on{GL}_{n+1}(\F_p)$ (resp. to the group $\cl{U}_{n+1}$ of upper unitriangular matrices in $\on{GL}_{n+1}(\F_p)$ with top-right corner removed). As for \cite[Theorem 1.6]{merkurjev2022degenerate}, our approach is based on \Cref{dwyer-cor}, which is a restatement of \Cref{dwyer} in terms of Galois algebras.
	
	In \Cref{u5-bar-section}, we show that a fourfold Massey product $\ang{a,b,c,d}$ is defined over $F$ if and only if a certain system of equations admits a solution over $F$, and the variety defined by these equations is a torsor under a torus; see \Cref{uu5}. This is done by using Dwyer's \Cref{dwyer} to rephrase the property that $\ang{a,b,c,d}$ is defined in terms of $\cl{U}_5$-Galois algebras, and then by a detailed study of Galois $G$-algebras, for $G=U_3,\cl{U}_4,U_4,\cl{U}_5$. As a consequence, we also obtain an alternative proof of the Massey Vanishing Conjecture for $n=3$ and any prime $p$; see \Cref{mvc-3}. 
	
	In \Cref{generic-variety-section}, we use the work of \Cref{uu5-sec} to construct a ``generic variety'' for the property that $\ang{a,b,c,d}$ is defined. More precisely, under the assumption that $(a,b)=(c,d)=0$ in $\on{Br}(F)$ and letting $X$ be the Severi-Brauer variety of $(b,c)$, we construct an $F$-torus $T$, and a $T_{F(X)}$-torsor $E_w$ such that, if $E_w$ is non-trivial, then $\ang{a,b,c,d}$ is not defined over $F(X)$; see \Cref{generic-var}. The definition of $E_w$ depends on a rational function $w\in F(X)^\times$, which we construct in \Cref{construct-w}(3). 
	
	Since $(a,b)=(b,c)=(c,d)=0$ in $\on{Br}(F(X))$, the proof of \Cref{main-explicit} will be complete once we give an example of $a,b,c,d$ for which the corresponding torsor $E_w$ is non-trivial. Here we consider the generic quadruple $a,b,c,d$ such that $(a,b)$ and $(c,d)$ are trivial. More precisely, we let $x,y$ be two variables over $F$, and replace $F$ by $E\coloneqq F(x,y)$. We then set $a\coloneqq 1-x$, $b\coloneqq x$, $c\coloneqq y$ and $d\coloneqq 1-y$ over $E$. We have $(a,b)=(b,c)=0$ in $\Br(E)$. The class $(b,c)$ is not zero in $\Br(E)$, so that the Severi-Brauer variety $X/E$ of $(b,c)$ is non-trivial, but $(b,c)=0$ over $L\coloneqq E(X)$. 
	
	It is natural to attempt to prove that $E_w$ is non-trivial over $L$ by performing residue calculations to deduce that this torsor is ramified. However, the torsor $E_w$ is in fact unramified. We are thus led to consider a finer obstruction to the triviality of $E_w$. This ``secondary obstruction'' is only defined for unramified torsors. We describe the necessary homological algebra in \Cref{appendix-a}, and we define the obstruction and give a method to compute it in \Cref{appendix-b}. In \Cref{section-5}, an explicit calculation shows that the obstruction is non-zero on $E_w$, and hence $E_w$ is non-trivial, as desired.
	
	\subsection*{Notation}
	Let $F$ be a field, let $F_s$ be a separable closure of $F$, and denote by $\Gamma_F\coloneqq \on{Gal}(F_s/F)$ the absolute Galois group of $F$.
	
	If $E$ is an $F$-algebra, we let $H^i(E,-)$ be the \'etale cohomology of $\Spec(E)$ (possibly non-abelian if $i\leq 1$). If $E$ is a field, $H^i(E,-)$ may be identified with the continuous cohomology of $\Gamma_E$. 
	
	We fix a prime $p$, and we suppose that $\on{char}(F)\neq p$. If  $E$ is an $F$-algebra and $a_1,\dots,a_n\in E^{\times}$, we define the \'etale $E$-algebra $E_{a_1,\dots,a_n}$ by \[E_{a_1,\dots,a_n}\coloneqq E[x_1,\dots,x_n]/(x_1^p-a_1,\dots,x_n^p-a_n)\]
	and we set $(a_i)^{1/p}\coloneqq x_i$. More generally, for all integers $d$, we set $(a_i)^{d/p}\coloneqq x_i^d$. We denote by $R_{a_1,\dots,a_n}(-)$ the functor of Weil restriction along $F_{a_1,\dots,a_n}/F$. In particular $R_{a_1,\dots,a_n}(\G_{\on{m}})$ is the quasi-trivial torus associated to $F_{a_1,\dots,a_n}/F$, and we denote by $R^{(1)}_{a_1,\dots,a_n}(\G_{\on{m}})$ the norm-one subtorus of $R_{a_1,\dots,a_n}(\G_{\on{m}})$. We denote by $N_{a_1,\dots,a_n}$ the norm map from $F_{a_1,\dots,a_n}$ to $F$.
	
	We write $\on{Br}(F)$ for the Brauer group of $F$. If $\on{char}(F)\neq p$ and $F$ contains a primitive $p$-th root of unity, for all $a,b\in F^\times$ we let $(a,b)$ be the corresponding degree-$p$ cyclic algebra and for its class in $\on{Br}(F)$; see \Cref{kummer-sub}. We denote by $N_{a_1,\dots,a_n}\colon\on{Br}(F_{a_1,\dots,a_n})\to \on{Br}(F)$ for the corestriction map along $F_{a_1,\dots,a_n}/F$. 
	
	An $F$-variety is a separated integral $F$-scheme of finite type. If $X$ is an $F$-variety, we denote by $F(X)$ the function field of $X$, and we write $X^{(1)}$ for the collection of all points of codimension $1$ in $X$. We set $X_s\coloneqq X\times_FF_s$. If $K$ is an \'etale algebra over $F$, we write $X_K$ for $X\times_FK$. For all $a_1,\dots,a_n\in F^\times$, we write $X_{a_1,\dots,a_n}$ for $X_{F_{a_1,\dots,a_n}}$. When $X=\P^d_F$ is a $d$-dimensional projective space, we denote by $\P^d_{a_1,\dots,a_n}$ the base change of $\P^d_F$ to $F_{a_1,\dots,a_d}$.

	\section{Preliminaries}\label{section-preliminaries}

	\subsection{Galois algebras and Kummer Theory}\label{kummer-sub}
	
	Let $F$ be a field and let $G$ be a finite group. A $G$\emph{-algebra} is an \'etale $F$-algebra $L$ on which $G$ acts via $F$-algebra automorphisms. The $G$-algebra $L$ is \emph {Galois} if $|G|=\dim_F(L)$ and $L^G=F$; see \cite[Definitions (18.15)]{knus1998book}. A $G$-algebra $L/F$ is Galois if and only if the morphism of schemes $\on{Spec}(L)\to \on{Spec}(F)$ is an \'etale $G$-torsor. If $L/F$ is a Galois $G$-algebra, the group algebra $\Z[G]$ acts on the multiplicative group $L^{\times}$: an element $\Sum_{i=1}^r m_ig_i\in \Z[G]$, where $m_i\in \Z$ and $g_i\in G$, sends $x\in L^{\times}$ to $\Prod_{i=1}^r g_i(x)^{m_i}$. 
	
	By  \cite[Example (28.15)]{knus1998book}, we have a canonical bijection 
	\begin{equation}\label{galois-alg}
		\on{Hom}_{\on{cont}}(\Gamma_F,G)/_{\sim}\xrightarrow{\sim}\set{\text{Isomorphism classes of Galois $G$-algebras over $F$}},
	\end{equation}
	where, if $f_1,f_2\colon \Gamma_F\to G$ are continuous group homomorphisms, we say that $f_1\simeq f_2$ if there exists $g\in G$ such that $gf_1(\sigma)g^{-1}=f_2(\sigma)$ for all $\sigma\in \Gamma_F$.
	
	Let $H$ be a normal subgroup of $G$. Under the correspondence (\ref{galois-alg}), the map $\on{Hom}_{\on{cont}}(\Gamma_F,G)/_{\sim}\to \on{Hom}_{\on{cont}}(\Gamma_F,G/H)/_{\sim}$ sends the class of a Galois $G$-algebra $L$ to the class of the Galois $G/H$-algebra $L^H$.
	
	\begin{lemma}\label{pushout-g-algebras}
		Let $G$ be a finite group, and let $H, H', S$ be normal subgroups of $G$ such that $H\subset S$, $H'\subset S$, and the square
		\begin{equation}\label{g-h-h'-s}
			\begin{tikzcd}
				G \arrow[r] \arrow[d] & G/H \arrow[d] \\
				G/H' \arrow[r] & G/S 
			\end{tikzcd}
		\end{equation}
		is cartesian. 
		
		(1) Let $L$ be a Galois $G$-algebra. Then the tensor product $L^H \otimes_{L^S} L^{H'}$ has a Galois $G$-algebra structure given by $g(x\otimes x')\coloneqq g(x)\otimes g(x')$ for all $x\in L^H$ and $x'\in L^{H'}$. Moreover, the inclusions $L^H\to L$ and $L^{H'}\to L$ induce an isomorphism of Galois $G$-algebras $L^H \otimes_{L^S} L^{H'} \rightarrow L$.
		
		(2) Conversely, let $K$ be a Galois $G/H$-algebra, let $K'$ be a Galois $G/H'$-algebra, and let $E$ be a Galois $G/S$-algebra. Suppose given $G$-equivariant algebra homomorphisms $E \rightarrow K$ and $E \rightarrow K'$. Endow the tensor product $L \coloneqq K \otimes_{E} K'$ with the structure of a $G$-algebra given by $g(x\otimes x')\coloneqq g(x)\otimes g(x')$ for all $x\in K$ and $x'\in K'$. Then $L$ is a Galois $G$-algebra such that $L^H\simeq K$ as $G/H$-algebras, and $L^{H'}\simeq K'$ as $G/H'$-algebras.
	\end{lemma}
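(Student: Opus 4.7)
The plan is to prove (1) by faithfully flat descent along $L/F$, and then to deduce (2) from (1) via the correspondence (\ref{galois-alg}) between Galois $G$-algebras and continuous homomorphisms $\Gamma_F\to G$.

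For part (1), both sides of the proposed map $L^H\otimes_{L^S}L^{H'}\to L$ are finite \'etale $G$-algebras over $F$, so it suffices to check that the map becomes an isomorphism after the faithfully flat base change $F\to L$. Since $L/F$ is Galois with group $G$, for any subgroup $U\subset G$ there is a canonical $G$-equivariant identification $L^U\otimes_F L\cong \on{Maps}(G/U,L)$ coming from the fact that $L/F$ splits all sub-$G$-torsors of $\on{Spec}(L)$. Under these identifications and the natural isomorphism $\on{Maps}(X,L)\otimes_{\on{Maps}(Y,L)}\on{Maps}(Z,L)\cong\on{Maps}(X\times_Y Z,L)$ for finite $G$-sets $X,Y,Z$, the map in question becomes
\[
\on{Maps}(G/H\times_{G/S}G/H',L)\longrightarrow \on{Maps}(G,L).
\]
The cartesian hypothesis on (\ref{g-h-h'-s}) says precisely that $G\to G/H\times_{G/S}G/H'$ is a bijection of $G$-sets, so the map above is an isomorphism, and $G$-equivariance is clear from the formula defining the action.

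For part (2), the correspondence (\ref{galois-alg}) allows us to replace $E,K,K'$ by continuous homomorphisms $\phi_S\colon\Gamma_F\to G/S$, $\phi_H\colon\Gamma_F\to G/H$, $\phi_{H'}\colon\Gamma_F\to G/H'$, determined up to conjugation. The existence of $G$-equivariant $F$-algebra maps $E\to K$ and $E\to K'$ translates, under (\ref{galois-alg}), into the compatibilities $\phi_S=\pi_H\circ\phi_H=\pi_{H'}\circ\phi_{H'}$, where $\pi_H\colon G/H\to G/S$ and $\pi_{H'}\colon G/H'\to G/S$ are the canonical projections. By the cartesian property of (\ref{g-h-h'-s}), the pair $(\phi_H,\phi_{H'})$ lifts (uniquely up to conjugation) to a homomorphism $\phi\colon\Gamma_F\to G$. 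Let $M$ be the Galois $G$-algebra corresponding to $\phi$. Functoriality of (\ref{galois-alg}) with respect to the quotient maps gives $G$-equivariant isomorphisms $M^H\cong K$ and $M^{H'}\cong K'$ that are compatible over $M^S\cong E$. Applying part (1) to $M$ yields a $G$-equivariant isomorphism $L=K\otimes_E K'\cong M^H\otimes_{M^S}M^{H'}\cong M$, so $L$ is a Galois $G$-algebra whose $H$- and $H'$-fixed subalgebras recover $K$ and $K'$, respectively.

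The main technical point is the bookkeeping in part (2): one must check that the $G$-equivariant identifications $M^H\cong K$, $M^{H'}\cong K'$, $M^S\cong E$ produced by the correspondence can be chosen compatibly, so that the resulting map $K\otimes_E K'\to M$ is well-defined and $G$-equivariant. This is a routine consequence of the naturality of (\ref{galois-alg}) under passage to fixed subalgebras, but it is the place where care is required. The combinatorial core of the argument, namely the bijection $G\cong G/H\times_{G/S}G/H'$ as $G$-sets, is a direct unwinding of the cartesian hypothesis.
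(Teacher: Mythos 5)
Your part (1) is correct and is essentially the paper's own argument: the paper base-changes the square of schemes $\Spec(L)\to \Spec(L)/H'$, $\Spec(L)/H\to\Spec(L)/S$ to a separable closure and reduces to the cartesian square (\ref{g-h-h'-s}) of $G$-sets, which is the same computation you perform after the faithfully flat base change $F\to L$.

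Part (2), however, has a genuine gap, located exactly at the point you flag and then dismiss as routine. The pair $(\phi_H,\phi_{H'})$ does \emph{not} lift uniquely up to conjugation: the compatibility $\pi_H\circ\phi_H\sim\pi_{H'}\circ\phi_{H'}$ holds only up to conjugacy, and the different ways of rigidifying it (conjugating $\phi_{H'}$ by lifts of the various elements of $C_{G/S}(\on{im}\phi_S)$) produce homomorphisms $\phi\colon\Gamma_F\to G=G/H\times_{G/S}G/H'$ that need not be conjugate in $G$; the corresponding Galois $G$-algebras $M$ are in general pairwise non-isomorphic even though each satisfies $M^H\simeq K$, $M^{H'}\simeq K'$, $M^S\simeq E$. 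For an arbitrary choice of lift, the identifications $M^H\simeq K$ and $M^{H'}\simeq K'$ cannot in general be chosen to restrict to the same isomorphism $M^S\simeq E$ compatibly with the given maps $E\to K$ and $E\to K'$: arranging this amounts to extending an automorphism of $M^S$ as a Galois $G/S$-algebra to one of $M^{H'}$, i.e.\ to the surjectivity of $C_{G/H'}(\on{im}\phi_{H'})\to C_{G/S}(\on{im}\phi_S)$, which fails already when $G/H'$ is a quaternion group surjecting onto $G/S\simeq(\Z/2)^2$. Thus your map $K\otimes_EK'\to M$ is simply undefined for a wrong choice of $M$, and singling out the correct lift essentially presupposes the conclusion that $L$ is Galois. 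The paper sidesteps all of this by arguing with the fiber product of schemes over $F_s$: there the two identifications of $\Spec(E)_s$ with $G/S$ differ by a right translation, which is absorbed by right translations of $G/H$ and $G/H'$ (these are $G$-set automorphisms because $H$ and $H'$ are normal, and they need not be $\Gamma_F$-equivariant — precisely the flexibility your argument at the level of homomorphisms $\Gamma_F\to G$ does not have). To repair your proof, replace the passage through (\ref{galois-alg}) by this direct computation of $\Spec(K)_s\times_{\Spec(E)_s}\Spec(K')_s$ as a $G$-set.
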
 
	The condition that (\ref{g-h-h'-s}) is cartesian is equivalent to $H\cap H'= \set{1}$ and $S=HH'$.
	
	\begin{proof}
		(1) It is clear that the formula $g(x\otimes x')\coloneqq g(x)\otimes g(x')$ makes $L^H \otimes_{L^S} L^{H'}$ into a $G$-algebra. Consider the commutative square of $F$-schemes
		\[
		\begin{tikzcd}
			\Spec(L) \arrow[r] \arrow[d] & \Spec(L)/H' \arrow[d] \\
			\Spec(L)/H \arrow[r] & \Spec(L)/S.
		\end{tikzcd}
		\]
		After base change to a separable closure of $F$, this square becomes the cartesian square (\ref{g-h-h'-s}), and therefore it is cartesian. Passing to coordinate rings, we deduce that the map $L^H \otimes_{L^S} L^{H'} \rightarrow L$ is an isomorphism of $G$-algebras. In particular, since $L$ is a Galois $G$-algebra, so is $L^H \otimes_{L^S} L^{H'}$. 
		
		(2) We have a $G$-equivariant cartesian diagram
		\[
		\begin{tikzcd}
			\Spec(L) \arrow[r]\arrow[d]  & \Spec(K')\arrow[d] \\
			\Spec(K) \arrow[r] & \Spec(E).
		\end{tikzcd}
		\]
		Every $G$-equivariant morphism between $G/H$ and $G/S$ is isomorphic to the projection map $G/H\to G/S$. Therefore the base change of $\Spec(K) \to \Spec(E)$ to $F_s$ is $G$-equivariantly isomorphic to the projection $G/H \to G/S$. Similarly for $\Spec(K')\to\Spec(E)$. Therefore the base change of $\Spec(L)\to \Spec(F)$ over $F_s$ is $G$-equivariantly isomorphic to $(G/H)\times_{G/S}(G/H')\simeq G$, that is, the morphism $\Spec(L)\to \Spec(F)$ is an \'etale $G$-torsor. 
	\end{proof}
	
	Suppose that $\on{char}(F)\neq p$ and that $F$ contains a primitive $p$-th root of unity. We fix a primitive $p$-th root of unity $\zeta\in F^\times$. This determines an isomorphism of Galois modules $\Z/p\Z \simeq \mu_p$, given by $1\mapsto\zeta$, and so the Kummer sequence yields an isomorphism
	\begin{equation}\label{kummer}\on{Hom}_{\on{cont}}(\Gamma_F,\Z/p\Z)=H^1(F,\Z/p\Z )\simeq H^1(F,\mu_p)\simeq F^{\times}/F^{\times p}.\end{equation}
	
	For every $a\in F^{\times}$, we let $\chi_a\colon \Gamma_F\to \Z/p\Z$ be the homomorphism corresponding to the coset $a F^{\times p}$ under (\ref{kummer}). Explicitly, letting $a'\in F_{\on{sep}}^\times$ be such that $(a')^p=a$, we have $g(a')=\zeta^{\chi_a(g)}a'$ for all $g\in \Gamma_F$. This definition does not depend on the choice of $a'$.
	
	Now let $n\geq 1$ be an integer. For all $i=1,\dots,n$, let $\sigma_i$ be the canonical generator of the $i$-th factor $\Z/p\Z$ of $(\Z/p\Z)^n$. By (\ref{kummer}) all Galois $(\Z/p\Z)^n$-algebras over $F$ are of the form $F_{a_1,\dots,a_n}$, where $a_1,\dots,a_n\in F^\times$ and the Galois $(\Z/p\Z)^n$-algebra structure is defined by $(\sigma_i-1)a_i^{1/p}=\zeta$ for all $i$ and $(\sigma_i-1)a_j^{1/p}=1$ for all $j\neq i$.

	We write $(a,b)$ for the cyclic degree-$p$ central simple algebra over $F$ generated, as an $F$-algebra, by $F_a$ and an element $y$ such that
	\[y^p=b,\qquad ty=y\sigma_a(t)\ \text{for all $t\in F_a$}.\]
	We also write $(a,b)$ for the class of $(a,b)$ in $\on{Br}(F)$.
	The Kummer sequence yields a group isomorphism \[\iota\colon H^2(F,\Z/p\Z)\xrightarrow{\sim}\on{Br}(F)[p].\] 
	For all $a,b\in F^{\times}$, we have $\iota(\chi_a\cup \chi_b)=(a,b)$ in $\on{Br}(F)$; see \cite[Chapter XIV, Proposition 5]{serre1979local}. 
	
	\begin{lemma}\label{cup-norm}
		Let $p$ be a prime, and let $F$ be a field of characteristic different from $p$ and containing a primitive $p$-th root of unity $\zeta$. The following are equivalent:
		
		(i) $(a,b)=0$ in $\on{Br}(F)$;
		
		(ii) there exists $\alpha\in F_a^\times$ such that $b=N_a(\alpha)$;
		
		(iii) there exists $\beta\in F_b^\times$ such that $a=N_b(\beta)$.
	\end{lemma}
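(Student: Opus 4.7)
The plan is to establish $(i) \Leftrightarrow (ii)$ and then deduce $(i) \Leftrightarrow (iii)$ by the symmetry $(a,b) = -(b,a)$ in $\on{Br}(F)[p]$. That symmetry is immediate from the anticommutativity of cup products in degree one together with the identification $\iota(\chi_a \cup \chi_b) = (a,b)$ recalled just before the lemma: indeed $\chi_a \cup \chi_b = -\chi_b \cup \chi_a$ in $H^2(F,\Z/p\Z)$, so $(a,b)=0$ if and only if $(b,a)=0$, and applying $(i) \Leftrightarrow (ii)$ to the pair $(b,a)$ in place of $(a,b)$ yields $(i) \Leftrightarrow (iii)$ without further work.

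For the direction $(ii) \Rightarrow (i)$ I would argue directly in the explicit presentation of $(a,b)$ given in the text. Given $\alpha \in F_a^\times$ with $b = N_a(\alpha) = \prod_{i=0}^{p-1} \sigma_a^i(\alpha)$, set $z \coloneqq \alpha^{-1} y$. Iterating the commutation rule $y t = \sigma_a(t) y$ gives
\[
z^k \;=\; \Bigl(\prod_{i=0}^{k-1} \sigma_a^i(\alpha^{-1})\Bigr)\, y^k,
\]
so at $k = p$ one obtains $z^p = N_a(\alpha)^{-1} b = 1$. Since $(a,b)$ has $F$-dimension $p^2$ while $F_a$ has dimension $p$, the generator $y$ lies outside $F_a$, and hence $z$ is non-central. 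Therefore the minimal polynomial of $z$ is a nontrivial proper divisor of $X^p - 1 = \prod_{i=0}^{p-1}(X - \zeta^i) \in F[X]$; the resulting factorization of $0 = z^p - 1$ in $(a,b)$ produces zero divisors, forcing $(a,b)$ to be split. (The degenerate case where $a$ is a $p$-th power is handled trivially, since then $F_a \cong F^{\, p}$ and any $b$ is a norm, while $(a,b)=0$.)

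For $(i) \Rightarrow (ii)$ the cleanest route is cohomological. Using $\iota(\chi_a \cup \chi_b) = (a,b)$, the vanishing $(a,b) = 0$ becomes $\chi_a \cup \chi_b = 0$ in $H^2(F,\Z/p\Z)$. Restricting to the cyclic extension $F_a/F$ and using periodicity of group cohomology for $\on{Gal}(F_a/F) = \ang{\sigma_a}$ together with Hilbert 90, which gives $H^1(F_a/F, F_a^\times) = 0$, one obtains the classical exact sequence
\[
F_a^\times \xrightarrow{\,N_a\,} F^\times \longrightarrow \on{Br}(F_a/F) \longrightarrow 0,
\]
in which the second arrow sends $b \in F^\times$ to $(a,b)$. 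Hence $(a,b) = 0$ forces $b \in N_a(F_a^\times)$, which is $(ii)$. The main obstacle is precisely this last step, whose input is the standard but nontrivial machinery of cyclic group cohomology and Hilbert 90; everything else in the proof---the direct computation for $(ii) \Rightarrow (i)$, the symmetry $(a,b) = -(b,a)$, and the deduction of $(i) \Leftrightarrow (iii)$ from $(i) \Leftrightarrow (ii)$---is purely formal.
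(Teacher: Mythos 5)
Your proposal is correct. The paper does not actually prove this lemma; it simply cites Serre, \emph{Local Fields}, Chapter XIV, Proposition 4(iii), so what you have written is in effect the standard proof of the cited result. Each of your three steps is sound: the reduction of (iii) to (ii) via graded-commutativity of the cup product and the identification $\iota(\chi_a\cup\chi_b)=(a,b)$; the direct computation $z^p=N_a(\alpha)^{-1}b=1$ for $z=\alpha^{-1}y$, which produces a non-central element annihilated by the split polynomial $X^p-1$ and hence zero divisors, so that the degree-$p$ algebra $(a,b)$ cannot be a division algebra and must be split; and the identification $\on{Br}(F_a/F)\simeq F^\times/N_a(F_a^\times)$ from periodicity of cohomology of the cyclic group $\ang{\sigma_a}$ plus Hilbert 90, sending $b$ to the class of $(a,b)$. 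Two minor points: your commutation rule $yt=\sigma_a(t)y$ is the inverse of the paper's convention $ty=y\sigma_a(t)$, which only replaces $\sigma_a$ by $\sigma_a^{-1}$ in the intermediate products and does not affect the norm, so the computation survives; and the degenerate case where $F_a$ (or $F_b$) is not a field needs to be invoked not only in the direction $(ii)\Rightarrow(i)$ but also in $(i)\Rightarrow(ii)$, since the cyclic-cohomology exact sequence presupposes that $F_a/F$ is a field extension --- as you note, in that case the norm from $F^{\,p}$ to $F$ is surjective and $(a,b)$ is split, so all three statements hold trivially.
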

	
	\begin{proof}
		See \cite[Chapter XIV, Proposition 4(iii)]{serre1979local}.
	\end{proof}

	\subsection{Formality and Massey products}\label{massey-section}
	
	Let $(A,\partial)$ be a differential graded ring, i.e, $A=\oplus_{i\geq 0}A^i$ is a non-negatively graded abelian group with an associative multiplication which respects the grading, and $\partial\colon A\to A$ is a group homomorphism of degree $1$ such that $\partial\circ \partial=0$ and $\partial(ab)=\partial(a)b+(-1)^ia\partial(b)$ for all $i\geq 0$, $a\in A^i$ and $b\in A$. We denote by $H^{\smallbullet}(A)\coloneqq \on{Ker}(\partial)/\on{Im}(\partial)$ the cohomology of $(A,\partial)$, and we write $\cup$ for the multiplication (cup product) on $H^{\smallbullet}(A)$.
	
	We say that $A$ is \emph{formal} if it is quasi-isomorphic, as a differential graded ring, to $H^{\smallbullet}(A)$ with the zero differential.

	Let $n\geq 2$ be an integer and $a_1,\dots,a_n\in H^1(A)$. A \emph{defining system} for the $n$-th order Massey product $\ang{a_1,\dots,a_n}$ is a collection $M$ of elements of $a_{ij}\in A^1$, where $1\leq i<j\leq n+1$, $(i,j)\neq (1,n+1)$, such that
	\begin{enumerate}
		\item $\partial(a_{i,i+1})=0$ and $a_{i,i+1}$ represents $a_i$ in $H^1(A)$, and
		\item $\partial(a_{ij})=-\Sum_{l=i+1}^{j-1}a_{il}a_{lj}$ for all $i<j-1$.
	\end{enumerate}
	
	It follows from (2) that  $-\Sum_{l=2}^na_{1l}a_{l,n+1}$ is a $2$-cocycle: we write $\ang{a_1,\dots,a_n}_M$ for its cohomology class in $H^2(A)$, called the \emph{value} of $\ang{a_1,\dots,a_n}$ corresponding to $M$. By definition, the \emph{Massey product} of $a_1,\dots,a_n$ is the subset $\ang{a_1,\dots,a_n}$ of $H^2(A)$ consisting of the values $\ang{a_1,\dots,a_n}_M$ of all defining systems $M$. We say that the Massey product $\ang{a_1,\dots,a_n}$ is \emph{defined} if it is non-empty, and that it \emph{vanishes} if $0\in \ang{a_1,\dots,a_n}$.
	
	\begin{lemma}\label{formal-defined}
		Let $(A,\partial)$ be a differential graded ring, and let $\alpha_1,\alpha_2,\alpha_3,\alpha_4$ be elements of $H^1(A)$ satisfying $\alpha_1\cup\alpha_2=\alpha_2\cup\alpha_3=\alpha_3\cup\alpha_4=0$. If $A$ is formal, then $\ang{\alpha_1,\alpha_2,\alpha_3,\alpha_4}$ is defined.
	\end{lemma}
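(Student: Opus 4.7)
The plan is to use formality to reduce the problem to the cohomology ring $H^\bullet(A)$ equipped with the zero differential, where an explicit defining system can be written down, and then transfer that defining system back to $A$ along a zig-zag of dg-ring quasi-isomorphisms.

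In $(H^\bullet(A),0)$ I would exhibit a defining system $\bar M$ for $\langle \alpha_1,\alpha_2,\alpha_3,\alpha_4\rangle$ by setting $\bar a_{i,i+1}\coloneqq \alpha_i$ for $i=1,2,3,4$ and $\bar a_{ij}\coloneqq 0$ for $j>i+1$. Condition (1) is automatic because the differential vanishes. Condition (2) reduces to $\sum_{l=i+1}^{j-1}\bar a_{il}\bar a_{lj}=0$: for $j-i=2$ the sum equals $\alpha_i\cup\alpha_{i+1}$, which vanishes by hypothesis; for $j-i=3$ every summand contains a zero factor. Thus $\langle \alpha_1,\alpha_2,\alpha_3,\alpha_4\rangle$ is defined (in fact contains $0$) in $H^\bullet(A)$.

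By formality there is a zig-zag of dg-ring quasi-isomorphisms connecting $A$ with $(H^\bullet(A),0)$. A defining system pushes forward along any dg-ring homomorphism by applying it entrywise, so the only nontrivial step is to lift a defining system $\{b_{ij}\}$ along a dg-ring quasi-isomorphism $\varphi\colon C\to B$. I would proceed by induction on $j-i$: pick cocycles $c_{i,i+1}\in C^1$ whose cohomology classes are preimages under $\varphi_*$ of $[b_{i,i+1}]$; then at each later stage the element $\sigma_{ij}\coloneqq -\sum_{l=i+1}^{j-1}c_{il}c_{lj}$ is a cocycle (a direct Leibniz-rule check using the defining-system relations at shorter lengths) whose image in $B$ is a coboundary, so, because $\varphi$ is a quasi-isomorphism, $\sigma_{ij}$ is itself a coboundary in $C$, and one may choose $c_{ij}\in C^1$ with $\partial c_{ij}=\sigma_{ij}$.

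The main obstacle is this lifting step: at each stage one must verify that $\varphi(\sigma_{ij})$ truly is a coboundary in $B$, which requires careful bookkeeping of the discrepancies $\varphi(c_{ik})-b_{ik}$ through the defining-system relations at shorter lengths. For $n=4$ the induction needs to be carried out only for $j-i\in\{2,3\}$, since the entry $a_{1,5}$ is not part of a defining system; iterating the lifting through each arrow of the zig-zag from $(H^\bullet(A),0)$ back to $A$ then produces the required defining system in $A$.
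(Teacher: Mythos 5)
Your overall strategy is the standard one and is surely what underlies the result: the paper itself does not write out a proof but simply cites \cite[Lemma B.1]{merkurjev2022degenerate} and remarks that the argument for $\F_2$-algebras carries over to arbitrary differential graded rings. Your first half is complete and correct: in $(H^{\smallbullet}(A),0)$ the collection $\bar a_{i,i+1}=\alpha_i$, $\bar a_{ij}=0$ for $j>i+1$ is a genuine defining system, since condition (2) reduces to $\alpha_i\cup\alpha_{i+1}=0$ when $j-i=2$ and is trivially satisfied when $j-i=3$. (One small point you should make explicit: a zig-zag of quasi-isomorphisms between $A$ and $(H^{\smallbullet}(A),0)$ induces a ring automorphism of $H^{\smallbullet}(A)$ that need not be the identity, so what you actually transport is a defining system for the images of the $\alpha_i$ under that automorphism; this is harmless because the hypothesis $\alpha_i\cup\alpha_{i+1}=0$ is preserved by ring automorphisms.)

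The genuine gap is in the lifting step, and it is not merely a matter of unperformed bookkeeping: the induction as you state it is false without an extra correction. Suppose $\varphi\colon C\to B$ is a quasi-isomorphism and $\{b_{ij}\}$ is a defining system in $B$. If at stage $j-i=2$ you choose $c_{i,i+2}$ to be \emph{any} primitive of $\sigma_{i,i+2}=-c_{i,i+1}c_{i+1,i+2}$, then $\varphi(c_{i,i+2})-b_{i,i+2}$ is, up to an explicit term built from the homotopies $e_{k,k+1}$ with $\varphi(c_{k,k+1})=b_{k,k+1}+\partial e_{k,k+1}$, an \emph{arbitrary} $1$-cocycle $z$ of $B$. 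At stage $j-i=3$ the element $\varphi(\sigma_{1,4})$ then contains terms such as $-z\,b_{34}$, whose class $[z]\cup[b_{34}]$ in $H^2(B)$ need not vanish; so $\varphi(\sigma_{1,4})$ need not be a coboundary and your appeal to the quasi-isomorphism property breaks down. The fix is to strengthen the inductive hypothesis: use the surjectivity of $\varphi_*$ on $H^1$ to replace $c_{i,i+2}$ by $c_{i,i+2}-z'$ for a cocycle $z'\in C^1$ with $\varphi(z')$ cohomologous to $z$, so that $\varphi(c_{i,i+2})-b_{i,i+2}$ is an explicitly controlled expression in the $e_{k,k+1}$ and their differentials plus a coboundary; with that control one verifies by a Leibniz computation that $\varphi(\sigma_{1,4})-\partial b_{1,4}$ is exact, and the argument closes. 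Since you only need $j-i\in\{2,3\}$, this is a finite computation, but it is exactly the content of the lemma and must be written out (or replaced by a citation to the invariance of Massey products under quasi-isomorphisms of differential graded rings).
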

	
	\begin{proof}
		This was proved in \cite[Lemma B.1]{merkurjev2022degenerate} under the assumption that $A$ is a differential graded $\F_2$-algebra. The proof for an arbitrary differential graded ring remains the same.
	\end{proof}
	In fact, one could prove the following: If the differential graded ring $A$ is formal, then for all $n\geq 3$ and all $\alpha_1,\dots,\alpha_n\in H^1(A)$ such that $\alpha_i\cup\alpha_{i+1}=0$ for all $1\leq i\leq n-1$, then $\ang{\alpha_1,\dots,\alpha_n}$ vanishes.
	
	\subsection{Dwyer's Theorem}\label{dwyer-section}
	Let $p$ be a prime, and let $U_{n+1}\subset \on{GL}_{n+1}(\F_p)$ be the subgroup of $(n+1)\times (n+1)$ upper unitriangular matrices. For all $1\leq i<j\leq n+1$, we denote by $e_{ij}$ the matrix whose non-diagonal entries are all zero except for the entry $(i,j)$, which is equal to $1$. We set $\sigma_i\coloneqq e_{i,i+1}$ for all $1\leq i\leq n$. By \cite[Theorem 1]{biss2001presentation}, the group $U_{n+1}$ admits a presentation with generators the $\sigma_i$ and relations:
	\begin{equation}\label{relation0}
		\sigma_i^p=1\qquad\text{for all $1\leq i\leq n$,}
	\end{equation}
	\begin{equation}\label{relation1}
		[\sigma_i,\sigma_j]=1\qquad\text{for all $1\leq i\leq j-2\leq n-2$,}
	\end{equation}
	\begin{equation}\label{relation2}
		[\sigma_i,[\sigma_i,\sigma_{i+1}]]=[\sigma_{i+1},[\sigma_i,\sigma_{i+1}]]\qquad\text{for all $1\leq i\leq n-2$,}
	\end{equation}
	\begin{equation}\label{relation3}
		[[\sigma_i,\sigma_{i+1}],[\sigma_{i+1},\sigma_{i+2}]]=1\qquad\text{for all $1\leq i\leq n-3$.}
	\end{equation}

	The following relation holds in $U_{n+1}$:
	\[
	[e_{ij},e_{jk}]=e_{ik}\qquad\text{for all $1\leq i< j<k\leq n+1$.}
	\]
	By induction, we deduce that
	\[e_{1,n+1}=[\sigma_1,[\sigma_2,\dots,[\sigma_{n-2},[\sigma_{n-1},\sigma_n]]\dots]].\]
	
	The center $Z_{n+1}$ of $U_{n+1}$ is the subgroup generated by $e_{1,n+1}$. The factor group $\cl{U}_{n+1}\coloneqq U_{n+1}/Z_{n+1}$ may be identified with the group of all $(n+1)\times (n+1)$ upper unitriangular matrices with entry $(1,n+1)$ omitted. For all $1\leq i<j\leq n+1$, let $\cl{e}_{ij}$ be the coset of $e_{ij}$ in $\cl{U}_{n+1}$, and set $\cl{\sigma}_i\coloneqq \cl{e}_{i,i+1}$ for all $1\leq i\leq n$. Then $\cl{U}_{n+1}$ is generated by all the $\cl{e}_{ij}$ modulo the relations
	\begin{equation}\label{relation0-bar}
		\cl{\sigma}_i^p=1\qquad\text{for all $1\leq i\leq n$,}
	\end{equation}
	\begin{equation}\label{relation1-bar}
		[\cl{\sigma}_i,\cl{\sigma}_j]=1\qquad\text{for all $1\leq i\leq j-2\leq n-2$,}
	\end{equation}
	\begin{equation}\label{relation2-bar}
		[\cl{\sigma}_i,[\cl{\sigma}_i,\cl{\sigma}_{i+1}]=[\cl{\sigma}_{i+1},[\cl{\sigma}_i,\cl{\sigma}_{i+1}]]\qquad\text{for all $1\leq i\leq n-2$,}
	\end{equation}
	\begin{equation}\label{relation3-bar}
		[[\cl{\sigma}_i,\cl{\sigma}_{i+1}],[\cl{\sigma}_{i+1},\cl{\sigma}_{i+2}]]=1\qquad\text{for all $1\leq i\leq n-3$.}
	\end{equation}
	\begin{equation}\label{relation4-bar}
		[\cl{\sigma}_1,[\cl{\sigma}_2,\dots,[\cl{\sigma}_{n-2},[\cl{\sigma}_{n-1},\cl{\sigma}_n]]\dots]]=1.
	\end{equation}
	
	We write $u_{ij}\colon U_{n+1}\to \Z/p\Z$ for the $(i,j)$-th coordinate function on $U_{n+1}$. Note that $u_{ij}$ is not a group homomorphism unless $j=i+1$. We have commutative diagram
	\begin{equation}\label{central-exact-seq}
		\begin{tikzcd}
			1 \arrow[r] & Z_{n+1}\arrow[r]\arrow[r] & U_{n+1}\arrow[r]\arrow[dr] & \cl{U}_{n+1}\arrow[r]\arrow[d] & 1\\
			&&& (\Z/p\Z)^n
		\end{tikzcd}
	\end{equation}
	where the row is a central exact sequence and the homomorphism $U_{n+1}\to (\Z/p\Z)^n$ is given by $(u_{12},u_{23},\dots, u_{n,n+1})$. We also let 
	\[Q_{n+1}\coloneqq \on{Ker}[U_{n+1}\to (\Z/p\Z)^n],\quad \cl{Q}_{n+1}\coloneqq \on{Ker}[\cl{U}_{n+1}\to (\Z/p\Z)^n]=Q_{n+1}/Z_{n+1}.\]
	Note that $Z_{n+1}\subset Q_{n+1}$, with equality when $n=2$. 
	
	Let $G$ be a profinite group. The complex $(C^{\smallbullet}(G,\Z/p\Z),\partial)$ of mod $p$ non-ho\-mo\-ge\-neous continuous cochains of $G$ with the standard cup product is a differential graded ring. Therefore $H^{\smallbullet}(G,\Z/p\Z)=H^{\smallbullet}(C^{\smallbullet}(G,\Z/p\Z),\partial)$ is endowed with Massey products. The following theorem is due to Dwyer \cite{dwyer1975homology}.
	
	\begin{thm}[Dwyer]\label{dwyer}
		Let $p$ be a prime number, let $G$ be a profinite group, let $\chi_1,\dots,\chi_n\in H^1(G,\Z/p\Z)$, and write $\chi\colon G\to (\Z/p\Z)^n$ for the continuous homomorphism with components $(\chi_1,\dots,\chi_n)$. Consider (\ref{central-exact-seq}).
		
		(1) The Massey product $\ang{\chi_1,\dots,\chi_n}$ is defined if and only if $\chi$ lifts to a continuous homomorphism $G\to \cl{U}_{n+1}$. 
		
		(2) The Massey product $\ang{\chi_1,\dots,\chi_n}$ vanishes if and only if $\chi$ lifts to a continuous homomorphism $G\to U_{n+1}$. 
	\end{thm}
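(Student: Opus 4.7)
The plan is to set up a direct dictionary between defining systems for $\ang{\chi_1,\dots,\chi_n}$ and continuous lifts of $\chi$ along the projections in (\ref{central-exact-seq}), by reading off the matrix entries of a lift as the $1$-cochains of a defining system.

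Concretely, any continuous map $\rho\colon G\to U_{n+1}$ can be written uniquely as $\rho(g)=I+\sum_{i<j}a_{ij}(g)e_{ij}$ with continuous $a_{ij}\colon G\to\Z/p\Z$. Using $e_{ij}e_{kl}=\delta_{jk}e_{il}$ and expanding $\rho(g)\rho(h)$, the homomorphism identity $\rho(gh)=\rho(g)\rho(h)$ becomes
\[a_{ij}(gh)=a_{ij}(g)+a_{ij}(h)+\sum_{i<k<j}a_{ik}(g)a_{kj}(h)\qquad (1\leq i<j\leq n+1),\]
which, with the standard conventions $\partial f(g,h)=f(g)+f(h)-f(gh)$ and $(f\cup g)(g,h)=f(g)g(h)$, is precisely the system $\partial a_{ij}=-\sum_{i<k<j}a_{ik}\cup a_{kj}$ for all $i<j$. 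In particular each superdiagonal entry $a_{i,i+1}$ is a $1$-cocycle, hence a continuous homomorphism $G\to\Z/p\Z$; and since the composite $U_{n+1}\to(\Z/p\Z)^n$ in (\ref{central-exact-seq}) just reads off the superdiagonal, $\rho$ lifts $\chi$ if and only if $a_{i,i+1}=\chi_i$ for all $i$. The identical analysis, with the index $(1,n+1)$ simply omitted, applies to $\cl{U}_{n+1}$.

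Part (1) is then immediate: a continuous lift $G\to\cl{U}_{n+1}$ of $\chi$ is the same datum as a family $\set{a_{ij}}_{(i,j)\neq (1,n+1)}$ of continuous $1$-cochains satisfying the conditions of \Cref{massey-section}, i.e.\ a defining system for $\ang{\chi_1,\dots,\chi_n}$. For part (2), if $\chi$ lifts to $\rho\colon G\to U_{n+1}$ then dropping the $(1,n+1)$-entry produces a defining system $M$, and the equation $\partial a_{1,n+1}=-\sum_{l=2}^{n}a_{1l}\cup a_{l,n+1}$ exhibits $\ang{\chi_1,\dots,\chi_n}_M$ as a coboundary, so the Massey product vanishes. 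Conversely, given a defining system $M=\set{a_{ij}}$ whose value is zero in $H^2$, one chooses a continuous $1$-cochain $a_{1,n+1}$ with $\partial a_{1,n+1}=-\sum_l a_{1l}\cup a_{l,n+1}$; the enlarged family then assembles into a continuous lift $G\to U_{n+1}$ of $\chi$.

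The essence of the argument is the matrix multiplication computation in $U_{n+1}$; the only real care needed is bookkeeping with signs and indices so that the cochain identities match the defining-system formulas verbatim. Continuity is automatic throughout because $C^{\smallbullet}(G,\Z/p\Z)$ consists of continuous cochains by definition, and the reconstructed lift is polynomial in finitely many of its $a_{ij}$'s.
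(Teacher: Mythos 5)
Your argument is correct and complete: the identification of the homomorphism condition $\rho(gh)=\rho(g)\rho(h)$ with the system $\partial a_{ij}=-\sum_{i<k<j}a_{ik}\cup a_{kj}$ via $e_{ij}e_{kl}=\delta_{jk}e_{il}$ is exactly the standard dictionary between unitriangular lifts and defining systems, and your handling of the $(1,n+1)$ entry in the two directions of part (2) is right. The paper itself does not prove this theorem; it only cites Dwyer's original article and, for the profinite setting, Efrat and Harpaz--Wittenberg, where essentially your computation appears. The only points worth flagging are cosmetic: you reuse the letter $g$ both for a group element and for a cochain in the cup-product formula, and for odd $p$ one should state once and for all the sign convention for $\partial$ on $1$-cochains (yours, $\partial f(g,h)=f(g)+f(h)-f(gh)$, is consistent with the paper's defining-system condition, so the bookkeeping goes through as you claim).
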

	
	\begin{proof}
		See \cite{dwyer1975homology} for Dwyer's original proof in the setting of abstract groups, and \cite{efrat2014zassenhaus} or \cite[Proposition 2.2]{harpaz2019massey} for the statement in the case of profinite groups.
	\end{proof}

	\Cref{dwyer} may be rephrased as follows.
	
	\begin{cor}\label{dwyer-cor}
		Let $p$ be a prime, $F$ be a field of characteristic different from $p$ and containing a primitive $p$-th root of unity $\zeta$, and let $a_1,\dots,a_n\in F^\times$. The Massey product $\ang{a_1,\dots,a_n}\subset H^2(F,\Z/p\Z)$ is defined (resp. vanishes) if and only if there exists a Galois $\cl{U}_{n+1}$-algebra $K/F$ (resp. a Galois $U_{n+1}$-algebra $L/F$)  such that $K^{\cl{Q}_{n+1}}\simeq F_{a_1,\dots,a_n}$ (resp. $L^{{Q}_{n+1}}\simeq F_{a_1,\dots,a_n}$) as $(\Z/p\Z)^n$-algebras.
	\end{cor}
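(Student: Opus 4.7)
The plan is to deduce the corollary directly from Dwyer's \Cref{dwyer} by transporting the statements about lifts of continuous homomorphisms through the bijection (\ref{galois-alg}) between conjugacy classes of continuous homomorphisms and isomorphism classes of Galois algebras.

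First, I would observe that by Kummer theory (via the normalization $1 \mapsto \zeta$) the characters $\chi_{a_i}\colon\Gamma_F\to\Z/p\Z$ assemble into a continuous homomorphism $\chi = (\chi_{a_1},\dots,\chi_{a_n})\colon \Gamma_F \to (\Z/p\Z)^n$, and that under (\ref{galois-alg}) the conjugacy class of $\chi$ corresponds precisely to the Galois $(\Z/p\Z)^n$-algebra $F_{a_1,\dots,a_n}$, equipped with the $(\Z/p\Z)^n$-action described after (\ref{kummer}). This is exactly the classification of Kummer-type Galois algebras spelled out in \Cref{kummer-sub}.

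Next, apply \Cref{dwyer}(1): $\langle a_1,\dots,a_n\rangle$ is defined if and only if $\chi$ lifts along $\bar{U}_{n+1}\to(\Z/p\Z)^n$ to a continuous homomorphism $\tilde{\chi}\colon\Gamma_F\to\bar{U}_{n+1}$. By (\ref{galois-alg}), conjugacy classes of such $\tilde{\chi}$ are in bijection with isomorphism classes of Galois $\bar{U}_{n+1}$-algebras $K/F$. Moreover, under this correspondence, post-composing $\tilde{\chi}$ with the quotient $\bar{U}_{n+1}\to \bar{U}_{n+1}/\bar{Q}_{n+1}=(\Z/p\Z)^n$ corresponds to passing from $K$ to its subalgebra $K^{\bar{Q}_{n+1}}$, by the functoriality statement recorded just after (\ref{galois-alg}). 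Hence a lift $\tilde{\chi}$ of $\chi$ exists if and only if there exists a Galois $\bar{U}_{n+1}$-algebra $K$ such that $K^{\bar{Q}_{n+1}}\simeq F_{a_1,\dots,a_n}$ as $(\Z/p\Z)^n$-algebras, which is the claim for ``defined''. The argument for ``vanishes'' is identical, using \Cref{dwyer}(2), the central extension (\ref{central-exact-seq}), and the subgroup $Q_{n+1}\subset U_{n+1}$ in place of $\bar{Q}_{n+1}\subset \bar{U}_{n+1}$.

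The only point requiring care is the translation ``lift of $\chi$'' $\leftrightarrow$ ``$K^{\bar{Q}_{n+1}}\simeq F_{a_1,\dots,a_n}$ as $(\Z/p\Z)^n$-algebras'': a priori the bijection (\ref{galois-alg}) identifies the quotient Galois algebra of $K$ only up to the equivalence $\sim$, i.e. up to conjugation by an element $g\in (\Z/p\Z)^n$. However, $(\Z/p\Z)^n$ is abelian, so conjugation is trivial and genuine equality of homomorphisms $\Gamma_F\to(\Z/p\Z)^n$ is equivalent to $\sim$. Thus no issue arises, and the corollary follows with no further work.
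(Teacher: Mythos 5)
Your proposal is correct and is precisely the argument the paper intends: the paper's proof of \Cref{dwyer-cor} simply says it follows from \Cref{dwyer} and the bijection (\ref{galois-alg}), and you have spelled out exactly that deduction, including the functoriality of (\ref{galois-alg}) under quotients and the harmless point that conjugation in the abelian group $(\Z/p\Z)^n$ is trivial. No differences to report.
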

	
	\begin{proof}
		This follows from \Cref{dwyer} and (\ref{galois-alg}).
	\end{proof}
	
	We will apply \Cref{pushout-g-algebras} to the cartesian square of groups
	\begin{equation}\label{phi-phi'}
		\begin{tikzcd}
			\cl{U}_{n+1} \arrow[d,"\varphi'_{n+1}"] \arrow[r,"\varphi_{n+1}"] & U_n \arrow[d,"\varphi'_n"] \\
			U_n \arrow[r,"\varphi_n"] & U_{n-1}	
		\end{tikzcd}
	\end{equation}
	where $\phi_{n+1}$ (respectively, $\phi'_{n+1}$) is the restriction homomorphism from $U_{n+1}$ or from $U_{n+1}$ to the top-left (respectively, bottom-right) $n\times n$ subsquare $U_n$ in $U_{n+1}$.
	
	The fact that the square (\ref{phi-phi'}) is cartesian is proved in \cite[Proposition 2.7]{merkurjev2022degenerate} when $p=2$. The proof extends to odd $p$ without change.
	
	\section{Massey products and Galois algebras}\label{u5-bar-section}
	
	In this section, we let $p$ be a prime number and we let $F$ be a field. With the exception of \Cref{mvc-3}, we assume that $\on{char}(F)\neq p$ and that $F$ contains a primitive $p$-th root of unity $\zeta$.
	
	\subsection{Galois \texorpdfstring{$U_3$}{U3}-algebras}
	
	Let $a,b\in F^\times$, and suppose that $(a,b)=0$ in $\on{Br}(F)$. By \Cref{cup-norm}, we may fix $\alpha\in F_a^\times$ and $\beta\in F_b^\times$ such that $N_a(\alpha)=b$ and $N_b(\beta)=a$. 
	
	We write $(\Z/p\Z)^2=\ang{\sigma_a,\sigma_b}$, and we view $F_{a,b}$ as a Galois $(\Z/p\Z)^2$-algebra as in \Cref{kummer-sub}. The projection $U_3\to \cl{U}_3=(\Z/p\Z)^2$ sends $e_{12}\mapsto \sigma_a$ and $e_{23}\mapsto \sigma_b$. We define the following elements of $U_3$:
	\[
	\sigma_a\coloneqq e_{12},\qquad
	\sigma_b\coloneqq e_{23},\qquad
	\tau\coloneqq e_{13}=[\sigma_a,\sigma_b].
	\]
	Suppose given $x\in F_a^\times$ such that 
	\begin{equation}\label{u3-x}(\sigma_a-1)x=\frac{b}{\alpha^p}. 
	\end{equation}
	The \'etale $F$-algebra $K\coloneqq (F_{a,b})_x$ has the structure of a Galois $U_3$-algebra such that the Galois $(\Z/p\Z)^2$-algebra $K^{Q_3}$ is equal to $F_{a,b}$, and
	\begin{equation}\label{u3-algebra-1}(\sigma_a-1)x^{1/p}=\frac{b^{1/p}}{\alpha},\qquad (\sigma_b-1)x^{1/p}=1,\qquad (\tau-1)x^{1/p}=\zeta^{-1}.\end{equation}
	Similarly, suppose given $y\in F_b^\times$ such that 
	\begin{equation}\label{u3-y}(\sigma_b-1)y=\frac{a}{\beta^p}.
	\end{equation}
	The \'etale $F$-algebra $K\coloneqq (F_{a,b})_y$ has the structure of a Galois $U_3$-algebra, such that the Galois $(\Z/p\Z)^2$-algebra $K^{Q_3}$ is equal to $F_{a,b}$, and
	\begin{equation}\label{u3-algebra-2}(\sigma_a-1)y^{1/p}=1,\qquad (\sigma_b-1)y^{1/p}=\frac{a^{1/p}}{\beta},\qquad (\tau-1)y^{1/p}=\zeta.
	\end{equation}
	In (\ref{u3-algebra-1}) and (\ref{u3-algebra-2}), the relation involving $\tau$ follows from the first two.
	
	If $x\in F_a^\times$ satisfies (\ref{u3-x}), then so does $ax$. We may thus apply (\ref{u3-algebra-1}) to $(F_{a,b})_{ax}$. Therefore $(F_{a,b})_{ax}$ has the structure of a Galois $U_3$-algebra, where $U_3$ acts via $\cl{U}_3=\on{Gal}(F_{a,b}/F)$ on $F_{a,b}$, and
	\[(\sigma_a-1)(ax)^{1/p}=\frac{b^{1/p}}{\alpha},\qquad (\sigma_b-1)(ax)^{1/p}=1,\qquad (\tau-1)(ax)^{1/p}=\zeta^{-1}.\]
	Similarly, if $y\in F_b^\times$ satisfies (\ref{u3-y}), we may apply (\ref{u3-algebra-2}) to $(F_{a,b})_{by}$. Therefore $(F_{a,b})_{by}$ admits a Galois $U_3$-algebra structure, where $U_3$ acts via $\cl{U}_3=\on{Gal}(F_{a,b}/F)$ on $F_{a,b}$, and 
	\[(\sigma_a-1)(by)^{1/p}=1,\qquad (\sigma_b-1)(by)^{1/p}=\frac{a^{1/p}}{\beta},\qquad (\tau-1)(by)^{1/p}=\zeta.\]
	
	\begin{lemma}\label{minus-x}
		(1) Let $x\in F_a^\times$ satisfy (\ref{u3-x}), and consider the Galois $U_3$-algebras $(F_{a,b})_x$ and $(F_{a,b})_{ax}$ as in (\ref{u3-algebra-1}). Then $(F_{a,b})_x\simeq (F_{a,b})_{ax}$ as Galois $U_3$-algebras. 
		
		(2) Let $y\in F_b^\times$ satisfy (\ref{u3-x}), and consider the Galois $U_3$-algebras $(F_{a,b})_y$ and $(F_{a,b})_{by}$ as in (\ref{u3-algebra-2}). Then $(F_{a,b})_y\simeq (F_{a,b})_{by}$ as Galois $U_3$-algebras.
	\end{lemma}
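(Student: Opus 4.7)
The plan is to prove part~(1) by constructing an explicit $U_3$-equivariant $F$-algebra isomorphism $\phi\colon (F_{a,b})_x\to (F_{a,b})_{ax}$, and to deduce part~(2) by the symmetric construction. The naive candidate that is the identity on $F_{a,b}$ and sends $x^{1/p}\mapsto (ax)^{1/p}/a^{1/p}$ is a well-defined $F_{a,b}$-algebra isomorphism, because $a=(a^{1/p})^p$ lies in $F_{a,b}^{\times p}$. However, this map is not $U_3$-equivariant: the formulas $(\ref{u3-algebra-1})$ give $\sigma_a\star \bigl((ax)^{1/p}/a^{1/p}\bigr)=(b^{1/p}/\alpha)(ax)^{1/p}/(\zeta a^{1/p})$, which differs from the image of $\sigma_a\ast x^{1/p}$ under the naive map by a factor of $\zeta$. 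I will correct this by twisting the restriction to $F_{a,b}$ by the Galois automorphism $\sigma_b^{-1}$, namely by defining $\phi$ as the unique $F$-algebra homomorphism satisfying
\[
\phi(a^{1/p})=a^{1/p},\qquad \phi(b^{1/p})=\zeta^{-1}b^{1/p},\qquad \phi(x^{1/p})=\frac{(ax)^{1/p}}{a^{1/p}}.
\]

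Well-definedness reduces to the identity $\phi(x^{1/p})^p=\phi(x)$, which holds because $x\in F_a$ and $\phi|_{F_a}$ is the identity, so $\phi(x)=x=\bigl((ax)^{1/p}/a^{1/p}\bigr)^p$. To verify $U_3$-equivariance of $\phi$ it suffices to test the generators $\sigma_a$ and $\sigma_b$ on each of $a^{1/p}, b^{1/p}, x^{1/p}$, since $\tau=[\sigma_a,\sigma_b]$. Five of these six checks are immediate from the definition of $\phi$ and from $(\ref{u3-algebra-1})$. The only substantive check is $\sigma_a$-equivariance on $x^{1/p}$: both $\phi(\sigma_a\ast x^{1/p})$ and $\sigma_a\star\phi(x^{1/p})$ compute to $\zeta^{-1}(b^{1/p}/\alpha)(ax)^{1/p}/a^{1/p}$, the factor $\zeta^{-1}$ appearing on the left from $\phi(b^{1/p})=\zeta^{-1}b^{1/p}$ and on the right from $\sigma_a\star a^{1/p}=\zeta a^{1/p}$ in the denominator.

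The choice of twist $\sigma_b^{-1}$ is not ad hoc; it is forced by the correspondence $(\ref{galois-alg})$. A direct calculation of the two homomorphisms $\phi_x,\phi_{ax}\colon\Gamma_F\to U_3$ attached to $(F_{a,b})_x$ and $(F_{a,b})_{ax}$ (using the choice $(ax)^{1/p}=a^{1/p}x^{1/p}$ in $F_s$) shows that they satisfy $\phi_{ax}(\gamma)=\phi_x(\gamma)\tau^{-\chi_a(\gamma)}$, and this perturbation is exactly conjugation by $\sigma_b\in U_3$ via the commutator identity $\sigma_b\sigma_a\sigma_b^{-1}=\tau^{-1}\sigma_a$. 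This perspective also makes part~(2) symmetric: swapping the roles of $a$ and $b$, I would define $\psi\colon(F_{a,b})_y\to(F_{a,b})_{by}$ by
\[
\psi(a^{1/p})=\zeta^{-1}a^{1/p},\qquad \psi(b^{1/p})=b^{1/p},\qquad \psi(y^{1/p})=\frac{(by)^{1/p}}{b^{1/p}},
\]
and verify $U_3$-equivariance by the mirror calculation using $(\ref{u3-algebra-2})$ and the identity $\sigma_a\sigma_b\sigma_a^{-1}=\tau\sigma_b$. I expect no serious obstacle: identifying the correct twist on $F_{a,b}$ is the only non-routine step, after which both parts reduce to direct bookkeeping on the defining formulas.
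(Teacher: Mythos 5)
Your proof is correct and follows the same strategy as the paper's: both arguments construct an explicit isomorphism $(F_{a,b})_x\to(F_{a,b})_{ax}$ that sends $x^{1/p}$ to $(ax)^{1/p}/a^{1/p}$ and restricts on $F_{a,b}$ to a power of the Galois automorphism $\sigma_b$. The one substantive difference is which power: the paper extends $\sigma_b$, while you extend $\sigma_b^{-1}$, and with the normalization $(\sigma_a-1)a^{1/p}=\zeta$ of \Cref{kummer-sub} your choice is the one that makes the $\sigma_a$-equivariance check on $x^{1/p}$ close up for odd $p$ --- the paper's displayed computation uses $\sigma_a(a^{-1/p})=\zeta a^{-1/p}$ where the stated convention gives $\zeta^{-1}a^{-1/p}$, and the two twists agree only when $p=2$ (the case treated in the earlier paper this lemma generalizes). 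Your well-definedness argument, the six generator checks, and the mirrored map $\psi$ with the $\sigma_a^{-1}$ twist for part (2) are all accurate, so nothing needs repair; the motivational paragraph identifying the twist via conjugation by $\sigma_b$ in $U_3$ is consistent with the commutator conventions $[\sigma_a,\sigma_b]=\tau$ used in \Cref{dwyer-section}.
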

	
	\begin{proof}
		(1) The automorphism $\sigma_b\colon F_{a,b}\to F_{a,b}$ extends to an isomorphism of \'etale algebras $f\colon (F_{a,b})_x\to (F_{a,b})_{ax}$ by sending $x^{1/p}$ to $(ax)^{1/p}a^{-1/p}$. The map $f$ is well defined because 
		$f(x^{1/p})^p=x=[(ax)^{1/p}a^{-1/p}]^p$. We check that it is $U_3$-equivariant. This is true on $F_{a,b}$ because  $\sigma_a\sigma_b=\sigma_b\sigma_a$ on $F_{a,b}$. Moreover,
		\begin{align*}
			\sigma_a(f(x^{1/p}))=\sigma_a((ax)^{1/p})\cdot \sigma_a(a^{-1/p})=(b^{1/p}/\alpha)(ax)^{1/p}\cdot\zeta a^{-1/p} \\
			=(\zeta b^{1/p}/\alpha)\cdot (ax)^{1/p}a^{-1/p}=f((b^{1/p}/\alpha)(x^{1/p}))=f(\sigma_a(x^{1/p}))
		\end{align*}
		and
		\[\sigma_b(f(x^{1/p}))=\sigma_b((ax)^{1/p})\cdot \sigma_b(a^{-1/p})=(ax)^{1/p}a^{-1/p}=f(x^{1/p})=f(\sigma_b(x^{1/p})).\]
		Thus $f$ is $U_3$-equivariant, as desired.
		
		(2) The proof is similar to that of (1).
	\end{proof}
	
	\begin{prop}\label{uu3}
		Let $a,b\in F^\times$ be such that $(a,b)=0$ in $\on{Br}(F)$, and fix $\alpha\in F_a^\times$ and $\beta\in F_b^\times$ such that $N_a(\alpha)=b$ and $N_b(\beta)=a$. 
		
		(1) Every Galois $U_3$-algebra $K$ over $F$ such that $K^{Q_3}\simeq F_{a,b}$ as $(\Z/p\Z)^2$-algebras is of the form $(F_{a,b})_x$ for some $x\in F_a^\times$ as in (\ref{u3-x}), with $U_3$-action given by (\ref{u3-algebra-1}).
		
		(2) Every Galois $U_3$-algebra $K$ over $F$ such that $K^{Q_3}\simeq F_{a,b}$ as $(\Z/p\Z)^2$-algebras is of the form $(F_{a,b})_y$ for some $y\in F_b^\times$ as in (\ref{u3-y}), with $U_3$-action given by (\ref{u3-algebra-2}).
		
		(3) Let $(F_{a,b})_x$ and $(F_{a,b})_y$ be Galois $U_3$-algebras as in (\ref{u3-algebra-1}) and (\ref{u3-algebra-2}), respectively. The Galois $U_3$-algebras $(F_{a,b})_x$ and $(F_{a,b})_y$ are isomorphic if and only if there exists $w\in F_{a,b}^\times$ such that
		\[w^p=xy,\qquad (\sigma_a-1)(\sigma_b-1)w=\zeta.\]
	\end{prop}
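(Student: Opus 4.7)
The plan for (1) is to realize $K$ as a cyclic Kummer extension of $F_{a,b}$ via $Z_3 = \ang{\tau}$ and then descend the Kummer generator down to $F_a^\times$ using two applications of Hilbert~90. Part (2) is obtained by interchanging the roles of $a$ and $b$. For (3), one direction builds $w$ out of the isomorphism by a direct calculation, while the other constructs an isomorphism from $w$.

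For (1), Kummer theory applied to the cyclic extension $K/F_{a,b}$ writes $K = (F_{a,b})_z$, and after rescaling one has $(\tau-1)z^{1/p} = \zeta^{-1}$. Writing $\sigma_b(z^{1/p}) = \lambda_b \cdot z^{1/p}$ with $\lambda_b \in F_{a,b}^\times$, the relation $\sigma_b^p = 1$ gives $N_{F_{a,b}/F_a}(\lambda_b) = 1$, so Hilbert~90 for $F_{a,b}/F_a$ produces $c$ with $(\sigma_b - 1)c = \lambda_b^{-1}$; replacing $z$ by $zc^p$ reduces to the case $\lambda_b = 1$, i.e.\ $z \in F_a^\times$. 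Setting $\sigma_a(z^{1/p}) = \lambda_a \cdot z^{1/p}$, the commutator identity $[\sigma_a, \sigma_b] = \tau$ evaluated on $z^{1/p}$ forces $(\sigma_b-1)\lambda_a = \zeta$, so $\lambda_a = b^{1/p}/\alpha_0$ for a unique $\alpha_0 \in F_a^\times$; a norm computation using $\sigma_a^p = 1$ then yields $N_a(\alpha_0) = b$. Finally, $\alpha/\alpha_0$ has norm $1$ in $F_a/F$, so Hilbert~90 once more gives $\gamma \in F_a^\times$ with $(\sigma_a-1)\gamma = \alpha_0/\alpha$, and $z\gamma^p$ is the required $x$.

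For the forward direction of (3), given an isomorphism $f$, set $w := x^{1/p} \cdot f^{-1}(y^{1/p}) \in (F_{a,b})_x$. The $\tau$-eigenvalues $\zeta^{-1}$ and $\zeta$ combine to $1$, so $w \in F_{a,b}$; clearly $w^p = xy$, and applying the prescribed actions from (\ref{u3-algebra-1}) and (\ref{u3-algebra-2}) yields $(\sigma_a-1)w = b^{1/p}/\alpha$ and $(\sigma_b-1)w = a^{1/p}/\beta$, so $(\sigma_a-1)(\sigma_b-1)w = \zeta$. For the converse, given $w$ as in the statement, observe that $Y := w/x^{1/p}$ inside $(F_{a,b})_x$ satisfies $Y^p = y$, so $(F_{a,b})_x$ is generated over $F_{a,b}$ by this $p$-th root of $y$; one then defines a candidate isomorphism $(F_{a,b})_y \to (F_{a,b})_x$ sending $y^{1/p}$ to a suitable scalar multiple of $Y$ and acting on $F_{a,b}$ by the element of $(\Z/p\Z)^2$ needed to absorb the scalar discrepancies between $\sigma_a(w)/w, \sigma_b(w)/w$ and their ``target'' values $b^{1/p}/\alpha, a^{1/p}/\beta$.

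The main obstacle will be the converse direction of (3): automorphisms of the target $(F_{a,b})_y$ as a Galois $U_3$-algebra form only the center $Z_3$, which acts trivially on $F_{a,b}$, so one cannot normalize the isomorphism to restrict to the identity on $F_{a,b}$. The restriction is instead forced to be a specific nontrivial element of $(\Z/p\Z)^2$, and the role of the condition $(\sigma_a-1)(\sigma_b-1)w = \zeta$ is exactly to guarantee that the resulting twist is consistent with the commutator relation $[\sigma_a,\sigma_b] = \tau$, so that the candidate map genuinely extends to an isomorphism of Galois $U_3$-algebras.
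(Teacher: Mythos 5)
Your treatment of parts (1) and (2) is essentially the paper's argument: realize $K$ as $(F_{a,b})_z$ via the central $\Z/p\Z=\ang{\tau}$, use $\sigma_b^p=1$ and Hilbert 90 for $F_{a,b}/F_a$ to force $z\in F_a^\times$, read off $(\sigma_b-1)(\sigma_a-1)z^{1/p}=\zeta$ from the commutator relation, and apply Hilbert 90 for $F_a/F$ to replace $\alpha_0$ by $\alpha$. That part is fine. The problem is part (3), in both directions. In the forward direction, your claim that ``clearly $w^p=xy$'' for $w=x^{1/p}\cdot f^{-1}(y^{1/p})$ is false in general: $w^p=x\cdot f^{-1}(y)$, and $f^{-1}$ restricts on $F_{a,b}$ to some element $\sigma_a^k\sigma_b^i$ of $(\Z/p\Z)^2$ (not necessarily the identity, for exactly the reason you yourself point out later: $U_3$-automorphisms of the source only realize the center $Z_3$). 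Hence $f^{-1}(y)=\sigma_b^i(y)$, and $w^p=x\,\sigma_b^i(y)$. One must then observe that $y/\sigma_b^i(y)$ equals $a^{-i}$ times an explicit $p$-th power built from $\beta$, and correct $w$ by $a^{i/p}/\Prod_{j}\sigma_b^j(\beta)$ --- a correction which preserves the condition $(\sigma_a-1)(\sigma_b-1)w=\zeta$ but is needed to get $w^p=xy$. Your argument omits this entirely.

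In the converse direction your sketch has a more serious gap. From $w^p=xy$ one only gets $(\sigma_a-1)w=\zeta_a\cdot b^{1/p}/\alpha$ and $(\sigma_b-1)w=\zeta_b\cdot a^{1/p}/\beta$ with $\zeta_a\in F_a^\times$, $\zeta_b\in F_b^\times$ satisfying $\zeta_a^p=\zeta_b^p=1$. Your plan is to ``absorb the scalar discrepancies'' by letting the isomorphism act on $F_{a,b}$ through an element of $(\Z/p\Z)^2$; this implicitly assumes $\zeta_a$ and $\zeta_b$ are powers of $\zeta$. That is true when $F_a$ (resp.\ $F_b$) is a field, but $F_a$ may be split, $F_a\simeq F^p$, in which case $\zeta_a$ is an arbitrary norm-one tuple of $p$-th roots of unity and is \emph{not} of the form $\zeta^i$. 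The paper's \Cref{claim-new-w} deals with precisely this: one must construct $\eta_a\in F_a^\times$ with $\eta_a^p=1$ and $(\sigma_a-1)\eta_a=\zeta_a$ by a telescoping argument in $F^p$, and only after dividing $w$ by $\eta_a\eta_b$ are the residual discrepancies powers of $\zeta$, which are then absorbed by replacing $x$ by $a^ix$ and $y$ by $b^jy$ via \Cref{minus-x}. Without this step the candidate map you describe is not well defined on the split components, so the converse direction is not proved as written.
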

	
	\begin{proof}       
		(1) Since $Q_3=\ang{\tau}\simeq \Z/p\Z$ and $K^{Q_3}\simeq F_{a,b}$ as $(\Z/p\Z)^2$-algebras, we have an isomorphism of \'etale $F_{a,b}$-algebras $K\simeq (F_{a,b})_z$, for some $z\in F_{a,b}^\times$ such that $(\tau-1)z^{1/p}=\zeta^{-1}$. We may suppose that $K=(F_{a,b})_z$. 
		As $\tau$ commutes with $\sigma_b$ we have
		\[
		(\tau-1)(\sigma_b-1)z^{1/p}=(\sigma_b-1)(\tau-1)z^{1/p}=(\sigma_b-1)\zeta^{-1}=1,
		\]
		hence $(\sigma_b-1)z^{1/p}\in F_{a,b}^\times$. By Hilbert's Theorem 90 for the extension $F_{a,b}/F_a$, there is $t\in F_{a,b}^\times$
		such that $(\sigma_b-1)z^{1/p}=(\sigma_b-1)t$. Replacing $z$ by $zt^{-p}$,  we may thus assume  that $(\sigma_b-1)z^{1/p}=1$.
		In particular, $z\in F_a^\times$. Since $(\tau-1)z^{1/p}=\zeta^{-1}$, we have $\sigma_b\sigma_a(z^{1/p})=\zeta \sigma_a\sigma_b(z^{1/p})$. Thus
		\[(\sigma_b-1) (\sigma_a-1)z^{1/p} =(\sigma_b\sigma_a-\sigma_a\sigma_b+ (\sigma_a-1)(\sigma_b-1))z^{1/p}=\zeta (\sigma_a-1) (\sigma_b-1)z^{1/p} = \zeta,\]
		and hence   $(\sigma_a-1)z^{1/p} = b^{1/p}/\alpha'$ for some $\alpha'\in F_a^\times$. Moreover $N_a(\alpha'/\alpha)=b/b=1$, and so by Hilbert's Theorem 90 there exists $\theta\in F_a^{\times}$ such that $\alpha'/\alpha=(\sigma_a-1)\theta$. 
		We define $x\coloneqq z\theta^p\in F_a^\times$, and set $x^{1/p}\coloneqq z^{1/p}\theta\in (F_{a,b})_z^\times$. Then $K=(F_{a,b})_x$, where
		\[(\sigma_a-1)x^{1/p}=(\sigma_a-1)w\cdot (\sigma_a-1)\theta=\frac{b^{1/p}}{\alpha'}\cdot\frac{\alpha'}{\alpha}=\frac{b^{1/p}}{\alpha}\]
		and $(\sigma_b-1)x^{1/p}=1$, as desired.
		
		(2) The proof is analogous to that of (1). 
		
		(3) Suppose given an isomorphism of Galois $U_3$-algebras between $(F_{a,b})_x$ and $(F_{a,b})_y$. Let $t\in (F_{a,b})_x$ be the image of $y^{1/p}$ under the isomorphism and set
		\[w'\coloneqq x^{1/p} t \in (F_{a,b})_x.\] 
		Set $y'\coloneqq  t^p$.
		We have 
		$(\tau-1)w'=\zeta^{-1}\cdot\zeta=1$, and hence
		$w'\in F_{a,b}^\times$.  We have $(w')^p=xy'$. Since $F_b$ coincides with the $\ang{\sigma_a,\tau}$-invariant subalgebra of $(F_{a,b})_x$ and $(F_{a,b})_y$, the isomorphism $(F_{a,b})_y\to (F_{a,b})_x$ restricts to an isomorphism of Galois $\Z/p\Z$-algebras $F_b \to F_b$. Since the automorphism group of $F_b$ as a Galois $(\Z/p\Z)$-algebra is $\Z/p\Z$, generated by $\sigma_b$, this isomorphism $F_b \to F_b$ is equal to $\sigma_b^i$ for some $i\geq 0$. Thus $y'=\sigma_b^i(y)$. Define 
		\[w\coloneqq (w' a^{i/p})/\Prod_{j=0}^i\sigma_b^j(\beta)\in F_{a,b}^\times.\]
		We have
		\[(1-\sigma_b^i)y=(\Sum_{j=0}^i\sigma_b^j(1-\sigma_b))y=a^i/(\Prod_{j=0}^i\sigma_b^j(\beta^p))=w^p/(w')^p.\]
		Therefore
		\begin{equation}\label{w-eq1}w^p=(w')^p(1-\sigma_b^i)y=x\sigma_b^i(y)(1-\sigma_b^i)y=xy.\end{equation}
		We have $(\sigma_b-1)x^{1/p}=1$ and \[(\sigma_a-1)(\sigma_b-1)t=(\sigma_a-1)(\sigma_b-1)y^{1/p}=(\sigma_a-1)(a^{1/p}/\beta)=\zeta,\] therefore
		\[(\sigma_a-1)(\sigma_b-1)w'=(\sigma_a-1)(\sigma_b-1)t=\zeta.\]
		Since $(\sigma_a-1)(\sigma_b-1)a^{1/p}=1$ and $(\sigma_a-1)(\sigma_b-1)\beta=1$, we conclude that
		\begin{equation}\label{w-eq2}(\sigma_a-1)(\sigma_b-1)w=(\sigma_a-1)(\sigma_b-1)w'=1.\end{equation}
		Putting (\ref{w-eq1}) and (\ref{w-eq2}) together, we see that $w$ satisfies the conditions of (3).
		
		Conversely, suppose given $w'\in F_{a,b}^\times$ such that 
		\[xy=(w')^p,\qquad (\sigma_a-1)(\sigma_b-1)w'=\zeta.\] 
		
		\begin{claim}\label{claim-new-w}
			There exists $w\in F_{a,b}^\times$ such that \[xy=w^p,\qquad (\sigma_a-1)w=\zeta^{-i}\frac{b^{1/p}}{\alpha},\qquad (\sigma_b-1)w=\zeta^{-j}\frac{a^{1/p}}{\beta}\]
			for some integers $i$ and $j$.
		\end{claim}
		
		\begin{proof}[Proof of \Cref{claim-new-w}]
			We first find $\eta_a\in F_a^\times$ such that
			\begin{equation}\label{eta_a}
				\eta_a^p=1, \qquad (\sigma_a-1)(w'/\eta_a)=\zeta^{-i}\frac{b^{1/p}}{\alpha}.
			\end{equation}
			We have
			\[
			(\sigma_a - 1) (w')^p=(\sigma_a - 1) x=\frac{b}{\alpha^p}.   
			\]
			Let \[\zeta_a\coloneqq (\sigma_a-1)w'\cdot \alpha \cdot b^{-1/p}\in F_{a,b}^\times.\] We have $\zeta_a^p=1$. Moreover, $(\sigma_b-1)\zeta_a=\zeta\cdot 1\cdot\zeta^{-1}=1$,
			that is, $\zeta_a$ belongs to $F_a^\times$. If $F_a$ is a field, this implies that $\zeta_a=\zeta^i$ for some integer $i$, and (\ref{eta_a}) holds for $\eta_a=1$. 
			
			Suppose that $F_a$ is not a field. Then $F_a\simeq F^p$, where $\sigma_a$ acts by cyclically permuting the coordinates: \[\sigma_a(x_1,x_2,\dots,x_p)=(x_2,\dots,x_p, x_1).\] 
			We have $\zeta_a=(\zeta_1,\dots,\zeta_p)$ in $F_a=F^p$, where $\zeta_i\in F^\times$ is a $p$-th root of unity for all $i$. We have $N_a(\zeta_a)=N_a(\alpha)/b=1$, and so $\zeta_1\cdots\zeta_p=1$. Inductively define $\eta_1\coloneqq 1$ and $\eta_{i+1}\coloneqq \zeta_i\eta_i$ for all $i=1,\dots,p-1$. Then
			\[\eta_1/\eta_p=(\eta_1/\eta_2)\cdot(\eta_2/\eta_3)\cdots(\eta_{p-1}/\eta_p)=\zeta_1^{-1}\zeta_2^{-1}\cdots\zeta_{p-1}^{-1}=\zeta_p.\]
			Therefore the element $\eta_a\coloneqq (\eta_1,\dots,\eta_p)\in F^p=F_a$ satisfies $\eta_a^p=1$ and
			\[(\sigma_a-1)\eta_a=(\eta_2/\eta_1,\dots,\eta_p/\eta_{p-1},\eta_1/\eta_p)=(\zeta_1,\dots,\zeta_{p-1},\zeta_p)=\zeta_a.\]
			Thus
			\[\eta_a^p=1,\qquad (\sigma_a-1)(w'/\eta_a)=(\sigma_a-1)w'\cdot \zeta_a^{-1}=\frac{b^{1/p}}{\alpha}.\]
			Independently of whether $F_a$ is a field or not, we have found $\eta_a$ satisfying (\ref{eta_a}).
			
			Similarly, we construct $\eta_b\in F_b^\times$ such that
			\begin{equation}\label{eta_b}
				\eta_b^p=1,\qquad (\sigma_b-1)(w'/\eta_b)=\zeta^{-j}\frac{a^{1/p}}{\beta},
			\end{equation}
			for some integer $j$. Set $w\coloneqq w'/(\eta_a\eta_b)\in F_{a,b}^\times$. Putting together (\ref{eta_a}) and (\ref{eta_b}), we deduce that $w$ satisfies the conclusion of \Cref{claim-new-w}.
		\end{proof}
		
		Let $w\in F_{a,b}^\times$ be as in \Cref{claim-new-w}. By \Cref{minus-x}(1), applied $i$ times, the Galois $U_3$-algebra $(F_{a,b})_x$ is isomorphic to $(F_{a,b})_{a^ix}$, where
		\[(\sigma_a-1)(a^ix)^{1/p}=\frac{b^{1/p}}{\alpha},\qquad (\sigma_b-1)(a^ix)^{1/p}=1,\]
		By \Cref{minus-x}(2), applied $j$ times, the Galois $U_3$-algebra $(F_{a,b})_y$ is isomorphic to $(F_{a,b})_{b^jy}$, where
		\[(\sigma_a-1)(b^jy)^{1/p}=1,\qquad (\sigma_b-1)(b^jy)^{1/p}=\frac{a^{1/p}}{\beta}.\]
		It thus suffices to construct an isomorphism of $U_3$-algebras $(F_{a,b})_{a^ix}\simeq (F_{a,b})_{b^jy}$. Let  \[\tilde{w}\coloneqq wa^{i/p}b^{j/p}\in F_{a,b}^\times,\] so that
		\[(\sigma_a-1)\tilde{w}=\frac{a^{1/p}}{\beta},\qquad (\sigma_b-1)\tilde{w}=\frac{b^{1/p}}{\alpha}.\]
		Let $f\colon (F_{a,b})_{a^ix}\to (F_{a,b})_{b^jy}$ be the isomorphism of \'etale algebras which is the identity on $F_{a,b}$ and sends $(a^ix)^{1/p}$ to $\tilde{w}/(b^jy)^{1/p}$. Note that $f$ is well defined because \[(\tilde{w})^p=wa^ib^j=(a^ix)(b^jy).\]
		Moreover,
		\[(\sigma_a-1)(\tilde{w}/(b^jy)^{1/p})=\frac{a^{1/p}}{\beta}=(\sigma_a-1)(a^ix)^{1/p},\]
		\[(\sigma_b-1)(\tilde{w}/(b^jy)^{1/p})=\frac{b^{1/p}}{\alpha}\cdot \frac{\alpha}{b^{1/p}}=1=(\sigma_b-1)(a^ix)^{1/p},\]
		and hence $f$ is $U_3$-equivariant.
	\end{proof}
	
	\subsection{Galois \texorpdfstring{$\cl{U}_4$}{U4}-algebras}
	Let $a,b,c\in F^\times$ be such that $(a,b)=(b,c)=0$ in $\on{Br}(F)$. By \Cref{cup-norm}, we may fix $\alpha\in F_a^\times$ and $\gamma\in F_c^\times$ be such that $N_a(\alpha)=N_c(\gamma)=b$. We have $\on{Gal}(F_{a,b,c}/F)=\ang{\sigma_a,\sigma_b,\sigma_c}$. The projection map $\cl{U}_4\to (\Z/p\Z)^3$ is given by $\cl{e}_{12}\mapsto\sigma_a$, $\cl{e}_{23}\mapsto\sigma_b$, $\cl{e}_{34}\mapsto\sigma_c$. Its kernel $\cl{Q}_4\subset \cl{U}_4$ is isomorphic to $(\Z/p\Z)^2$, generated by $\cl{e}_{13}$ and $\cl{e}_{24}$.
	We define the following elements of $\cl{U}_4$:
	\[\sigma_a\coloneqq \cl{e}_{12},\qquad \sigma_b\coloneqq \cl{e}_{23},\qquad  \sigma_c\coloneqq \cl{e}_{34},\qquad \tau_{ab}\coloneqq \cl{e}_{13},\qquad \tau_{bc}\coloneqq \cl{e}_{24}.\]

	Let $x\in F_a^\times$ and $z\in F_c^\times$ be such that
	\begin{equation}\label{uu4-0}(\sigma_a-1)x=\frac{b}{\alpha^p},\qquad (\sigma_c-1)z=\frac{b}{\gamma^p},\end{equation}
	and consider the Galois $\cl{U}_4$-algebra $K\coloneqq (F_{a,b,c})_{x,z}$, where $\cl{U}_4$ acts on $F_{a,b,c}$ via the surjection onto $\on{Gal}(F_{a,b,c}/F)$, and 
	\begin{equation}\label{uu4-1}
		(\sigma_a-1)x^{1/p}=\frac{b^{1/p}}{\alpha},\qquad (\sigma_b-1)x^{1/p}=1,\qquad (\sigma_c-1)x^{1/p}=1,\end{equation}
	\begin{equation}\label{uu4-2}
		(\tau_{ab}-1)x^{1/p}=\zeta^{-1},\qquad (\tau_{bc}-1)x^{1/p}=1,
	\end{equation}
	\begin{equation}\label{uu4-3}
		(\sigma_a-1)(x')^{1/p}=1,\qquad (\sigma_b-1)(x')^{1/p}=1,\qquad   (\sigma_c-1)(x')^{1/p}=\frac{b^{1/p}}{\gamma},\end{equation}
	\begin{equation}\label{uu4-4}(\tau_{ab}-1)(x')^{1/p}=1,\qquad (\tau_{bc}-1)(x')^{1/p}=\zeta.
	\end{equation}
	Note that (\ref{uu4-2}) follows from (\ref{uu4-1}) and (\ref{uu4-4}) follows from (\ref{uu4-3}). We leave to the reader to check that the relations (\ref{relation0-bar})-(\ref{relation4-bar}) are satisfied.
	
	\begin{prop}\label{uu4-equiv}
		Let $a,b,c\in F^\times$ be such that $(a,b)=(b,c)=0$ in $\on{Br}(F)$. Fix $\alpha\in F_a^\times$ and $\gamma\in F_c^\times$ such that $N_a(\alpha)=N_c(\gamma)=b$. Let $K$ be a Galois $\cl{U}_4$-algebra such that $K^{\cl{Q}_4}\simeq F_{a,b,c}$ as $(\Z/p\Z)^3$-algebras. Then there exist $x\in F_a^\times$ and $x'\in F_c^\times$ such that $K\simeq (F_{a,b,c})_{x,x'}$ as Galois $\cl{U}_4$-algebras, where $\cl{U}_4$ acts on $(F_{a,b,c})_{x,x'}$ by (\ref{uu4-1})-(\ref{uu4-4}).
	\end{prop}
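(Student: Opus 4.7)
My plan is to reduce to Proposition \ref{uu3} by decomposing $K$ according to the cartesian square (\ref{phi-phi'}) with $n=3$, which expresses $\cl{U}_4$ as the pullback $U_3\times_{U_2} U_3$. Let $H\coloneqq \ker(\varphi_4\colon \cl{U}_4\to U_3)$ and $H'\coloneqq \ker(\varphi'_4\colon \cl{U}_4\to U_3)$, where $\varphi_4$ (resp. $\varphi'_4$) is the top-left (resp. bottom-right) restriction; then $H\cap H'=\{1\}$, so $S\coloneqq HH'$ is normal, and $\cl{U}_4/S\simeq U_2=\Z/p\Z$. By \Cref{pushout-g-algebras}(1) applied to $K$, we obtain a $\cl{U}_4$-equivariant isomorphism $K\simeq K^H\otimes_{K^S}K^{H'}$, where $K^H$ is a Galois $U_3$-algebra (for the top-left copy of $U_3$) and $K^{H'}$ is a Galois $U_3$-algebra (for the bottom-right copy).

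Next I would identify the $(\Z/p\Z)^2$-invariants of these two Galois $U_3$-algebras. Tracking the composition $\cl{U}_4\to U_3\to (\Z/p\Z)^2$ through the generators shows that it factors through $\cl{U}_4\to (\Z/p\Z)^3$ followed by projection onto the first two (resp. last two) coordinates. From the hypothesis $K^{\cl{Q}_4}\simeq F_{a,b,c}$ as $(\Z/p\Z)^3$-algebras it then follows that $(K^H)^{Q_3}\simeq F_{a,b}$ as $(\Z/p\Z)^2$-algebras and $(K^{H'})^{Q_3}\simeq F_{b,c}$ as $(\Z/p\Z)^2$-algebras. Similarly $K^S\simeq F_b$. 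We may thus apply \Cref{uu3}(1) to $K^H$ with the pair $(a,b)$ and the fixed element $\alpha$, obtaining $x\in F_a^\times$ with $(\sigma_a-1)x=b/\alpha^p$ and an isomorphism $K^H\simeq (F_{a,b})_x$ of Galois $U_3$-algebras with the action (\ref{u3-algebra-1}). Applying \Cref{uu3}(2) to $K^{H'}$ with the pair $(b,c)$ and the element $\gamma$ (playing the role of the ``$\beta$'' of \Cref{uu3}), we obtain $x'\in F_c^\times$ with $(\sigma_c-1)x'=b/\gamma^p$ and an isomorphism $K^{H'}\simeq (F_{b,c})_{x'}$ of Galois $U_3$-algebras with the action (\ref{u3-algebra-2}), after relabeling.

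Finally, I would reassemble these pieces. Since $F_{a,b}\otimes_{F_b}F_{b,c}\simeq F_{a,b,c}$, adjoining $x^{1/p}$ and $(x')^{1/p}$ on either side gives the identification
\[
K\simeq (F_{a,b})_x\otimes_{F_b}(F_{b,c})_{x'}\simeq (F_{a,b,c})_{x,x'}
\]
as $F$-algebras. It remains to verify that the induced $\cl{U}_4$-action is the one specified by (\ref{uu4-1})--(\ref{uu4-4}). This is done by checking each generator of $\cl{U}_4$ against each of $x^{1/p}$ and $(x')^{1/p}$: for $g\in\cl{U}_4$, its action on the factor $K^H$ is via $\varphi_4(g)\in U_3$ and its action on $K^{H'}$ is via $\varphi'_4(g)\in U_3$, so the formulas reduce directly to (\ref{u3-algebra-1}) and (the relabeled) (\ref{u3-algebra-2}). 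For instance, $\sigma_c=\cl{e}_{34}$ lies in $H$ and its image under $\varphi'_4$ is the bottom-right $\sigma_c$, yielding $(\sigma_c-1)x^{1/p}=1$ and $(\sigma_c-1)(x')^{1/p}=b^{1/p}/\gamma$; the other seven cases are analogous.

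The main conceptual obstacle is the first step: recognizing that the cartesian diagram (\ref{phi-phi'}) lets us glue two copies of $U_3$ along their common $U_2$, thereby reducing a statement about $\cl{U}_4$-algebras to the previously established classification of $U_3$-algebras. Once this is in place, the remainder is routine but requires careful bookkeeping to keep track of which generator of $\cl{U}_4$ maps to which generator of the two copies of $U_3$, and to ensure the Proposition \ref{uu3} data match on both sides of the tensor product (i.e., that we apply part (1) on the top-left and part (2) on the bottom-right, so that the resulting central generators $\tau_{ab}$ and $\tau_{bc}$ act by $\zeta^{-1}$ and $\zeta$ respectively, as in (\ref{uu4-2}) and (\ref{uu4-4})).
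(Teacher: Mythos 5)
Your proposal is correct and follows essentially the same route as the paper: decompose $K$ via the cartesian square (\ref{phi-phi'}) for $n=3$ using \Cref{pushout-g-algebras}, identify $K^H\simeq (F_{a,b})_x$ and $K^{H'}\simeq (F_{b,c})_{x'}$ via \Cref{uu3}(1) and (2) with $\alpha$ and $\gamma$, and reassemble the tensor product over $K^S\simeq F_b$. Your explicit generator-by-generator check of the $\cl{U}_4$-action is left implicit in the paper but is the same computation.
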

	
	\begin{proof}
		Let $H$ (resp. $H'$) be the subgroup of $\cl{U}_4$ generated by $\sigma_c$ and $\tau_{bc}$ (resp. $\sigma_b$ and $\tau_{ab}$), and let $S$ be the subgroup of $\cl{U}_4$ generated by $H$ and $H'$. Note that $K^H$ is a Galois $U_3$-algebra over $F$ such that $(K^H)^{Q_3}\simeq F_{a,b}$ as $(\Z/p\Z)^2$-algebras and $K^S\simeq F_b$ as $(\Z/p\Z)$-algebras. Thus by \Cref{uu3}(1) there exists $x\in F_a^\times$ such that $K^H\simeq (F_{a,b})_x$ as Galois $U_3$-algebras. Similarly, by \Cref{uu3}(2) there exists $x'\in F_c^\times$ such that $K^{H'}\simeq (F_{b,c})_{x'}$ as Galois $U_3$-algebras. Therefore $x$ satisfies (\ref{uu4-1}) and $x'$ satisfies (\ref{uu4-3}). We apply \Cref{pushout-g-algebras}(2) to (\ref{phi-phi'}). We obtain the isomorphisms of $\cl{U}_4$-algebras
		\[K\simeq K^H\otimes_{K^S}K^{H'}\simeq (F_{a,b,c})_{x,x'},\]
		where $(F_{a,b,c})_{x,x'}$ is the $\cl{U}_4$-algebra given by (\ref{uu4-1}) and (\ref{uu4-3}).
	\end{proof}
	
	\subsection{Galois \texorpdfstring{$U_4$}{U4}-algebras}
	
	Let $a,b,c\in F^\times$, and suppose that $(a,b)=(b,c)=0$ in $\on{Br}(F)$. We write $(\Z/p\Z)^3=\ang{\sigma_a,\sigma_b,\sigma_c}$ and view $F_{a,b,c}$ as a Galois $(\Z/p\Z)^3$-algebra over $F$ as in \Cref{kummer-sub}. 	The quotient map $U_4\to (\Z/p\Z)^3$ is given by $e_{12}\mapsto \sigma_a$, $e_{23}\mapsto \sigma_b$ and $e_{34}\mapsto \sigma_c$. The kernel $Q_4$ of this map is generated by $e_{13}$, $e_{24}$ and $e_{14}$ and is isomorphic to $(\Z/p\Z)^3$. 
	We define the following elements of $U_4$:
	\[
	\sigma_a\coloneqq e_{12},\qquad
	\sigma_b\coloneqq e_{23},\qquad
	\sigma_c\coloneqq e_{34},
	\]\[
	\tau_{ab}\coloneqq e_{13}=[\sigma_a,\sigma_b],\qquad \tau_{bc}\coloneqq e_{24}=[\sigma_b,\sigma_c],\qquad \rho\coloneqq e_{14}=[\sigma_a,\tau_{bc}]=[\tau_{ab},\sigma_c].
	\]
	
	\begin{prop}\label{uu4}
		Let $a,b,c\in F^\times$ be such that $(a,b)=(b,c)=0$ in $\on{Br}(F)$. Let $\alpha\in F_a^\times$ and $\gamma\in F_c^\times$ be such that $N_a(\alpha)=b$ and $N_c(\gamma)=b$. Let $K$ be a Galois $\cl{U}_4$-algebra such that $K^{\cl{Q}_4}\simeq F_{a,b,c}$
		as $(\Z/p\Z)^3$-algebras.
		
		There exists a Galois $U_4$-algebra $L$ over $F$ such that $L^{Z_4}\simeq K$ as $\cl{U}_4$-algebras if and only if there exist $u,u'\in F_{a,c}^\times$ such that
		\[\alpha\cdot (\sigma_a-1)u=\gamma\cdot(\sigma_c-1)u'.\] 
		and such that $K$ is isomorphic to the Galois $\cl{U}_4$-algebra $(F_{a,b,c})_{x,x'}$ determined by (\ref{uu4-1})-(\ref{uu4-4}), where $x=N_c(u)\in F_a^\times$ and  $x'=N_a(u')\in F_c^\times$.
	\end{prop}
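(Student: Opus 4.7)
The plan is to prove the two implications separately by explicit construction. The key insight, driving both directions, is that the identity $\alpha(\sigma_a-1)u=\gamma(\sigma_c-1)u'$ is precisely the obstruction to commutativity of the lifted actions of $\sigma_a$ and $\sigma_c$ on a new Kummer generator $s^{1/p}$ of $L$ over $K$, as required by the relation $\sigma_a\sigma_c=\sigma_c\sigma_a$ in $U_4$. The construction parallels that of \Cref{uu3}, with Hilbert's Theorem 90 used to produce the auxiliary element $s$.

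For the implication $(\Leftarrow)$, suppose $u, u' \in F_{a,c}^\times$ satisfy the key identity and $K\simeq (F_{a,b,c})_{x,x'}$ with $x = N_c(u)$, $x' = N_a(u')$. Applying $N_c$ to the key identity and using $N_c((\sigma_c-1)u')=1$, one verifies $(\sigma_a-1)x = b/\alpha^p$; analogously $(\sigma_c-1)x' = b/\gamma^p$, so the $\cl{U}_4$-structure (\ref{uu4-1})--(\ref{uu4-4}) on $K$ is well-defined. A direct computation using the key identity yields the compatibility $(\sigma_c-1)(x'/(u')^p) = (\sigma_a-1)(x/u^p)$ in $F_{a,c}^\times$, so Hilbert's Theorem 90 for the Galois $(\Z/p\Z)^2$-algebra $F_{a,c}/F$ produces $s\in F_{a,c}^\times$ with $(\sigma_a-1)s = x'/(u')^p$ and $(\sigma_c-1)s = x/u^p$. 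Setting $L\coloneqq K_s = (F_{a,b,c})_{x,x',s}$, I would extend the $\cl{U}_4$-action on $K$ to a $U_4$-action on $L$ by prescribing
\[
\sigma_a(s^{1/p})=(x')^{1/p} s^{1/p}/u',\quad \sigma_b(s^{1/p})=s^{1/p},\quad \sigma_c(s^{1/p})=x^{1/p} s^{1/p}/u.
\]
The verification of (\ref{relation0})--(\ref{relation3}) is then routine: $\sigma_a^p=\sigma_c^p=1$ follow from $N_a(u')=x'$ and $N_c(u)=x$ by telescoping products, $\sigma_b^p=1$ is immediate, and $\sigma_a\sigma_c=\sigma_c\sigma_a$ reduces exactly to the key identity. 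The relations internal to the top-left and bottom-right $U_3$-subalgebras follow from the $U_3$-structures on $(F_{a,b})_x$ and $(F_{b,c})_{x'}$ established in \Cref{uu3}. A direct commutator calculation gives $\tau_{ab}(s^{1/p})=\tau_{bc}(s^{1/p})=s^{1/p}$ and $\rho(s^{1/p})=\zeta^{-1}s^{1/p}$, whence $L^{Z_4}\simeq K$ as $\cl{U}_4$-algebras.

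For the implication $(\Rightarrow)$, suppose $L$ is a Galois $U_4$-algebra with $L^{Z_4}\simeq K$. By Kummer theory we may write $L \simeq K_s$ for some $s\in K^\times$ with $\rho(s^{1/p})=\zeta^{-1}s^{1/p}$, and after replacing $s$ by $s t^p$ for an appropriate $t\in K^\times$ (given by Hilbert 90 for the cyclic action of $\sigma_b$ on $K^\times$), we may arrange $\sigma_b(s^{1/p})=s^{1/p}$. Define
\[
u\coloneqq x^{1/p} s^{1/p}/\sigma_c(s^{1/p}),\qquad u'\coloneqq (x')^{1/p} s^{1/p}/\sigma_a(s^{1/p}).
\]
I would verify $u'\in F_{a,c}^\times$ by checking fixation by $\rho$ (hence in $K$, using centrality of $\rho$ in $U_4$), by $\tau_{ab}$ and $\tau_{bc}$ (hence in $F_{a,b,c}$, using $[\sigma_a,\tau_{bc}]=\rho$, which supplies exactly the $\zeta$ needed to cancel the $\tau_{bc}$-action on $(x')^{1/p}$), and by $\sigma_b$ (using $[\sigma_a,\sigma_b]=\tau_{ab}$ together with $\tau_{ab}(s^{1/p})=s^{1/p}$); similarly for $u$. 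The norm relation $N_a(u')=x'$ follows from a telescoping product using $\sigma_a((x')^{1/p})=(x')^{1/p}$ and $\sigma_a^p=1$; and expanding $\sigma_a(u)/u$ using $\sigma_a\sigma_c=\sigma_c\sigma_a$ in $U_4$ yields $\alpha(\sigma_a-1)u=\gamma(\sigma_c-1)u'$. The main obstacle in both directions is not conceptual but rather the careful bookkeeping of how each of the six generators of $U_4$ acts on each of the six $p$-th roots $a^{1/p}, b^{1/p}, c^{1/p}, x^{1/p}, (x')^{1/p}, s^{1/p}$; the identity $\alpha(\sigma_a-1)u=\gamma(\sigma_c-1)u'$ is the only nontrivial input beyond Hilbert 90 and the structure theory from \Cref{uu3}.
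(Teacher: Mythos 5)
Your strategy is the same as the paper's: the same elements $u=x^{1/p}/\bigl((\sigma_c-1)s^{1/p}\bigr)$ and $u'=(x')^{1/p}/\bigl((\sigma_a-1)s^{1/p}\bigr)$, the same reading of $\alpha\cdot(\sigma_a-1)u=\gamma\cdot(\sigma_c-1)u'$ as the commutativity constraint coming from $[\sigma_a,\sigma_c]=1$ in $U_4$, and the same application of Hilbert's Theorem 90 for $F_{a,c}/F$ in the converse direction. The converse implication is correct as you set it up and matches the paper essentially verbatim.

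The forward implication, however, has a gap in the normalization of the Kummer generator $s$. You only arrange $(\rho-1)s^{1/p}=\zeta^{-1}$ and $(\sigma_b-1)s^{1/p}=1$, but your verification that $u'\in F_{a,c}^\times$ explicitly invokes $\tau_{ab}(s^{1/p})=s^{1/p}$ (in the $\sigma_b$-fixation step) and implicitly needs $(\sigma_a-1)\epsilon=1$ for $\epsilon\coloneqq(\tau_{bc}-1)s^{1/p}\in K^\times$ (in the $\tau_{bc}$-fixation step: the commutator $[\sigma_a,\tau_{bc}]=\rho$ does cancel the $\zeta$ coming from $(x')^{1/p}$, but a direct computation gives $\tau_{bc}(u')=u'\cdot\bigl((\sigma_a-1)\epsilon\bigr)^{-1}$, and $\epsilon$ need not be a root of unity). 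Neither property follows from your two normalizations. Concretely, if $s$ is a valid choice then so is $sx$ with $(sx)^{1/p}\coloneqq s^{1/p}x^{1/p}$ (both of your conditions are preserved since $(\rho-1)x^{1/p}=(\sigma_b-1)x^{1/p}=1$), yet the resulting $u'$ gets multiplied by $x^{1/p}/\sigma_a(x^{1/p})=\alpha/b^{1/p}$, which does not lie in $F_{a,c}$ when $a,b,c$ are independent modulo $p$-th powers; the failure surfaces exactly at your $\sigma_b$-check, where $\tau_{ab}(s^{1/p})=s^{1/p}$ is false for $sx$. The repair is what the paper does: since $\on{Gal}(L/F_{a,b,c})=Q_4=\langle\tau_{ab},\tau_{bc},\rho\rangle\simeq(\Z/p\Z)^3$, choose $s\in F_{a,b,c}^\times$ dual to $\rho$, i.e.\ with $(\tau_{ab}-1)s^{1/p}=(\tau_{bc}-1)s^{1/p}=1$ and $(\rho-1)s^{1/p}=\zeta^{-1}$, and then carry out the $\sigma_b$-normalization with a correction $q\in F_{a,b,c}^\times$ supplied by Hilbert's Theorem 90 for $F_{a,b,c}/F_{a,c}$, so that the $Q_4$-normalization is preserved ($Q_4$ acts trivially on $F_{a,b,c}$). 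Your correction $t$ is taken only in $K^\times$, which could likewise destroy the $\tau$-normalizations even if you had imposed them first.
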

	
	\begin{proof}
		Suppose that $K=(F_{a,b,c})_{x,x'}$, with $\cl{U}_4$-action determined by (\ref{uu4-1})-(\ref{uu4-4}). Let $L$ be a Galois $U_4$-algebra over $F$ be such that $L^{Z_4}=K$, and let  $y\in K^\times$ be such that $L=K_y$. 
		
		We have  $\on{Gal}(L/F_{a,b,c})=Q_4=\ang{\tau_{ab}, \tau_{bc},\rho}\simeq (\Z/p\Z)^3$, and hence one may choose $y$ in $F_{a,b,c}^\times$ and such that
		\[
		(\tau_{ab}-1)y^{1/p}=1,\qquad (\tau_{bc}-1)y^{1/p}=1,\qquad (\rho-1)y^{1/p}=\zeta^{-1}.
		\]
		The element $\sigma_b$ commutes with $\tau_{ab}, \tau_{bc}$ and $\rho$. Hence
		\[
		\tau_{ab}(\sigma_b-1)(y^{1/p})=(\sigma_b-1)\tau_{ab}(y^{1/p})=(\sigma_b-1)(y^{1/p}).
		\]
		Similarly
		\[
		\tau_{bc}(\sigma_b-1)(y^{1/p})=(\sigma_b-1)(y^{1/p})\]
		and \[\rho(\sigma_b-1)(y^{1/p})=(\sigma_b-1)(\zeta\cdot y^{1/p})=(\sigma_b-1)(y^{1/p}).\]
		It follows that $(\sigma_b-1)(y^{1/p})\in F_{a,b,c}^\times$. By Hilbert's Theorem 90, applied to
		$F_{a,b,c}/F_{a,c}$, there is $q\in F_{a,b,c}^\times$ such that
		$(\sigma_b-1)(y^{1/p})=(\sigma_b-1)q$. Replacing $y$ by $y/ q^p$, we may assume that $\sigma_b(y^{1/p})=y^{1/p}$.
		In particular, $y\in F_{a,c}^\times$. We have:
		\begin{align*}
			\rho(\sigma_a-1) (y^{1/p})= &\ (\sigma_a -1)\rho(y^{1/p})=(\sigma_a -1)(\zeta^{-1}\cdot y^{1/p})=(\sigma_a -1)(y^{1/p}), \\
			\sigma_b(\sigma_a-1) (y^{1/p})= &\ (\sigma_a\sigma_b{\tau_{ab}}^{-1} -\sigma_b)(y^{1/p})=(\sigma_a-1) (y^{1/p}),\\
			\tau_{ab}(\sigma_a-1) (y^{1/p})= &\ (\sigma_a -1)\tau_{ab}(y^{1/p})=(\sigma_a -1)(y^{1/p}),\\
			\tau_{bc}(\sigma_a-1) (y^{1/p})= &\ (\rho^{-1} \sigma_a -1)\tau_{bc}(y^{1/p})=( \sigma_a \rho^{-1} -1)(y^{1/p})=\zeta\cdot (\sigma_a-1) (y^{1/p}).
		\end{align*}
		By (\ref{uu4-3})-(\ref{uu4-4}), analogous identities are satisfied by $(x')^{1/p}$: 
		\[(\rho-1)(x')^{1/p}=(\sigma_b-1)(x')^{1/p}=(\tau_{ab}-1)(x')^{1/p}=1,\qquad (\tau_{bc}-1)(x')^{1/p}=\zeta.\]
		Therefore
		\[
		(\sigma_a-1) (y^{1/p})=\frac{(x')^{1/p}}{u'}
		\]
		for some $u'\in F_{a,c}^\times$. In particular, $x'= N_a(u')$. A similar computation shows that
		\[
		(\sigma_c-1) (y^{1/p})=\frac{x^{1/p}}{u}
		\]
		for some $u\in F_{a,c}^\times$. In particular, $x= N_c(u)$. In addition,
		\[
		\frac{b^{1/p}}{\alpha}=(\sigma_a-1) (x^{1/p})=(\sigma_a-1) [u\cdot (\sigma_c-1)(y^{1/p})],
		\]
		\[
		\frac{b^{1/p}}{\gamma}=(\sigma_c-1) ((x')^{1/p})=(\sigma_c-1) [u'\cdot (\sigma_a-1)(y^{1/p})].
		\]
		Therefore
		\[\alpha\cdot (\sigma_a-1)u=\gamma\cdot(\sigma_c-1)u'.\]
		
		Conversely, suppose given $u,u'\in F_{a,c}^\times$ such that
		\[\alpha\cdot(\sigma_a-1)u=\gamma\cdot(\sigma_c-1)u',\qquad x=N_c(u),\qquad x'=N_a(u').\]
		Then
		\[
		(\sigma_a-1)x=(\sigma_a-1)N_c(u)=N_c(\sigma_a-1)u=N_c\left(\frac{\gamma}{\alpha}\right)=\frac{b}{\alpha^p},
		\]
		\[
		(\sigma_c-1)x'=(\sigma_c-1)N_a(u')=N_a(\sigma_c-1)u'=N_a\left(\frac{\alpha}{\gamma}\right)=\frac{b}{\gamma^p}.
		\]
		We have
		\[
		N_c \left(\frac{x}{u^p}\right)=\frac{N_c(x)}{N_c(u^p)}=\frac{x^p}{x^p}=1,
		\]
		\[
		N_a \left(\frac{x'}{(u')^p}\right)=\frac{N_a(x')}{N_a((u')^p)}=\frac{(x')^p}{(x')^p}=1,
		\]
		\[
		(\sigma_a-1)\left(\frac{x}{u^p}\right)=\frac{b}{\alpha^p\cdot (\sigma_a-1)u^p}=
		\frac{b}{\gamma^p\cdot (\sigma_c-1)(u')^p}=(\sigma_c-1)\left(\frac{x'}{(u')^p}\right).
		\]
		By Hilbert's Theorem 90 applied to $F_{a,c}/F$, there is $y\in F_{a,c}^\times$ such that
		\[
		(\sigma_a-1)y=\frac{x'}{(u')^p}  \quad\text{and}\quad  (\sigma_c-1)y=\frac{x}{u^p}.
		\]
		We consider the \'etale $F$-algebra $L\coloneqq K_y$ and make it into a Galois $U_4$-algebra such that $L^{Z_4}=K$. It suffices to describe the $U_4$-action on $y^{1/p}$. We set:
		\[
		(\sigma_a-1)(y^{1/p})=\frac{(x')^{1/p}}{u'},\quad (\sigma_b-1)(y^{1/p})=1,\quad   (\sigma_c-1)(y^{1/p})=\frac{x^{1/p}}{u},
		\]
		One checks that this definition is compatible with the relations (\ref{relation0})-(\ref{relation3}), and hence that it makes $L$ into a Galois $U_4$-algebra such that $L^{Z_4}=K$.
	\end{proof}
	
	We use \Cref{uu4} to give an alternative proof for the Massey Vanishing Conjecture for $n=3$ and arbitrary $p$. 
	
	\begin{prop}\label{mvc-3}
		Let $p$ be a prime, let $F$ be a field, and let $\chi_1,\chi_2,\chi_3\in H^1(F,\Z/p\Z)$. The following are equivalent.
		\begin{enumerate}
			\item We have $\chi_1\cup\chi_2=\chi_2\cup\chi_3=0$ in $H^2(F,\Z/p\Z)$.
			\item The Massey product $\ang{\chi_1,\chi_2,\chi_3}\subset H^2(F,\Z/p\Z)$ is defined. 
			\item The Massey product $\ang{\chi_1,\chi_2,\chi_3}\subset H^2(F,\Z/p\Z)$ vanishes.
		\end{enumerate}
	\end{prop}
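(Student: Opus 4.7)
The implications $(3) \Rightarrow (2) \Rightarrow (1)$ are immediate from the definitions, so the substance lies in $(1) \Rightarrow (3)$. The plan is first to reduce to the case when $\on{char}(F)\neq p$ and $\mu_p \subset F$, by a standard prime-to-$p$ base change to $F(\mu_p)$ combined with a lifting argument which recovers a homomorphism $\Gamma_F\to U_4$ from one on $\Gamma_{F(\mu_p)}$, using that $U_4$ is a $p$-group and that $[F(\mu_p):F]$ is prime to $p$. In this setting, Kummer theory lets me write $\chi_i = \chi_{a_i}$; setting $(a,b,c) \coloneqq (a_1,a_2,a_3)$, the hypothesis becomes $(a,b)=(b,c)=0$ in $\on{Br}(F)$, and \Cref{cup-norm} furnishes $\alpha\in F_a^\times$ and $\gamma\in F_c^\times$ with $N_a(\alpha)=N_c(\gamma)=b$.

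By \Cref{dwyer-cor}, proving that $\ang{\chi_a,\chi_b,\chi_c}$ vanishes is equivalent to constructing a Galois $U_4$-algebra $L$ over $F$ with $L^{Q_4}\simeq F_{a,b,c}$ as $(\Z/p\Z)^3$-algebras, and \Cref{uu4} translates this in turn into producing $u,u'\in F_{a,c}^\times$ satisfying
\[
\alpha\cdot(\sigma_a-1)u=\gamma\cdot(\sigma_c-1)u'.
\]
The entire proof thus reduces to exhibiting such a pair $(u,u')$.

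For the construction I introduce the ``diagonal'' element $\tau \coloneqq \sigma_a\sigma_c^{-1}$, which has order $p$ in $G = \on{Gal}(F_{a,c}/F)$, together with its fixed subfield $E \coloneqq F_{a,c}^{\ang\tau}$. Since $\sigma_c$ fixes $\alpha$ and $\sigma_a$ fixes $\gamma$, one computes $\tau^i(\alpha) = \sigma_a^i(\alpha)$ and $\tau^i(\gamma) = \sigma_c^{-i}(\gamma)$, whence
\[
N_{F_{a,c}/E}(\alpha) = N_a(\alpha) = b = N_c(\gamma) = N_{F_{a,c}/E}(\gamma),
\]
so $N_{F_{a,c}/E}(\gamma/\alpha) = 1$. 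Hilbert 90 applied to the cyclic degree-$p$ extension $F_{a,c}/E$ then yields $\beta' \in F_{a,c}^\times$ with $\tau(\beta')/\beta' = \gamma/\alpha$. Setting $\beta \coloneqq \sigma_c^{-1}(\beta')$, $u \coloneqq \beta/\gamma$, and $u' \coloneqq \beta/\alpha$, a short computation using that $\sigma_a$ fixes $\gamma$ and $\sigma_c$ fixes $\alpha$ gives $\alpha(\sigma_a-1)u = \alpha\sigma_a(\beta)/\beta$ and $\gamma(\sigma_c-1)u' = \gamma\sigma_c(\beta)/\beta$, and the equality of these two follows from the identity $\sigma_c^{-1}\sigma_a(\beta')/\beta' = \tau(\beta')/\beta' = \gamma/\alpha$ together with the commutativity of $\sigma_a$ and $\sigma_c$.

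The main obstacle is spotting the correct cyclic subgroup to which to apply Hilbert 90: once one recognizes that $\alpha$ and $\gamma$ are norm-witnesses for the same element $b\in F^\times$ relative to two different cyclic subgroups of $G$, the diagonal $\ang\tau$ is the unique cyclic subgroup of $G$ whose associated norm simultaneously evaluates both witnesses, making the Hilbert 90 hypothesis automatic and allowing the construction to go through.
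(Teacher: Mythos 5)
Your proof of $(1)\Rightarrow(3)$ is correct and follows essentially the same route as the paper: reduce to $\mu_p\subset F$ by a restriction--corestriction argument on the lifting obstruction, translate the problem via Dwyer's theorem and \Cref{uu4} into producing $u,u'\in F_{a,c}^\times$ with $\alpha\cdot(\sigma_a-1)u=\gamma\cdot(\sigma_c-1)u'$, and obtain these from Hilbert 90 applied to a diagonal cyclic subgroup of $\on{Gal}(F_{a,c}/F)$ (the paper uses $\sigma_a\sigma_c$ where you use $\sigma_a\sigma_c^{-1}$; the computations match). The one case your reduction does not cover is $\on{char}(F)=p$, where $F(\mu_p)=F$ and Kummer theory is unavailable; the paper disposes of it separately by noting that $\on{cd}(F)\leq 1$ forces the obstruction class in $H^2(F,Q_4)$ to vanish.
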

	
	\begin{proof}
		It is clear that (3) implies (2) and that (2) implies (1). We now prove that (1) implies (3). The first part of the proof is a reduction to the case when $\on{char}(F)\neq p$ and $F$ contains a primitive $p$-th root of unity, and follows \cite[Proposition 4.14]{minac2016triple}.
		
		Consider the short exact sequence
		\begin{equation}\label{q4-u4}1\to Q_4\to U_4\to (\Z/p\Z)^3\to 1,\end{equation}
		where the map $U_4\to (\Z/p\Z)^3$ comes from (\ref{central-exact-seq}). Recall from the paragraph preceding \Cref{uu4} that the group $Q_4$ is abelian. Therefore, the homomorphism $\chi\coloneqq(\chi_1,\chi_2,\chi_3)\colon \Gamma_F \to (\Z/p\Z)^3$ induces a pullback map \[H^2((\Z/p\Z)^3, Q_4)\to H^2(F, Q_4).\] We let $A\in H^2(F, Q_4)$ be the image of the class of (\ref{q4-u4}) under this map. By \Cref{dwyer}, for every finite extension $F'/F$ the Massey product $\ang{\chi_1,\chi_2,\chi_3}$ vanishes over $F'$ if and only if the restriction of $\chi$ to $\Gamma_{F'}$ lifts to $U_4$, and this happens if and only if $A$ restricts to zero in $H^2(F', Q_4)$. 
		
		When $\on{char}(F)=p$, we have $\on{cd}(F)\leq 1$ by \cite[\S 2.2, Proposition 3]{serre1997galois}. Therefore $H^2(F,Q_4)=0$ and hence $A=0$. Thus (1) implies (3) when $\on{char}(F)=p$. 
		
		Suppose that $\on{char}(F)\neq p$. There exists an extension $F'/F$ of prime-to-$p$ degree such that $F'$ contains a primitive $p$-th root of $1$. If (1) implies (3) for $F'$, then $A$ restricts to zero in $H^2(F', Q_4)$. By a restriction-corestriction argument, we deduce that $A$ vanishes, that is, (1) implies (3) for $F$. We may thus assume that $F$ contains a primitive $p$-th root of unity $\zeta$. 
		
		Let $a,b,c\in F^\times$ be such that $\chi_a=\chi_1$, $\chi_b=\chi_2$ and $\chi_c=\chi_3$ in $H^1(F,\Z/p\Z)$. Since $(a,b)=(b,c)=0$ in $\on{Br}(F)$, there exists $\alpha\in F_a^\times$ and $\gamma\in F_c^\times$ such that $N_a(\alpha)=N_c(\gamma)=b$. Since $N_{ac}(\gamma/\alpha)=N_c(\gamma)/N_a(\alpha)=1$ in $F_{ac}^\times$, by Hilbert's Theorem 90 there exists $t\in F_{a,c}^\times$ such that $\gamma/\alpha = (\sigma_a\sigma_c - 1)t$. Define $u,u'\in F_{a,c}^\times$ by $u\coloneqq \sigma_c(t)$ and $u'\coloneqq t^{-1}$. Then
		\[\alpha\cdot(\sigma_a-1)u=\alpha\cdot(\sigma_a\sigma_c-\sigma_c)t=
		\alpha\cdot(\sigma_a\sigma_c-1)t\cdot(\sigma_c-1)t^{-1}=\gamma\cdot(\sigma_c-1)u'.\]
		Let $x\coloneqq N_c(u)\in F_a^\times$ and $x'\coloneqq N_a(u')\in F_c^\times$. Since $\sigma_a\sigma_c=\sigma_c\sigma_a$ on $F_{a,c}^\times$, we have 
		\[(\sigma_a-1)x=N_c((\sigma_a-1)u)=N_c((\sigma_c-1)u'\cdot(\gamma/\alpha))=N_c(\gamma)/N_c(\alpha)=b/\alpha^p.\]
		Similarly, $(\sigma_c-1)x'=b/\gamma^p$. Therefore $x,x'$ satisfy (\ref{uu4-0}). Let $K\coloneqq (F_{a,b,c})_{x,x'}$ be the Galois $\cl{U}_4$-algebra over $F$, with the $\cl{U}_4$-action given by (\ref{uu4-1})-(\ref{uu4-4}). By \Cref{uu4-equiv}, there exists a Galois $U_4$-algebra $L$ over $F$ such that $L^{Z_4}\simeq (F_{a,b,c})_{x,x'}$ as $\cl{U}_4$-algebras. In particular, $L^{Q_4}\simeq F_{a,b,c}$ as $(\Z/p\Z)^3$-algebras. By \Cref{dwyer-cor}, we conclude that $\ang{a,b,c}$ vanishes.
	\end{proof}

	\subsection{Galois \texorpdfstring{$\cl{U}_5$}{U5}-algebras}\label{uu5-sec}
	Let $a,b,c,d\in F^\times$. We write $(\Z/p\Z)^4=\ang{\sigma_a,\sigma_b,\sigma_c,\sigma_d}$ and regard $F_{a,b,c,d}$ as a Galois $(\Z/p\Z)^4$-algebra over $F$ as in \Cref{kummer-sub}.
	
	\begin{prop}\label{uu5}
		Let $a,b,c,d \in F^\times$ be such that $(a,b)=(b,c)=(c,d)=0$ in $\on{Br}(F)$. Then the Massey product $\ang{a,b,c,d}$ is defined if and only if there exist $u\in F_{a,c}^\times$,
		$v\in F_{b,d}^\times$ and $w\in F_{b,c}^\times$ such that
		\[N_a(u)\cdot N_d(v) = w^p,\qquad (\sigma_b - 1)(\sigma_c - 1)w = \zeta.\]
	\end{prop}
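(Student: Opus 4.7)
The plan is to apply \Cref{dwyer-cor} to reduce the statement to the construction (respectively, extraction) of a Galois $\cl{U}_5$-algebra $K/F$ with $K^{\cl{Q}_5}\simeq F_{a,b,c,d}$ as $(\Z/p\Z)^4$-algebras. The cartesian square \eqref{phi-phi'} for $n+1=5$ identifies $\cl{U}_5$ with $U_4\times_{U_3}U_4$, one $U_4$ acting on $(a,b,c)$, the other on $(b,c,d)$, and the common $U_3$ on $(b,c)$. By \Cref{pushout-g-algebras} the existence of such $K$ is equivalent to the joint existence of Galois $U_4$-algebras $L_1$ on $(a,b,c)$ and $L_2$ on $(b,c,d)$ together with an isomorphism between the Galois $U_3$-algebras on $(b,c)$ obtained by taking the appropriate invariants.

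For the forward direction, \Cref{uu4} applied to $L_1$ produces $u_1,u'_1\in F_{a,c}^\times$ together with $\alpha\in F_a^\times$ and $\gamma\in F_c^\times$ of norm $b$ satisfying $\alpha(\sigma_a-1)u_1=\gamma(\sigma_c-1)u'_1$, and \Cref{uu4} applied to $L_2$ gives $v_2,v'_2\in F_{b,d}^\times$ with $\beta\in F_b^\times$ and $\delta\in F_d^\times$ of norm $c$ satisfying $\beta(\sigma_b-1)v_2=\delta(\sigma_d-1)v'_2$. A comparison of \eqref{uu4-1}--\eqref{uu4-4} with \eqref{u3-algebra-1}--\eqref{u3-algebra-2} shows that the $U_3$-algebra on $(b,c)$ arising from $L_1$ is of ``$y$-type'' with Kummer parameter $N_a(u'_1)$ and normalization $\gamma$, while that from $L_2$ is of ``$x$-type'' with parameter $N_d(v_2)$ and normalization $\beta$. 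Applying \Cref{uu3}(3) with those as fixed normalizations yields $w\in F_{b,c}^\times$ with $w^p=N_d(v_2)\cdot N_a(u'_1)$ and $(\sigma_b-1)(\sigma_c-1)w=\zeta$; the triple $(u'_1,v_2,w)$ is as required.

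The converse is the main obstacle. Starting from $u,v,w$, I would define the normalizations $\gamma\coloneqq b^{1/p}/(\sigma_c-1)w$ and $\beta\coloneqq c^{1/p}/(\sigma_b-1)w$; the identity $(\sigma_b-1)(\sigma_c-1)w=\zeta$ is precisely what places $\gamma$ in $F_c^\times$ and $\beta$ in $F_b^\times$, and a short calculation using that $(\sigma_c-1)w$ and $(\sigma_b-1)w$ are coboundaries (hence have trivial $N_c$ and $N_b$) yields $N_c(\gamma)=b$ and $N_b(\beta)=c$. Applying $\sigma_c-1$ to $N_a(u)\cdot N_d(v)=w^p$, and using that $\sigma_c$ fixes $N_d(v)\in F_b^\times$, gives $(\sigma_c-1)N_a(u)=((\sigma_c-1)w)^p$; so for any $\alpha\in F_a^\times$ with $N_a(\alpha)=b$, the element $(\gamma/\alpha)(\sigma_c-1)u\in F_{a,c}^\times$ has $N_a$ equal to $(\gamma\cdot(\sigma_c-1)w)^p/b=1$ by the very defining identity $\gamma(\sigma_c-1)w=b^{1/p}$. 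Hilbert's Theorem 90 for $F_{a,c}/F_c$ then supplies $u_1\in F_{a,c}^\times$ with $\alpha(\sigma_a-1)u_1=\gamma(\sigma_c-1)u$, and \Cref{uu4} produces the required $L_1$ with $x'_1=N_a(u)$; the symmetric argument, using any $\delta\in F_d^\times$ of norm $c$, produces $L_2$ with $x_2=N_d(v)$.

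The $U_3$-algebras on $(b,c)$ coming from these $L_1$ and $L_2$ use precisely the normalizations $\gamma,\beta$ constructed above, so \Cref{uu3}(3) reduces their identification to finding $\tilde w\in F_{b,c}^\times$ with $\tilde w^p=N_a(u)\cdot N_d(v)$ and $(\sigma_b-1)(\sigma_c-1)\tilde w=\zeta$, and the given $w$ itself works. Gluing $L_1$ and $L_2$ via \Cref{pushout-g-algebras}(2) over the cartesian square \eqref{phi-phi'} yields the desired Galois $\cl{U}_5$-algebra, whence \Cref{dwyer-cor} concludes that $\ang{a,b,c,d}$ is defined.
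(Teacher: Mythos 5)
Your proof is correct and takes essentially the same route as the paper: reduce via \Cref{dwyer-cor} and the cartesian square (\ref{phi-phi'}) to gluing two Galois $U_4$-algebras along their common Galois $U_3$-algebra on $(b,c)$, apply \Cref{uu4} to each factor and \Cref{uu3}(3) to the overlap, and in the converse extract the normalizations $\gamma=b^{1/p}/(\sigma_c-1)w$, $\beta=c^{1/p}/(\sigma_b-1)w$ and use Hilbert's Theorem 90 to produce the auxiliary elements required by \Cref{uu4}. This matches the paper's argument step for step, with your verification of the norm conditions spelled out slightly more explicitly.
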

	
	\begin{proof}
		Denote by $U_4^+$ and $U_4^-$ the top-left and bottom-right $4\times 4$ corners of $U_5$, respectively, and let $S\coloneqq U_4^+\cap U_4^-$ be the middle subgroup $U_3$. Let $Q_4^+$ and $Q_4^-$ be the kernel of the map $U_4^+\to (\Z/p\Z)^3$ and $U_4^-\to (\Z/p\Z)^3$, respectively, and let $P_4^+$ and $P_4^-$ be the kernel of the maps $U_4^+\to U_3$ and $U_4^-\to U_3$, respectively.

		Suppose $\langle a,b,c,d \rangle$ is defined. By \Cref{dwyer-cor}, there exists a $\overline{U}_5$-algebra $L$ such that $L^{\cl{Q}_5}\simeq F_{a,b,c,d}$ as $(\Z/p\Z)^4$-algebras. Using \Cref{cup-norm}, we fix $\alpha\in F_a^\times$ and $\gamma\in F_c^\times$ such that
		$N_a(\alpha)=b$ and $N_c(\gamma)=b$. By \Cref{uu4}, there exist $u,u'\in F_{a,c}^\times$ such that, letting $x'\coloneqq N_c(u')$ and $x\coloneqq N_a(u)$, the $\cl{U}_4^+$-algebra $K_1$ induced by $L$ is isomorphic to the $\cl{U}_4^+$-algebra $(F_{a,b,c})_{x',x}$, where $\cl{U}_4^+$ acts via (\ref{uu4-1})-(\ref{uu4-4}), and where the roles of $x$ and $x'$ have been switched.
		
		Similarly, there exist $v,v'\in F_{b,d}^\times$ such that, letting $z\coloneqq N_d(v)$ and $z'\coloneqq N_b(v')$, the $\cl{U}^-_4$-algebra $K_2$ induced by $L$ is isomorphic to $(F_{b,c,d})_{z,z'}$. Since the $U_3$-algebras $(K_1)^{P_4^+}$ and $(K_2)^{P_4^-}$ are equal, by \Cref{uu3}(3) there exists $w\in F_{b,c}^\times$ such that 
		\[
		N_a(u)\cdot N_d(v) = xz = w^p,\qquad (\sigma_b -1)(\sigma_c -1)w = \zeta.
		\]
		Conversely, let $u\in F_{a,c}^\times$, $v\in F_{b,d}^\times$, and $w\in F_{b,c}^\times$ be such that
		\[
		N_a(u)\cdot N_d(v) = w^p,\qquad (\sigma_b -1)(\sigma_c -1)w = \zeta.\]
		By \Cref{cup-norm}, there exist $\alpha\in F_a^\times$ and $\delta\in F_d^\times$ such that $N_a(\alpha)=b$ and $N_d(\delta)=c$. We may write
		\[
		(\sigma_b - 1)w = \frac{c^{1/p}}{\beta},\qquad (\sigma_c - 1)w = \frac{b^{1/p}}{\gamma}.
		\]
		For some $\beta\in F_b^\times$ and $\gamma\in F_c^\times$. We have
		\[
		N_a((\sigma_c - 1)u\cdot (\gamma/\alpha)) = (\sigma_c - 1)N_a(u)\cdot N_a(\gamma/\alpha)= (\sigma_c - 1)w^p \cdot (\gamma^p/b) = 1.
		\]
		By Hilbert's Theorem 90, there is $u'\in F_{a,c}^\times$ such that
		\[
		\alpha\cdot (\sigma_a - 1)u' = \gamma\cdot (\sigma_c - 1)u.
		\]
		By \Cref{uu4}, we obtain a Galois $U_4^+$-algebra $K_1$ over $F$ with the property that $(K_1)^{Q_4^+}\simeq F_{a,b,c}$ as $(\Z/p\Z)^3$-algebras. Similarly, we get a Galois $U_4^-$-algebra over $F$ such that $(K_2)^{Q_4^-}\simeq F_{b,c,d}$ as $(\Z/p\Z)^3$-algebras. Since $N_a(u)\cdot N_d(v) = w^p$,  by \Cref{uu3}(3) the $U_3$-algebras $(K_1)^{P_4^+}$ and $(K_2)^{P_4^-}$ are isomorphic. Now \Cref{pushout-g-algebras} applied to the cartesian square (\ref{phi-phi'}) for $n=4$ yields a $\cl{U}_5$-Galois algebra $L$ such that $L^{Q_5}\simeq F_{a,b,c,d}$ as $(\Z/p\Z)^4$-algebras. By \Cref{dwyer-cor}, this implies that $\ang{a,b,c,d}$ is defined.
	\end{proof}

	\begin{lemma}\label{b-1-c-1}
		Let $b,c\in F^\times$ and $w\in F_{b,c}^{\times}$. Then $(\sigma_b-1)(\sigma_c-1)w=1$ if and only if there exist $w_b\in F_b^\times$ and $w_c\in F_c^\times$ such that $w=w_bw_c$ in $F_{b,c}^\times$.
	\end{lemma}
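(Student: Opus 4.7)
The ``if'' direction is a routine verification: if $w=w_bw_c$ with $w_b\in F_b^\times$ and $w_c\in F_c^\times$, then since $\sigma_c$ fixes $F_b$ pointwise, we have $(\sigma_c-1)w=(\sigma_c-1)w_c$, which still lies in $F_c^\times$; since $\sigma_b$ fixes $F_c$ pointwise, a further application of $\sigma_b-1$ gives $1$.

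For the ``only if'' direction, my plan is to use Hilbert's Theorem 90 twice, once in $F_{b,c}/F_b$ and once in $F_c/F$. Set $v\coloneqq (\sigma_c-1)w\in F_{b,c}^\times$. The hypothesis $(\sigma_b-1)v=1$ says that $v$ lies in the $\sigma_b$-fixed subring $F_{b,c}^{\sigma_b}=F_c$, so $v\in F_c^\times$. The key point is to show that $N_{F_c/F}(v)=1$. Viewed in $F_{b,c}^\times$, $v=\sigma_c(w)/w$ is a $(\sigma_c-1)$-coboundary, so the norm $N_{F_{b,c}/F_b}(v)=\prod_{i=0}^{p-1}\sigma_c^i(v)$ is trivial. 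Since $v\in F_c$ and $\sigma_b$ commutes with $\sigma_c$, each $\sigma_c^i(v)\in F_c$ is fixed by $\sigma_b$, so the product $\prod_i\sigma_c^i(v)$ lies in $F_c\cap F_b^{\phantom{\times}}=F$ and coincides with $N_{F_c/F}(v)$. Hence $N_{F_c/F}(v)=1$.

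Now the second Hilbert 90, applied to the Galois $\Z/p\Z$-algebra $F_c/F$, produces an element $w_c\in F_c^\times$ with $(\sigma_c-1)w_c=v=(\sigma_c-1)w$. Then $\sigma_c(w/w_c)=w/w_c$, so $w_b\coloneqq w/w_c$ lies in $F_{b,c}^{\sigma_c}=F_b^\times$, and $w=w_bw_c$ as required.

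The only mild subtlety is that $F_b$ or $F_c$ may fail to be a field (if $b$ or $c$ is a $p$-th power), but then it is a split product and the norm and Hilbert 90 statements for the Galois $\Z/p\Z$-algebra hold just as well; this is the same situation already handled repeatedly in the preceding results (cf.\ the case analysis in the proof of \Cref{uu3}(3)). No step presents a serious obstacle; the argument is essentially a clean application of the basic formalism of Galois algebras developed in \Cref{kummer-sub}.
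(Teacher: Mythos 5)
Your proof is correct and follows essentially the same route as the paper: the forward direction is the same telescoping/fixed-subalgebra verification, and the converse applies Hilbert's Theorem~90 to the Galois $\Z/p\Z$-algebra $F_c/F$ after observing that $(\sigma_c-1)w$ lies in $F_c^\times$ and has trivial norm. Your extra remarks (the telescoping computation of the norm and the case where $F_b$ or $F_c$ is not a field) merely make explicit what the paper leaves implicit.
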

	
	\begin{proof}
		We have $(\sigma_b-1)(\sigma_c-1)(w_bw_c)=(\sigma_b-1)w_c=1$ for all $w_b\in F_b^\times$ and $w_c\in F_c^\times$. Conversely, if $w\in F_{b,c}^\times$ satisfies $(\sigma_b-1)(\sigma_c-1)w=1$, then $(\sigma_c-1)w\in F_c^\times$ and $N_c((\sigma_c-1)w)=1$, and hence by Hilbert's Theorem 90 there exists $w_c\in F_c^\times$ such that $(\sigma_c-1)w_c=(\sigma_c-1)w$. Letting $w_b\coloneqq w/w_c\in F_{b,c}^\times$, we have \[(\sigma_c-1)w_b=(\sigma_c-1)(w/w_c)=1,\] that is, $w_b\in F_b^{\times}$.
	\end{proof}

	From \Cref{uu5}, we derive the following necessary condition for a fourfold Massey product to be defined.
	
	\begin{prop}\label{condition-fixed-w}
		Let $p$ be a prime, let $F$ be a field of characteristic different from $p$ and containing a primitive $p$-th root of unity $\zeta$, let $a,b,c,d\in F^\times$, and suppose that $\ang{a,b,c,d}$ is defined over $F$. For every $w\in F_{b,c}^\times$ such that $(\sigma_b-1)(\sigma_c-1)w=\zeta$, there exist $u\in F_{a,c}^\times$ and $v\in F_{b,d}^\times$ such that $N_a(u)N_d(v)=w^p$.
	\end{prop}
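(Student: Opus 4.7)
The plan is to use Proposition \ref{uu5} to produce \emph{some} triple $(u_0, v_0, w_0)$ witnessing that $\ang{a,b,c,d}$ is defined, and then to transport this to a solution tailored to the given $w$ by exploiting the freedom described by Lemma \ref{b-1-c-1}. Concretely, since $\ang{a,b,c,d}$ is defined, Proposition \ref{uu5} supplies $u_0 \in F_{a,c}^\times$, $v_0 \in F_{b,d}^\times$, and $w_0 \in F_{b,c}^\times$ such that
\[
N_a(u_0)\, N_d(v_0) = w_0^p, \qquad (\sigma_b-1)(\sigma_c-1)w_0 = \zeta.
\]
The given $w$ satisfies the same $\zeta$-cocycle condition as $w_0$, so the ratio $w/w_0 \in F_{b,c}^\times$ lies in the kernel of $(\sigma_b-1)(\sigma_c-1)$. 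By Lemma \ref{b-1-c-1}, there exist $w_b \in F_b^\times$ and $w_c \in F_c^\times$ with $w/w_0 = w_b w_c$.

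The key observation is that each of $w_b, w_c$ can be absorbed into the respective norm: $w_c$ viewed inside $F_{a,c}$ is fixed by $\sigma_a$, hence $N_a(w_c) = w_c^p$, and symmetrically $N_d(w_b) = w_b^p$. Setting
\[
u \coloneqq u_0 w_c \in F_{a,c}^\times, \qquad v \coloneqq v_0 w_b \in F_{b,d}^\times,
\]
a direct computation yields
\[
N_a(u)\, N_d(v) = N_a(u_0)\, w_c^p \cdot N_d(v_0)\, w_b^p = w_0^p (w_b w_c)^p = w_0^p (w/w_0)^p = w^p,
\]
which is the required identity.

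There is no substantive obstacle in this argument: the content lies entirely in recognizing that the indeterminacy in the choice of $w$ (modulo a fixed $w_0$) is exactly the subgroup $F_b^\times \cdot F_c^\times \subset F_{b,c}^\times$ produced by Lemma \ref{b-1-c-1}, and that this subgroup lifts cleanly through the norms $N_a \colon F_{a,c}^\times \to F_c^\times$ and $N_d \colon F_{b,d}^\times \to F_b^\times$. Once Proposition \ref{uu5} and Lemma \ref{b-1-c-1} are in hand, the proof is a one-line bookkeeping calculation.
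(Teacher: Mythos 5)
Your proposal is correct and is essentially identical to the paper's own proof: both invoke Proposition \ref{uu5} to obtain a witness $(u_0,v_0,w_0)$, apply Lemma \ref{b-1-c-1} to the ratio of $w$ and $w_0$, and absorb the factors $w_b,w_c$ into the norms via $N_a(w_c)=w_c^p$ and $N_d(w_b)=w_b^p$. (Your bookkeeping with $w/w_0=w_bw_c$ is in fact slightly cleaner than the paper's, which decomposes $w_0=ww_bw_c$ and consequently carries a harmless inversion slip in its final display.)
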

	
	\begin{proof}
		By \Cref{uu5}, there exist $u_0\in F_{a,c}^\times$, $v_0\in F_{b,d}^\times$ and $w_0\in F_{a,c}^\times$ such that
		\[N_a(u_0)N_d(v_0) =w_0^p,\qquad (\sigma_b-1)(\sigma_c-1)w_0=\zeta.\]
		We have $(\sigma_b-1)(\sigma_c-1)(w_0/w)=1$. By \Cref{b-1-c-1}, this implies that $w_0=w w_b w_c$, where $w_b\in F_b^\times$ and $w_c\in F_c^\times$. If we define $u=u_0 w_c$ and $v=v_0 w_b$, then \[N_a(u)N_d(v)=N_a(u_0)N_a(w_c)N_d(v_0)N_d(w_b)=w_0^pw_c^pw_b^p=w^p.\qedhere\]
	\end{proof}
	
	\section{A generic variety}\label{generic-variety-section}
	In this section, we let $p$ be a prime number, and we let $F$ be a field of characteristic different from $p$ and containing a primitive $p$-th root of unity $\zeta$. 
	
	Let $b,c\in F^\times$, and let $X$ be the Severi-Brauer variety associated to $(b,c)$ over $F$; see \cite[Chapter 5]{gille2017central}. For every \'etale $F$-algebra $K$, we have $(b,c)=0$ in $\on{Br}(K)$ if and only if $X_K\simeq \P^{p-1}_K$ over $K$. In particular, $X_b\simeq \P^{p-1}_b$ over $F_b$. By \cite[Theorem 5.4.1]{gille2017central}, the central simple algebra $(b,c)$ is split over $F(X)$.
	
	We define the degree map $\deg\colon \on{Pic}(X)\to \Z$ as the composition of the pullback map $\on{Pic}(X)\to \on{Pic}(X_b)\simeq\on{Pic}(\P^{p-1}_b)$ and the degree isomorphism $\on{Pic}(\P^{p-1}_b)\to\Z$. This does not depend on the choice of isomorphism $X_b\simeq \P^{p-1}_b$.
	
	\begin{lemma}\label{construct-w}
		Let $b,c\in F^\times$, let $G_b\coloneqq \on{Gal}(F_b/F)$, and let $X$ be the Severi-Brauer variety of $(b,c)$ over $F$. Let $s_1,\dots,s_p$ be homogeneous coordinates on $\P^{p-1}_F$.
		
		(1) There exists a $G_b$-equivariant isomorphism $X_b\xrightarrow{\sim} \P_b^{p-1}$, where $G_b$ acts on $X_b$ via its action on $F_b$, and on $\P^{p-1}_b$ by
		\[\sigma_b^*(s_1)=cs_p,\qquad \sigma_b^*(s_i)=s_{i-1}\quad (i=2,\dots,p).\]
		
		(2) If $(b,c)\neq 0$ in $\on{Br}(F)$, the image of $\deg\colon \on{Pic}(X)\to\Z$ is equal to $p\Z$.
		
		(3) There exists a rational function
		$w\in F_{b,c}(X)^\times$ such that 
		\[(\sigma_b-1)(\sigma_c-1)w=\zeta\] and \[\on{div}(w)=x-y\qquad \text{in $\on{Div}(X_{b,c})$},\] where $x,y\in (X_{b,c})^{(1)}$ satisfy $\deg(x)=\deg(y)=1$, $\sigma_b(x)=x$ and $\sigma_c(y)=y$.
	\end{lemma}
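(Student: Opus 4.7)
For (1), the plan is to construct $X$ as the variety of minimal right ideals of $(b,c)$ and use the standard splitting over $F_b$, in which, setting $\xi \coloneqq x/b^{1/p}$, the element $\xi$ acts as $\on{diag}(1,\zeta^{-1},\ldots,\zeta^{-(p-1)})$ and $y$ acts as the cyclic permutation matrix with entry $c$ in the upper-right corner. Because $b$ is a $p$-th power in $F_b$, this gives an identification $X_b \simeq \P^{p-1}_b$. Unwinding the Galois action of $\sigma_b$ through the relations $\sigma_b(\xi) = \zeta^{-1}\xi$, $\sigma_b(y) = y$ produces precisely the stated formulas on homogeneous coordinates; the cocycle condition holds because $(\sigma_b^*)^p$ multiplies each $s_i$ by $c$ and hence acts as the identity on $\P^{p-1}$.

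For (2), apply the low-degree Leray exact sequence
\[
0 \to \on{Pic}(X) \to \on{Pic}(X_s)^{\Gamma_F} \to \Br(F)
\]
for $X \to \on{Spec}(F)$. Since $\on{Pic}(X_s) \simeq \Z$ has trivial Galois action and the boundary map sends the positive generator to $[(b,c)]$, which has exact order $p$ by hypothesis, the image of $\on{Pic}(X)\hookrightarrow\Z$ is $p\Z$.

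For (3), the main computational step, set
\[
\ell \coloneqq \Sum_{i=1}^p c^{(i-1)/p} s_i \in F_c[s_1,\ldots,s_p],
\]
and take $x \coloneqq \{\ell = 0\}$, $y \coloneqq \{s_1 = 0\}$ as prime divisors on $X_{b,c} \simeq \P^{p-1}_{b,c}$. A direct substitution using (1) yields $\sigma_b^*(\ell) = c^{1/p}\ell$, so $x$ is $\sigma_b$-stable; since $\sigma_c$ fixes $F_b$ and therefore the coordinates $s_i$, $y$ is $\sigma_c$-stable. Both are hyperplanes, hence of degree $1$. Setting $w \coloneqq \ell/s_1 \in F_{b,c}(X)^\times$ gives $\on{div}(w) = x - y$ in $\on{Div}(X_{b,c})$. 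From $\sigma_c(\ell) = \Sum_{i=1}^p \zeta^{i-1} c^{(i-1)/p} s_i$, a coefficient-by-coefficient comparison yields $\sigma_b(\sigma_c(\ell)) = \zeta c^{1/p}\sigma_c(\ell)$; combined with $\sigma_b(\ell) = c^{1/p}\ell$ and $\sigma_c(s_1) = s_1$, this produces
\[
(\sigma_b - 1)(\sigma_c - 1)w = \frac{\sigma_b(\sigma_c(\ell))/\sigma_b(\ell)}{\sigma_c(\ell)/\ell} = \zeta,
\]
as required.

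The hardest part is the bookkeeping in (1), where the precise form of the $\sigma_b$-action on coordinates must be extracted with the correct normalization so that the factor of $c$ appears in the upper-right slot. Once this is in place, the choice of $\ell$ in (3) is essentially forced: demanding $\sigma_b^*(\ell) = \lambda\ell$ iterates to the norm equation $N_b(\lambda) = c$, solved most simply over $F_{b,c}$ by $\lambda = c^{1/p} \in F_c$.
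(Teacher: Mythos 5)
Your proof is correct and follows essentially the same route as the paper: part (1) via the standard explicit splitting of the cyclic algebra over $F_b$ yielding the permutation cocycle with $c$ in the corner (the paper simply cites Gille--Szamuely, Construction 2.5.1, for this), part (2) via the exact sequence $\on{Pic}(X)\to\on{Pic}(X_s)^{\Gamma_F}\to\Br(F)$ (Lichtenbaum's theorem in the paper), and part (3) via the eigenform $\ell$ for $\sigma_b^*$. Your $\ell=\Sum_i c^{(i-1)/p}s_i$ and $w=\ell/s_1$ differ from the paper's $l=\Sum_i c^{i/p}s_i$ and $w'=l/s_p$ only by a scalar on the linear form and the choice of $\sigma_c$-fixed coordinate hyperplane, so the argument is the same in substance.
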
	
	
	\begin{proof}
		(1) Consider the $1$-cocycle $z\colon G_b\to \on{PGL}_p(F_b)$ given by
		\[\sigma_b\mapsto
		\begin{bmatrix}
			0 & 0 & \dots & 0 & c \\
			1 & 0 & \dots & 0 & 0 \\
			0 & 1 & \dots & 0 & 0 \\
			\vdots & \vdots & \ddots & \vdots & \vdots \\
			0 & 0 & \dots & 1 & 0 \\
		\end{bmatrix}.
		\]
		By \cite[Construction 2.5.1]{gille2017central}, the class $[z]\in H^1(G_b,\on{PGL}_p(F_b))$ coincides with the class of the degree-$p$ central simple algebra over $F$ with Brauer class $(b,c)$, and hence with the class of the associated Severi-Brauer variety $X$. It follows that we have a $G_b$-equivariant isomorphism $X_b\simeq \P^{p-1}_b$, where $G_b$ acts on $X_b$ via its action on $F_b$, and on $\P^{p-1}_b$ via the cocycle $z$. This proves (1). 
		
		(2) By a theorem of Lichtenbaum \cite[Theorem 5.4.10]{gille2017central}, we have an exact sequence
		\[\on{Pic}(X)\xrightarrow{\deg} \Z\xrightarrow{\delta}\Br(F),\]
		where $\delta(1)=(b,c)$. Since $(b,c)$ has exponent $p$, we conclude that the image of $\deg$ is equal to $p\Z$.
		
		(3) Let $G_{b,c}\coloneqq \on{Gal}(F_{b,c}/F)=\ang{\sigma_b,\sigma_c}$. By (1), there is a $G_{b,c}$-equivariant isomorphism $f\colon \P^{p-1}_{b,c}\to X_{b,c}$, where $G_{b,c}$ acts on $X_{b,c}$ via its action on $F_{b,c}$, the action of $\sigma_c$ on $\P^{p-1}_{b,c}$ is trivial and the action of $\sigma_b$ on $\P^{p-1}_{b,c}$ is determined by
		\[\sigma_b^*(s_1)=cs_p,\qquad \sigma_b^*(s_i)=s_{i-1}\quad (i=2,\dots,p).\]
		Consider the linear form $l\coloneqq\Sum_{i=1}^{p} c^{i/p}\cdot s_i$ on $\P^{p-1}_{b,c}$ and set $w'\coloneqq l/s_p\in F_{b,c}(\P^{p-1})^\times$.
		We have $\sigma_b^*(l)=c^{1/p}\cdot l$, and hence $(\sigma_b-1)w'=c^{1/p}\cdot (s_p/s_{p-1})$. It follows that $(\sigma_c-1)(\sigma_b-1)w'=\xi$.
		Let $x',y'\in \on{Div}(\P^{p-1}_{b,c})$ be the classes of linear subspaces of $\P^{p-1}_{b,c}$ given by $l=0$ and $s_p=0$, respectively. Then
		\[\on{div}(w')=x'-y',\qquad \sigma_b(x')=x',\qquad \sigma_c(y')=y'.\]
		Define 
		\[w\coloneqq w'\circ f^{-1}\in F_{b,c}(X)^\times,\qquad x'\coloneqq f_*(x)\in (X_{b,c})^{(1)},\qquad y'\coloneqq f_*(y)\in (X_{b,c})^{(1)}.\]
		Then $w,x,y$ satisfy the conclusion of (3).
	\end{proof} 
	
	\begin{lemma}\label{weil-restrictions-exact}
		Let $a,b,c,d\in F^\times$. The complex of tori
		\[R_{a,c}(\G_{\on{m}})\times R_{b,d}(\G_{\on{m}})\xrightarrow{\varphi}R_{b,c}(\G_{\on{m}})\xrightarrow{\psi} R_{b,c}(\G_{\on{m}}),\]
		where $\varphi(u,v)\coloneqq N_a(u)N_d(v)$ and $\psi(z)=(\sigma_b-1)(\sigma_c-1)z$, is exact.
	\end{lemma}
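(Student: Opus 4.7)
The plan is to verify the two inclusions $\on{im}(\varphi)\subseteq \ker(\psi)$ and $\ker(\psi)\subseteq \on{im}(\varphi)$ separately, as inclusions of fppf sheaves. Since the three groups involved are all smooth $F$-tori, the cokernel sheaf $\ker(\psi)/\on{im}(\varphi)$ is itself smooth, so it suffices to check its triviality on $F_s$-points.

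The containment $\psi\circ\varphi=0$ is immediate: $N_a(u)$ lies in the subtorus $R_c(\G_{\on{m}})\subseteq R_{b,c}(\G_{\on{m}})$, which is fixed by $\sigma_b$, while $N_d(v)$ lies in the $\sigma_c$-fixed subtorus $R_b(\G_{\on{m}})$; both factors are thus annihilated by $(\sigma_b-1)(\sigma_c-1)$.

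For the reverse inclusion, I would fix $p$-th roots $\bar b,\bar c\in F_s$ to obtain an identification $F_s\otimes_F F_{b,c}\cong F_s^{(\Z/p)^2}$ under which $\sigma_b$ and $\sigma_c$ act by translation in the two coordinates. The condition $\psi(z)=1$ on $z=(z_{i,j})\in (F_s^\times)^{(\Z/p)^2}$ then reads as the multiplicative rank-one relation $z_{i+1,j+1}\,z_{i,j}=z_{i+1,j}\,z_{i,j+1}$ for all $i,j$, and a short induction (setting $g(i)\coloneqq z_{i,0}/z_{0,0}$ and $f(j)\coloneqq z_{0,j}$) forces $z_{i,j}=g(i)\,f(j)$. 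Here $g$ and $f$ correspond to elements of $R_b(\G_{\on{m}})(F_s)$ and $R_c(\G_{\on{m}})(F_s)$, i.e.\ of the $\sigma_c$- and $\sigma_b$-fixed subtori of $R_{b,c}(\G_{\on{m}})$. Finally, since on $F_s$-points the norms $N_a\colon R_{a,c}(\G_{\on{m}})\to R_c(\G_{\on{m}})$ and $N_d\colon R_{b,d}(\G_{\on{m}})\to R_b(\G_{\on{m}})$ become surjective coordinate products $(F_s^\times)^{p^2}\to (F_s^\times)^p$, we can lift $f$ and $g$ to $u\in R_{a,c}(\G_{\on{m}})(F_s)$ and $v\in R_{b,d}(\G_{\on{m}})(F_s)$ giving $\varphi(u,v)=N_a(u)N_d(v)=z$. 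No substantive obstacle arises; the argument can alternatively be phrased as the torus-theoretic upgrade of \Cref{b-1-c-1}, which identifies $\ker(\psi)$ with the fppf image of multiplication $R_b(\G_{\on{m}})\times R_c(\G_{\on{m}})\to R_{b,c}(\G_{\on{m}})$, after which surjectivity of $N_a$ and $N_d$ onto these factors closes the argument.
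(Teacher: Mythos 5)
Your proof is correct and follows essentially the same route as the paper's: the paper likewise reduces to identifying $\on{Ker}(\psi)$ with the image of the multiplication map $R_b(\G_{\on{m}})\times R_c(\G_{\on{m}})\to R_{b,c}(\G_{\on{m}})$ (via \Cref{b-1-c-1}) and then concludes from the surjectivity of $N_a$ and $N_d$, exactly as in your closing remark. The only cosmetic difference is that you establish this identification by an explicit coordinate computation over $F_s$ instead of invoking the Hilbert~90 argument of \Cref{b-1-c-1}.
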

	
	\begin{proof}
		By \Cref{b-1-c-1}, we have an exact sequence
		\[R_c(\G_{\on{m}})\times R_b(\G_{\on{m}})\xrightarrow{\varphi'} R_{b,c}(\G_{\on{m}})\xrightarrow{\psi} R_{b,c}(\G_{\on{m}}),\]
		where $\varphi'(x,y)=xy$. The homomorphism $\varphi$ factors as
		\[R_{a,c}(\G_{\on{m}})\times R_{b,d}(\G_{\on{m}})\xrightarrow{N_a\times N_d}R_c(\G_{\on{m}})\times R_b(\G_{\on{m}})\xrightarrow{\varphi'} R_{b,c}(\G_{\on{m}}).\]
		Since the homomorphisms $N_a$ and $N_d$ are surjective, so is $N_a\times N_d$. We conclude that $\on{Im}(\varphi)=\on{Im}(\varphi')=\on{Ker}(\psi)$, as desired.
	\end{proof}
	
	Let $a,b,c,d\in F^\times$, and consider the complex of tori of \Cref{weil-restrictions-exact}. We define the following groups of multiplicative type over $F$:
	\[P\coloneqq R_{a,c}(\G_{\on{m}})\times R_{b,d}(\G_{\on{m}}),\qquad S\coloneqq \on{Ker}(\psi)=\on{Im}(\varphi),\qquad T\coloneqq \on{Ker}(\varphi)\subset P.\]
	By \Cref{weil-restrictions-exact}, we get a short exact sequence
	\begin{equation}\label{t-p-s-2}
		1\to T\xrightarrow{\iota} P\xrightarrow{\pi} S\to 1,
	\end{equation}
	where $\iota$ is the inclusion map and $\pi$ is induced by $\varphi$.

	\begin{lemma}\label{t-is-torus}
		The groups of multiplicative type $T$, $P$ and $S$ are tori.
	\end{lemma}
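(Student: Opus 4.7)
The plan is to treat $P$, $S$, and $T$ in sequence, leveraging the factorization $\varphi=\mu\circ(N_a\times N_d)$, where $\mu\colon R_c(\G_{\on{m}})\times R_b(\G_{\on{m}})\to R_{b,c}(\G_{\on{m}})$ is ordinary multiplication in $R_{b,c}(\G_{\on{m}})$. The claim for $P=R_{a,c}(\G_{\on{m}})\times R_{b,d}(\G_{\on{m}})$ is immediate since $P$ is a product of quasi-trivial tori.

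For $S$, I would first invoke \Cref{b-1-c-1} (applied to functorial points over any $F$-algebra $R$) to identify $S=\on{Ker}(\psi)$ with $\on{Im}(\mu)$. Next I would compute $\on{Ker}(\mu)$: after base change to $F_s$, the decomposition $G_{b,c}=G_b\times G_c$ translates into linear disjointness of $F_b$ and $F_c$ over $F$ inside $F_{b,c}$, so if $xy=1$ with $x\in F_b\otimes F_s$ and $y\in F_c\otimes F_s$, then $x=y^{-1}$ lies in the intersection $(F_b\otimes F_s)\cap(F_c\otimes F_s)=F_s$. Hence $\on{Ker}(\mu)=\G_{\on{m}}$, embedded diagonally by $\alpha\mapsto(\alpha,\alpha^{-1})$. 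Since $\on{Ker}(\mu)$ is a subgroup scheme of multiplicative type it is determined by its character module, so the $F_s$-computation suffices. Therefore $S\simeq(R_c(\G_{\on{m}})\times R_b(\G_{\on{m}}))/\G_{\on{m}}$ is a quotient of a torus by a subtorus, hence a torus.

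For $T=\on{Ker}(\varphi)$, the standard surjectivity of the norm maps $N_a$ and $N_d$ (on $F_s$-points each becomes a product of norms $\G_{\on{m}}^p\to\G_{\on{m}}$) makes $N_a\times N_d$ surjective onto $R_c(\G_{\on{m}})\times R_b(\G_{\on{m}})$. Restricting to $T$ yields a surjection $T\twoheadrightarrow\on{Ker}(\mu)=\G_{\on{m}}$ whose kernel is $\on{Ker}(N_a)\times\on{Ker}(N_d)$, producing the short exact sequence
\[
1\to\on{Ker}(N_a)\times\on{Ker}(N_d)\to T\to\G_{\on{m}}\to 1.
\]
Each factor $\on{Ker}(N_a)$ is a norm-one torus for the degree-$p$ extension $F_{a,c}/F_c$ (Weil-restricted to $F$), hence a torus. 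Since $T$ is a closed subgroup of the torus $P$ it is automatically of multiplicative type; the dual of the displayed sequence exhibits $\hat T$ as an extension of free $\Z$-modules of finite rank, hence $\hat T$ is free and $T$ is a torus.

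The only delicate step I anticipate is verifying that the displayed sequence is short exact as a sequence of algebraic groups; this reduces to the surjectivity of $N_a\times N_d$ noted above. The rest is standard torus bookkeeping: the key structural insight is that the multiplication map $\mu$ already exhibits $S$ as a torus quotient, which dispenses with the need for any direct character-module torsion computation for the quotient $\hat{R_{b,c}(\G_{\on{m}})}/(\sigma_b-1)(\sigma_c-1)\hat{R_{b,c}(\G_{\on{m}})}$.
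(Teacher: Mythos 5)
Your proof is correct, and it shares the paper's overall strategy---realize $T$ as an extension of one torus by another inside $P$---but via a genuinely different decomposition. The paper projects $T$ onto the first factor $R_{a,c}(\G_{\on{m}})$ of $P$: the kernel is $\on{Ker}(N_d)=R_b(R^{(1)}_{F_{b,d}/F_b}(\G_{\on{m}}))$, the image lands in the auxiliary torus $Q$ of (\ref{cartesian-q}) (the preimage of $\G_{\on{m}}\subset R_c(\G_{\on{m}})$ under $N_a$), and surjectivity onto $Q$ is then forced by comparing $\dim T$, read off from (\ref{t-p-s-2}), with $\dim U+\dim Q$. You instead push $T$ all the way forward along $N_a\times N_d$ onto $\on{Ker}(\mu)\simeq\G_{\on{m}}$ (embedded antidiagonally), so the kernel is the full product $\on{Ker}(N_a)\times\on{Ker}(N_d)$ and surjectivity is immediate from the surjectivity of $N_a\times N_d$---the same fact the paper already exploits in \Cref{weil-restrictions-exact}. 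This buys you a shorter argument: no dimension count and no auxiliary torus $Q$, at the price of first identifying $\on{Ker}(\mu)$ with $\G_{\on{m}}$, which you do correctly over $F_s$ and then descend via character modules. Your concluding step (a closed subgroup of the torus $P$ is of multiplicative type, and an extension of free character lattices is free) is sound, and your dimensions match the paper's: $2p(p-1)+1=2p^2-2p+1$. For $P$ and $S$ the paper simply declares the claim clear, since $S=\on{Im}(\varphi)$ is a quotient of the torus $P$; your explicit presentation of $S$ as $(R_c(\G_{\on{m}})\times R_b(\G_{\on{m}}))/\G_{\on{m}}$ is a harmless refinement of this.
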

	
	\begin{proof}
		It is clear that $P$ and $S$ are tori. We now prove that $T$ is a torus. Consider the subgroup $Q\subset R_{a,c}(\G_{\on{m}})$ which makes the following commutative square cartesian:
		\begin{equation}\label{cartesian-q}
			\begin{tikzcd}
				Q \arrow[r,hook] \arrow[d] & R_{a,c}(\G_{\on{m}}) \arrow[d,"N_a"] \\
				\G_{\on{m}} \arrow[r,hook] & R_c(\G_{\on{m}}).
			\end{tikzcd}
		\end{equation}
		Here the bottom horizontal map is the obvious inclusion. It follows that $Q$ is an $R_c(R^{(1)}_a(\G_{\on{m}}))$-torsor over $\G_{\on{m}}$, and hence it is smooth and connected. Therefore $Q$ is a torus.
		
		The image of the projection $T \xhookrightarrow{\iota} P\to R_{a,c}(\G_{\on{m}})$ is contained in the torus $Q$. Moreover, the kernel $U$ of the projection is $R_b(R^{(1)}_{F_{b,d}/F_b}(\G_{\on{m}}))$, and hence it is also a torus. We have an exact sequence
		\[1\to U\to T\to Q.\]
		We have $\dim(U) = p(p-1)$. From (\ref{t-p-s-2}), we see that $\dim (T) = 2p^2 -2p+1$, and from (\ref{cartesian-q}) that $\dim (Q) = p^2 - p +1$. Therefore $\dim (T) = \dim (U) + \dim (Q)$, and so the sequence 
		\[1 \to U \to T \to Q \to 1\]
		is exact. As $U$ and $Q$ are tori, so is $T$.
	\end{proof}
	
	\begin{prop}\label{generic-var-generic}
		Let $p$ be a prime, let $F$ be a field of characteristic different from $p$ and containing a primitive $p$-th root of unity $\zeta$, and let $a,b,c,d\in F^\times$. Suppose that $(a,b)=(b,c)=(c,d)=0$ in $\on{Br}(F)$, and let $w\in F_{b,c}^\times$ be such that
		$(\sigma_b-1)(\sigma_c-1)w = \zeta$. Let $T$ and $P$ be the tori appearing in (\ref{t-p-s-2}), and let $E_w\subset P$ be the $T$-torsor given by the equation $N_a(u)N_d(v)=w^p$.
		Then the mod $p$ Massey product $\ang{a,b,c,d}$ is defined if and only if $E_w$ is trivial.
	\end{prop}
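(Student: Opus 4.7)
The plan is to reduce the statement to the existence of an $F$-rational point on $E_w$, and then apply Propositions \ref{uu5} and \ref{condition-fixed-w}.

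First, I would verify that $E_w$ really is a $T$-torsor. The fibers of $\varphi\colon P\to S$ are naturally torsors under $T=\on{Ker}(\varphi)$, so it suffices to check that $w^p$ lies in $S(F)$. This is immediate from the hypothesis: since $(\sigma_b-1)(\sigma_c-1)w=\zeta$, applying $\psi$ gives $\psi(w^p)=\zeta^p=1$, so $w^p\in\on{Ker}(\psi)(F)=S(F)$, and the preimage $\varphi^{-1}(w^p)=E_w$ is a $T$-torsor.

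Next, since $T$ is a torus by \Cref{t-is-torus}, in particular smooth, the torsor $E_w$ is trivial if and only if $E_w(F)\neq\emptyset$. Unwinding definitions, an $F$-rational point of $E_w$ is exactly a pair $(u,v)\in P(F)=F_{a,c}^\times\times F_{b,d}^\times$ satisfying $N_a(u)N_d(v)=w^p$.

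Finally, I combine the two key inputs from the previous section. If $E_w$ is trivial, the existence of such $(u,v)$, together with the given $w$, furnishes exactly the data required by \Cref{uu5}, so $\ang{a,b,c,d}$ is defined. Conversely, if $\ang{a,b,c,d}$ is defined, then \Cref{condition-fixed-w} applied to our specific $w$ produces $u\in F_{a,c}^\times$ and $v\in F_{b,d}^\times$ with $N_a(u)N_d(v)=w^p$, giving an $F$-rational point of $E_w$. There is no real obstacle here: the content of the proposition has been front-loaded into the careful statements of \Cref{uu5} and \Cref{condition-fixed-w} (the latter being precisely what allows us to work with a single prescribed $w$ rather than an arbitrary one, which is crucial since the definition of $E_w$ depends on a fixed $w$).
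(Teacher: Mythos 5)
Your proof is correct and follows essentially the same route as the paper: the paper's (one-sentence) proof likewise invokes \Cref{condition-fixed-w} for the ``defined $\Rightarrow$ $E_w$ trivial'' direction and (implicitly) \Cref{uu5} for the converse, with the equivalence between triviality of $E_w$ and existence of an $F$-point left tacit. Your additional checks that $w^p\in S(F)$ and that triviality of a torsor under the smooth group $T$ amounts to having a rational point are correct and only make explicit what the paper leaves implicit.
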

	
	The construction of $E_w$ is functorial in $F$. Therefore, for every field extension $K/F$, the mod $p$ Massey product $\ang{a,b,c,d}$ is defined if and only if $E_w$ is split by $K$. We may thus call $E_w$ a generic variety for the property ``the Massey product $\ang{a,b,c,d}$ is defined.''
	
	\begin{proof}
		By \Cref{condition-fixed-w}, the Massey product $\ang{a,b,c,d}$ is defined over $F$ if and only if there exist $u\in F_{a,c}^\times$ and $v\in F_{b,d}^\times$ such that the equation $N_a(u)N_d(v)=w^p$ has a solution over $F$, that is, if and only if the $T$-torsor $E_w$ is trivial.
	\end{proof}
	
	\begin{cor}\label{generic-var}
		Let $p$ be a prime, let $F$ be a field of characteristic different from $p$ and containing a primitive $p$-th root of unity $\zeta$, and let $a,b,c,d\in F^\times$. Let $X$ be the Severi-Brauer variety of $(b,c)$ over $F$, fix $w\in F_{b,c}(X)^\times$ as in \Cref{construct-w}(3), and let $E_w\subset P_{F(X)}$ be the $T_{F(X)}$-torsor given by the equation $N_a(u)N_d(v)=w^p$.
		
		Then $\ang{a,b,c,d}$ is defined over $F(X)$ if and only if $E_w$ is trivial over $F(X)$.
	\end{cor}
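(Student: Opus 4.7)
The plan is to deduce this directly from \Cref{generic-var-generic} by base-changing to the function field $F(X)$. Explicitly, I would apply \Cref{generic-var-generic} with $F$ replaced by $F(X)$, with the same quadruple $a,b,c,d\in F^\times\subset F(X)^\times$, and with the rational function $w$ supplied by \Cref{construct-w}(3).

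The verification of hypotheses is routine. Since $F(X)$ contains $F$, it also contains the primitive $p$-th root of unity $\zeta$ and has characteristic different from $p$. The Brauer vanishing conditions $(a,b)=0$ and $(c,d)=0$ over $F$ (which are part of the setting of the intended application) pass to $F(X)$, and $(b,c)=0$ over $F(X)$ is the defining splitting property of the Severi-Brauer variety $X$, as recorded at the opening of \Cref{generic-variety-section} via \cite[Theorem 5.4.1]{gille2017central}. The condition $(\sigma_b-1)(\sigma_c-1)w=\zeta$ needed to invoke \Cref{generic-var-generic} is built into the construction of $w$ in \Cref{construct-w}(3).

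The tori $T_{F(X)}$ and $P_{F(X)}$ appearing in the statement are precisely the base change to $F(X)$ of the tori defined via the sequence (\ref{t-p-s-2}) for the quadruple $a,b,c,d$, and the subscheme of $P_{F(X)}$ cut out by $N_a(u)N_d(v)=w^p$ is exactly the $T_{F(X)}$-torsor produced by \Cref{generic-var-generic} applied over $F(X)$. Thus the desired equivalence follows verbatim from that proposition. There is no real obstacle; this corollary simply repackages \Cref{generic-var-generic} so that the geometric function $w\in F_{b,c}(X)^\times$ constructed in \Cref{construct-w}(3) can be fed into it, which is the form that will be used in the construction of the explicit counterexample in \Cref{section-5}.
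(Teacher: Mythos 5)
Your proposal is correct and is exactly the paper's argument: the corollary is deduced as a special case of \Cref{generic-var-generic} applied over the ground field $F(X)$, with the hypotheses (including $(b,c)=0$ over $F(X)$ and the identity $(\sigma_b-1)(\sigma_c-1)w=\zeta$ from \Cref{construct-w}(3)) verified just as you describe. The paper states this in one line; your additional hypothesis-checking is consistent with its intent.
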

	
	\begin{proof}
		This is a special case of \Cref{generic-var-generic}, applied over the ground field $F(X)$.
	\end{proof}
	
	\section{Proof of Theorem \ref{main-explicit}}\label{section-5}

	Let $p$ be a prime, and let $F$ be a field of characteristic different from $p$ and containing a primitive $p$-th root of unity $\zeta$. Let $a,b,c,d\in F^\times$ be such that their cosets in $F^\times/F^{\times p}$ are $\F_p$-linearly independent. 
	Consider the field $K\coloneqq F_{a,b,c,d}$, and write $G=\on{Gal}(K/F)=\ang{\sigma_a,\sigma_b,\sigma_c,\sigma_d}$ as in \Cref{kummer-sub}. We set $N_a\coloneqq \Sum_{j=0}^{p-1}\sigma_a^j\in \Z[G]$. For every subgroup $H$ of $G$, we also write $N_a$ for the image of $N_a\in \Z[G]$ under the canonical map $\Z[G]\to \Z[G/H]$. We define $N_b$, $N_c$ and $N_d$ in a similar way.
	
	Let
	\[1\to T\xrightarrow{\iota} P\xrightarrow{\pi} S\to 1\]
	be the short exact sequence of $F$-tori (\ref{t-p-s-2}). It induces a short exact sequence of cocharacter $G$-lattices 
	\[0\to T_*\xrightarrow{\iota_*} P_*\xrightarrow{\pi_*} S_*\to 1.\]
	By definition of $P$ and $S$, we have 
	\[P_*=\Z[G/\langle \sigma_b,\sigma_d\rangle]\oplus \Z[G/\langle \sigma_a,\sigma_c\rangle],\qquad S_*=\langle N_b,N_c\rangle\subset \Z[G/\langle\sigma_a,\sigma_d\rangle ].\]
	Let $X$ be the Severi-Brauer variety associated to $(b,c)\in \on{Br}(F)$. Since $X_K\simeq \P^{p-1}_K$, the degree map $\on{Pic}(X_K)\to \Z$ is an isomorphism, and so the map $\on{Div}(X_K)\to \on{Pic}(X_K)$ is identified with the degree map $\deg\colon \on{Div}(X_K)\to \Z$. The sequence (\ref{4exact-t}) for the torus $T$ thus takes the form
	\begin{equation}\label{4exact-t-deg}1\to T(K) \to T(K(X))\xrightarrow{\on{div}}\on{Div}(X_K)\otimes T_*\xrightarrow{\deg} T_*\to 0,
	\end{equation}
	where $T_*$ denotes the cocharacter lattice of $T$.
	
	\begin{lemma}\label{invar}
		(1)  We have $(T_*)^G=\Z\cdot\eta$, where $\iota_*(\eta)=(N_a N_c,-N_b N_d)$ in $(P_*)^G$.
		
		(2) If $(b,c)\neq 0$ in $\on{Br}(F)$, the image of $\deg\colon (\on{Div}(X_{b,c})\otimes T_*)^G\to (T_*)^G$ is equal to $p(T_*)^G$.
	\end{lemma}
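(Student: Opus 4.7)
The plan is to reduce both parts to concrete lattice calculations, with part (2) additionally invoking the Lichtenbaum input recorded in \Cref{construct-w}(2).

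For (1), I would first note that $(\Z[G/H])^G = \Z\cdot N_{G/H}$ for every subgroup $H\leq G$, which gives $(P_*)^G = \Z\cdot(N_aN_c, 0)\oplus\Z\cdot(0, N_bN_d)$. I then compute $\pi_*$ on each generator: on the first factor, $\pi_*$ factors as the projection $\Z[G/\ang{\sigma_b,\sigma_d}]\to\Z[G/\ang{\sigma_a,\sigma_b,\sigma_d}]$ (the cocharacter map of $N_a\colon R_{a,c}(\G_{\on{m}})\to R_c(\G_{\on{m}})$), followed by the cocharacter map $\xi\mapsto N_b\xi$ of the inclusion $R_c(\G_{\on{m}})\hookrightarrow R_{b,c}(\G_{\on{m}})$, and symmetrically on the second factor. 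A short calculation yields $\pi_*(N_aN_c, 0) = pN_bN_c = \pi_*(0, N_bN_d)$, whence the kernel of $\pi_*\colon (P_*)^G \to (S_*)^G$ is generated by $(N_aN_c, -N_bN_d)$. By injectivity of $\iota_*$, we conclude $(T_*)^G = \Z\cdot\eta$ with $\iota_*(\eta) = (N_aN_c, -N_bN_d)$.

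For (2), the key observation is that the subgroup $H\coloneqq\ang{\sigma_a,\sigma_d}$ acts trivially on $X_{b,c}$ (since $X_{b,c}$ is defined over $F_{b,c} = F^H$), and hence trivially on $\on{Div}(X_{b,c})$. Taking $H$-invariants therefore gives
\[
(\on{Div}(X_{b,c})\otimes T_*)^H = \on{Div}(X_{b,c})\otimes (T_*)^H.
\]
A computation analogous to (1), starting from $(P_*)^H = N_a\Z[\sigma_c]\oplus N_d\Z[\sigma_b]$, shows $(T_*)^H = \Z\eta = (T_*)^G$. Since $G/H = \on{Gal}(F_{b,c}/F)$ then acts trivially on $(T_*)^H$, passing to $G/H$-invariants yields
\[
(\on{Div}(X_{b,c})\otimes T_*)^G = \on{Div}(X_{b,c})^{G/H}\otimes \Z\eta.
\]
By Galois descent for the \'etale cover $X_{b,c}\to X$, one has $\on{Div}(X_{b,c})^{G/H} = \on{Div}(X)$, and the map to $(T_*)^G = \Z\eta$ becomes $D\otimes\eta\mapsto\deg(D)\cdot\eta$. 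Since this factors through $\on{Pic}(X)$, its image equals that of $\deg\colon\on{Pic}(X)\to\Z$, which by \Cref{construct-w}(2) is $p\Z$ under the assumption $(b,c)\neq 0$ in $\on{Br}(F)$.

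The main obstacle is the identity $(T_*)^H = (T_*)^G$ invoked in (2). A priori one might expect more invariants under the smaller subgroup $H$, but the explicit form of $\pi_*$ forces the extra freedom in $(P_*)^H$ to collapse upon intersecting with $\on{Ker}(\pi_*) = T_*$. Once this identity is in hand, (2) reduces cleanly to the Lichtenbaum input from \Cref{construct-w}(2).
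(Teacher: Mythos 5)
Your proof of part (1) is essentially the paper's own argument: identify $(P_*)^G$ with $\Z\cdot(N_aN_c,0)\oplus\Z\cdot(0,N_bN_d)$, compute $\pi_*$ on the generators (both map to $pN_bN_c$), and take the kernel. For part (2), however, you take a genuinely different route, and it works. The paper decomposes $\on{Div}(X_{b,c})$ into $G$-orbits $\Z[G/H_x]$, writes the restriction of $\deg$ to each summand $(T_*)^{H_x}$ as $\deg(x')\cdot N_{G/H_x}$, and runs a case analysis: when $H_x=G$ it invokes \Cref{construct-w}(2), and when $H_x\neq G$ it shows that the norm $N_{G/H_x}$ already lands in $p(T_*)^G$, because it factors through a norm that multiplies the $\Z[G/\ang{\sigma_b,\sigma_d}]$-component by $[N:H_x]\in p\Z$ while $N_aN_c$ is not divisible by $p$ there. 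You replace this case analysis by the single identity $(T_*)^{\ang{\sigma_a,\sigma_d}}=(T_*)^G=\Z\cdot\eta$, which is correct: an element $\Sum_j m_j(N_a\sigma_c^j,0)+\Sum_i n_i(0,N_d\sigma_b^i)$ of $(P_*)^{\ang{\sigma_a,\sigma_d}}$ maps under $\pi_*$ to $p\Sum_j m_jN_b\sigma_c^j+p\Sum_i n_iN_c\sigma_b^i$, whose coefficient at $\sigma_b^i\sigma_c^j$ is $p(m_j+n_i)$, so membership in $T_*$ forces all $m_j$ equal to some $m$ and all $n_i=-m$, i.e.\ the element is $m\eta$. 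Since $\ang{\sigma_a,\sigma_d}$ acts trivially on the free abelian group $\on{Div}(X_{b,c})$ and $G$ acts trivially on $\Z\cdot\eta$, the invariants collapse to $\on{Div}(X_{b,c})^{G/\ang{\sigma_a,\sigma_d}}\otimes\Z\eta=\on{Div}(X)\otimes\Z\eta$, and the image of $\deg$ is exactly $\deg(\on{Pic}(X))\cdot\eta=p\Z\cdot\eta$ by \Cref{construct-w}(2). This gives both inclusions simultaneously and avoids the orbit-by-orbit estimate; the trade-off is that you must establish the (true but not a priori obvious) coincidence $(T_*)^{\ang{\sigma_a,\sigma_d}}=(T_*)^G$, whereas the paper only needs a divisibility statement for each stabilizer separately. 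Both arguments consume the Lichtenbaum input at the same point.
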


	\begin{proof}
		(1)	The free $\Z$-module $(P_*)^G$ has a basis consisting of the elements $(N_aN_c,0)$ and $(0,N_bN_d)$. The map $\pi_*\colon P_* \to S_*\subset \Z[G/\ang{\sigma_a,\sigma_d}]$
		takes $(1, 0)$ to $N_b$ and $(0, 1)$ to $N_c$. It follows that $\on{Ker}(\pi_*)^G$ is generated by $(N_aN_c, -N_bN_d)$.
		
		(2) By Lemma \ref{construct-w}(2), the image of
		the composition
		\[\on{Div}(X)\otimes T_*^G=(\on{Div}(X)\otimes T_*)^G\to(\on{Div}(X_{b,c})\otimes T_*)^G\xrightarrow{\on{\deg}} (T_*)^G\]
		is equal to $p(T_*)^G$.
		Thus the image of the degree map contains $p(T_*)^G$. We now show that the image the degree map is contained in $p(T_*)^G$.
		
		For every $x\in X^{(1)}$, pick $x'\in (X_{b,c})^{(1)}$ lying over $x$, and write $H_x$ for the $G$-stabilizer of $x'$. The injective homomorphisms of $G$-modules
		\[j_x\colon \Z[G/H_x]\hookrightarrow \on{Div}(X_{b,c}),\qquad gH_x\mapsto g(x')\]
		yield an isomorphism of $G$-modules 
		\[\oplus_{x\in X^{(1)}}j_x\colon\oplus_{x\in X^{(1)}}\Z[G/H_x]\xrightarrow{\sim} \on{Div}(X_{b,c}).\] 
		In order to conclude, it suffices to show that the image of
		\begin{equation}\label{degree}
			(T_*)^{H_x}=(\Z[G/H_x]\otimes T_*)^G\to (\on{Div}(X_{b,c})\otimes T_*)^G\xrightarrow{\deg} (T_*)^G
		\end{equation}
		is contained in $p(T_*)^G$ for all $x\in X^{(1)}$. Set $H:=H_x$.
		
		The composition (\ref{degree}) takes a cocharacter $q\in (T_*)^{H}$ to
		\[\deg(\Sum_{gH\in G/H} gx'\otimes gq)=\deg(x')\cdot N_{G/H}(q).\]
		Thus (\ref{degree})
		coincides with the norm map $N_{G/H}$ times the degree of $x'$.
		
		Suppose that $G=H$. Then $\deg(x')=\deg(x)$ and, since $(b,c)\neq 0$, the degree of $x$ is divisible by $p$ by Lemma \ref{construct-w}(2).
		
		Suppose that $G\neq H$. Then either $\langle \sigma_a,\sigma_c\rangle$ or $\langle \sigma_b,\sigma_d\rangle$
		is not contained in $H$. Suppose $\langle \sigma_b,\sigma_d\rangle$ is not in $H$, and let $N$ be the subgroup generated by $H,\sigma_b,\sigma_d$. Note that $H$ is a proper subgroup of $N$.
		
		The norm map $N_{G/H}:(T_*)^H\to (T_*)^G$ is  the composition of the two norm maps
		\[
		(T_*)^H\xrightarrow{N_{N/H}} (T_*)^N\xrightarrow{N_{G/N}} (T_*)^G.
		\]
		Since $\Z[G/\ang{\sigma_b,\sigma_d}]^H=\Z[G/\ang{\sigma_b,\sigma_d}]^N$, the norm map $(T_*)^H\to (T_*)^N$ is multiplication by
		$[N:H]\in p\Z$ on the first component of $T_*$ with respect to the inclusion $\iota_*$ of $T_*$ into $P_*=\Z[G/\ang{\sigma_b,\sigma_d}]\oplus\Z[G/\ang{\sigma_a,\sigma_c}]$. 
		
		By Lemma \ref{invar}(1), $(T_*)^G=\Z\cdot\eta$, where $\iota_*(\eta)=(N_a N_c,-N_b N_d)$ in $(P_*)^G$.
		Since $N_a N_c$ is not divisible by $p$ in $\Z[G/\ang{\sigma_b,\sigma_d}]$, the image of (\ref{degree})
		is contained in $p\Z\cdot\eta=p(T_*)^G$, as desired.
	\end{proof}
	
	We write 
	\[\cl{\eta} \in \on{Coker}[(\on{Div}(X_{b,c})\otimes T_*)^G\xrightarrow{\deg} (T_*)^G]\]
	for the coset of the generator $\eta\in (T_*)^G$ appearing in \Cref{invar}(1). If $(b,c)\neq 0$, then we have $\cl{\eta}\neq 0$ by \Cref{invar}(2). We consider the subgroup of unramified torsors     \[H^1(G,T(K(X)))_{\on{nr}}\coloneqq \on{Ker}[H^1(G,T(K(X)))\xrightarrow{\on{div}}H^1(G,\on{Div}(X_K\otimes T_*))],\]
	and the homomorphism 
	\[\theta\colon H^1(G,T(K(X)))_{\on{nr}}\to \on{Coker}[\on{Div}(X_K)\otimes T_*\xrightarrow{\deg} T_*],\]
	which are defined in (\ref{theta-t}).
	
	\begin{lemma}\label{keyprop}
		Let $b,c\in F^\times$ be such that $(b,c)\neq 0$ in $\on{Br}(F)$, let $w\in F_{b,c}(X)^{\times}$ be such that $(\sigma_b-1)(\sigma_c-1)w=\zeta$ and $\on{div}(w)=x-y$, where $\deg(x)=\deg(y)=1$ and $\sigma_b(x)=x$ and $\sigma_c(y)=y$. Let $E_w\subset P_{F(X)}$ be the $T_{F(X)}$-torsor given by the equation $N_a(u)N_d(v)=w^p$, and write $[E_w]$ for the class of $E_w$ in $H^1(G,T(K(X)))$.
		
		(1) We have $[E_w]\in H^1(G,T(K(X)))_{\on{nr}}$.
		
		(2) Let $\theta$ be the homomorphism of (\ref{theta-t}). We have $\theta([E_w])=-\cl{\eta}\neq 0$.
	\end{lemma}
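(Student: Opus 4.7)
The plan is to identify $[E_w]$ explicitly using the short exact sequence (\ref{t-p-s-2}), and then compute its divisor class and $\theta$-invariant through two diagram chases. Since $P$ is a product of Weil restrictions, it is split by $K$, so Hilbert~90 gives $H^1(K(X),T)=0$, and the sequence (\ref{t-p-s-2}) yields a surjection $P(K(X))\twoheadrightarrow S(K(X))$ together with a connecting map $\delta\colon S(F(X))\to H^1(G,T(K(X)))$. Since $\pi(E_w)=w^p$, we will have $[E_w]=\delta(w^p)$.

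For part~(1), by functoriality of connecting maps applied to the commutative diagram relating (\ref{t-p-s-2}) to its divisor counterpart (the $\deg$-kernel of (\ref{4exact-t-deg})), the vanishing of $\on{div}([E_w])$ in $H^1(G,\on{Div}(X_K)\otimes T_*)$ reduces to lifting the divisor of $w^p\in S(F(X))$---which lies in $(\on{Div}(X_K)\otimes S_*)^G\simeq \on{Div}(X_{b,c})$ and equals $p(x-y)$---under $\varphi_*$ to an element of $(\on{Div}(X_K)\otimes P_*)^G\simeq \on{Div}(X_{a,c})\oplus \on{Div}(X_{b,d})$. Because $\sigma_b x=x$, the divisor $x$ is $\langle\sigma_a,\sigma_b,\sigma_d\rangle$-invariant, hence lies in $\on{Div}(X_{a,c})$; similarly $y\in \on{Div}(X_{b,d})$. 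A direct computation then shows that $(x,-y)$ is the required lift: under $\varphi_*$ on $G$-invariants one obtains $N_b\cdot x-N_c\cdot y=px-py=p(x-y)$, using the fixed-point identities.

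For part~(2), I carry out the diagram chase of (\ref{theta-t}) defining $\theta$. Choose a lift $(\tilde u,\tilde v)\in P(K(X))$ of $w^p$; such a lift exists by the surjectivity observed at the start. Then $c(g)\coloneqq g(\tilde u,\tilde v)\cdot (\tilde u,\tilde v)^{-1}$ is a $1$-cocycle in $Z^1(G,T(K(X)))$ representing $[E_w]$. Set $\xi_0\coloneqq \on{div}(\tilde u,\tilde v)\in \on{Div}(X_K)\otimes P_*$, and let $\xi_1\coloneqq (x,-y)\in (\on{Div}(X_K)\otimes P_*)^G$ be the invariant lift from part~(1). The difference $\xi\coloneqq \xi_0-\xi_1$ maps to $0$ under $\varphi_*$, hence lies in $\on{Div}(X_K)\otimes T_*$, and satisfies $g\xi-\xi=g\xi_0-\xi_0=\on{div}(c(g))$ for all $g\in G$. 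Consequently $\theta([E_w])$ is the class of $\deg(\xi)=\deg(\xi_0)-\deg(\xi_1)$. Since principal divisors have degree zero, $\deg(\xi_0)=0$. On the other hand $\deg(\xi_1)=\deg(x)\cdot N_aN_c-\deg(y)\cdot N_bN_d=\iota_*(\eta)$ by \Cref{invar}(1) and the normalization $\deg(x)=\deg(y)=1$, so $\deg(\xi)=-\eta$ and therefore $\theta([E_w])=-\cl{\eta}$. Nonvanishing of $\cl{\eta}$ is exactly \Cref{invar}(2) together with the hypothesis $(b,c)\neq 0$.

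The main technical obstacle is the book-keeping: one must verify carefully that the identifications of the $G$-invariant tensor products with the concrete divisor groups $\on{Div}(X_{b,c})$, $\on{Div}(X_{a,c})$ and $\on{Div}(X_{b,d})$ are compatible with the $\varphi_*$ and $\deg$ maps as asserted, so that the computations $\varphi_*(x,-y)=p(x-y)$ and $\deg(\xi_1)=\iota_*(\eta)$ genuinely produce the claimed answers. With the homological framework developed in the appendices in hand, this becomes a routine if delicate verification.
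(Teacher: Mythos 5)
Your proposal is correct and follows essentially the same route as the paper: both identify $[E_w]=\partial(w^p)$, exhibit the $G$-invariant lift of $\operatorname{div}(w^p)$ in $(\operatorname{Div}(X_K)\otimes P_*)^G$ corresponding to the pair $(x,-y)$ (the paper's element $f$), and conclude from $\deg(f)=(N_aN_c,-N_bN_d)=\iota_*(\eta)$ together with \Cref{invar}. The only difference is that you re-derive the evaluation $\theta([E_w])=[\deg(\xi_0-\xi_1)]$ by an explicit cocycle chase, whereas the paper invokes the prepackaged formula (\ref{theta'}) (i.e.\ \Cref{partial=partial'}); the signs agree.
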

	
	\begin{proof}
		The $F$-tori $T$, $P$ and $S$ of (\ref{t-p-s-2}) are split by $K=F_{a,b,c,d}$. Therefore, we may consider diagram (\ref{big-diagram}) for the short exact sequence (\ref{t-p-s-2}), the splitting field $K/F$, and $X$ the Severi-Brauer variety of $(b,c)$ over $F$:
		\[
		\begin{tikzcd}    & \on{Div}((X_K)\otimes T_*)^G \arrow[d,hook,"\iota_*"] \arrow[r,"\deg"] & (T_*)^G \arrow[d,hook,"\iota_*"] \\
			P(F(X)) \arrow[d,"\pi_*"] \arrow[r,"{\on{div}}"] & (\on{Div}(X_K)\otimes P_*)^G \arrow[d,"\pi_*"] \arrow[r,"\deg"] & (P_*)^G \arrow[d,"\pi_*"] \\
			S(F(X)) \arrow[d,twoheadrightarrow,"\partial"] \arrow[r,"{\on{div}}"] & (\on{Div}(X_K)\otimes S_*)^G \arrow[d,"\partial"] \arrow[r,"\deg"] & (S_*)^G \\
			H^1(G,T(K(X))) \arrow[r,"{\on{div}}"] & H^1(G,\on{Div}(X_K\otimes T_*)).
		\end{tikzcd}
		\] 
		Since $(\sigma_b-1)(\sigma_c-1)w^p=1$, we have $w^p\in S(F(X))$.
		The image of $w^p$ under $\partial$ is equal to $[E_w]\in H^1(G,T(K(X)))$.
		
		Let $H\subset G$ be the subgroup generated by $\sigma_a$ and $\sigma_d$. The canonical isomorphism
		\[
		\on{Div}(X_{b,c})=\on{Div}(X_K)^H=(\on{Div}(X_K)\otimes \Z[G/H])^G
		\]
		sends the divisor $\on{div}(w)=x-y$ to $\Sum_{i,j}\sigma_b^i\sigma_c^j (x-y)\otimes \sigma_b^i\sigma_c^j$. Therefore, the element $\on{div}(w^p)$ in $(\on{Div}(X_K)\otimes S_*)^G\subset (\on{Div}(X_K)\otimes \Z[G/H])^G$
		is equal to
		\[
		e:=p\Sum_{i,j=0}^{p-1}(\sigma_b^i\sigma_c^j(x-y)\otimes \sigma_b^i\sigma_c^j)=
		p\Sum_{j=0}^{p-1}(\sigma_c^j x \otimes \sigma_c^j N_b)-p\Sum_{i=0}^{p-1}(\sigma_b^i y \otimes \sigma_b^i N_c).
		\]
		Since $S_*$ is the sublattice of $\Z[G/\langle \sigma_a,\sigma_d\rangle]$ generated by $N_b$ and $N_c$, this implies that $e$ belongs to $S_*$. Then $e=\pi_*(f)$, where
		\[
		f\coloneqq\Sum_{j=0}^{p-1}(\sigma_c^j x \otimes \sigma_c^j N_a)-\Sum_{i=0}^{p-1}(\sigma_b^i y \otimes \sigma_b^i N_d)\in (\on{Div}(X_K)\otimes P_*)^G.
		\]
		It follows that $\on{div}(E_w)=\partial(e)=\partial(\pi_*(f))=0$, which proves (1).
		
		Moreover, since $\deg(x)=\deg(y)=1$ we have
		\[
		\deg(f)=(N_a N_c, - N_b N_d)=\iota_*(\eta)\qquad\text{in $(P_*)^G$.}
		\]
		In view of (\ref{theta'}), this implies that $\theta([E_w])=-\cl{\eta}$. We know from \Cref{invar}(2) that $\cl{\eta}\neq 0$. This completes the proof of (2).
	\end{proof}
	
	\begin{proof}[Proof of Theorem \ref{main-explicit}]
		Replacing $F$ by a finite extension if necessary, we may suppose that $F$ contains a primitive $p$-th root of unity $\zeta$. Let $E\coloneqq F(x,y)$, where $x$ and $y$ are independent variables over $F$, let $X$ be the Severi-Brauer variety of the degree-$p$ cyclic algebra $(x,y)$ over $E$, and let $L\coloneqq E(X)$. Consider the following elements of $E^\times$: 
		\[a\coloneqq 1-x,\quad b\coloneqq x,\quad c\coloneqq y,\quad d\coloneqq 1-y.\]
		We have $(a,b)=(c,d)=0$ in $\Br(E)$ by the Steinberg relations \cite[Chapter XIV, Proposition 4(iv)]{serre1979local}, and hence $(a,b)=(b,c)=0$ in $\on{Br}(L)$. Moreover, $(b,c)\neq 0$ in $\Br(E)$ because the residue of $(b,c)$ along $x=0$ is non-zero, while $(b,c)=0$ in $\Br(L)$ by \cite[Theorem 5.4.1]{gille2017central}. Thus $(a,b)=(b,c)=(c,d)=0$ in $\Br(L)$.
		
		Consider the sequence of tori (\ref{t-p-s-2}) over the ground field $E$, associated to the scalars $a,b,c,d\in E^\times$ chosen above:
		\[1\to T\to P\to S\to 1.\]
		Let $E_w\subset P_L$ be the $T_L$-torsor given by the equation $N_a(u)N_d(v)=w^p$. By \Cref{keyprop}(2), the torsor $E_w$ is non-trivial over $L$. Now \Cref{generic-var} implies that the Massey product
		$\langle a,b,c,d\rangle$ is not defined over $L$. In particular, by \Cref{formal-defined}, the differential graded ring $C^{\smallbullet}(\Gamma_L,\Z/p\Z)$ is not formal.
	\end{proof}

	\appendix
	
	\section{Homological algebra}\label{appendix-a}
	Let $G$ be a profinite group, and let \begin{equation}\label{4-term-a}
		0\to A_0\xrightarrow{\alpha_0} A_1\xrightarrow{\alpha_1} A_2\xrightarrow{\alpha_2} A_3\to 0\end{equation}
	be an exact sequence of discrete $G$-modules. We break (\ref{4-term-a}) into two short exact sequences
	\[0\to A_0\xrightarrow{\alpha_0} A_1\to A\to 0,\]
	\[0\to A\to A_2\xrightarrow{\alpha_2} A_3\to 0.\]
	
	We obtain a homomorphism
	\begin{equation}\label{map-phi}\theta\colon \on{Ker}[H^1(G,A_1)\xrightarrow{\alpha_1} H^1(G,A_2)]\to \on{Coker}[A_2^G\xrightarrow{\alpha_2} A_3^G]\end{equation}
	defined as the composition of the map
	\[\on{Ker}[H^1(G,A_1)\xrightarrow{\alpha_1} H^1(G,A_2)]\to \on{Ker}[H^1(G,A)\to H^1(G,A_2)]\]
	and the inverse of the isomorphism
	\begin{equation}\label{map-phi-two}\on{Coker}[A_2^G\xrightarrow{\alpha_2} A_3^G]\xrightarrow{\sim} \on{Ker}[H^1(G,A)\to H^1(G,A_2)]
	\end{equation}
	induced by the connecting homomorphism $A_3^G\to H^1(G,A)$. 
	
	\begin{lemma}\label{theta-exact}
		We have an exact sequence
		\begin{align*}
			H^1(G,A_0)\xrightarrow{\alpha_0} \on{Ker}[H^1(G,A_1)\xrightarrow{\alpha_1}H^1(G,A_2)]\xrightarrow{\theta} \on{Coker}[A_2^G\to A_3^G]\to H^2(G,A_0),
		\end{align*}
		where the last map is defined as the composition of (\ref{map-phi-two}) and the connecting homomorphism $H^1(G,A)\to H^2(G,A_0)$.
	\end{lemma}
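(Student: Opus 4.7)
The plan is to run a diagram chase with the two long exact sequences obtained from the short exact sequences
\[0\to A_0\xrightarrow{\alpha_0} A_1\xrightarrow{\beta} A\to 0,\qquad 0\to A\xrightarrow{\gamma} A_2\xrightarrow{\alpha_2} A_3\to 0,\]
where $A = \on{Im}(\alpha_1)$ and $\alpha_1 = \gamma\circ\beta$. Taking $G$-cohomology of each yields the pieces
\[H^1(G,A_0)\xrightarrow{\alpha_0} H^1(G,A_1)\xrightarrow{\beta_*} H^1(G,A)\xrightarrow{\partial} H^2(G,A_0)\]
and
\[A_2^G\xrightarrow{\alpha_2} A_3^G\xrightarrow{\delta} H^1(G,A)\xrightarrow{\gamma_*} H^1(G,A_2),\]
so that $\delta$ induces the isomorphism (\ref{map-phi-two}) from $\on{Coker}[A_2^G\to A_3^G]$ onto $\on{Ker}(\gamma_*)$. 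Since $\alpha_1 = \gamma_*\circ\beta_*$ on $H^1$, the kernel of $\alpha_1$ on $H^1(G,A_1)$ is exactly $\beta_*^{-1}(\on{Ker}\gamma_*) = \beta_*^{-1}(\on{Im}\delta)$, and $\theta$ sends such an $x$ to the unique class $\bar y\in\on{Coker}[A_2^G\to A_3^G]$ with $\delta(y)=\beta_*(x)$.

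With these identifications the claimed four-term exact sequence becomes a consequence of the two long exact sequences above. Concretely, I verify exactness at each term as follows. First, for any $z\in H^1(G,A_0)$ one has $\beta_*(\alpha_0(z)) = 0$, hence $\theta(\alpha_0(z)) = 0$; conversely if $\theta(x)=0$ then $\beta_*(x) = \delta(\alpha_2(w)) = 0$ for some $w\in A_2^G$, so $x\in\on{Im}(\alpha_0)$ by the first long exact sequence. Next, for exactness at $\on{Coker}[A_2^G\to A_3^G]$, given $x\in\on{Ker}(\alpha_1)$ with $\beta_*(x)=\delta(y)$, the image of $\bar y$ in $H^2(G,A_0)$ is $\partial(\delta(y)) = \partial(\beta_*(x)) = 0$ by exactness of the first sequence; conversely if $\partial(\delta(y)) = 0$ then $\delta(y)\in\on{Im}(\beta_*)$, say $\delta(y) = \beta_*(x)$, and then $\gamma_*(\beta_*(x)) = \gamma_*(\delta(y)) = 0$ so $x\in\on{Ker}(\alpha_1)$ and $\theta(x) = \bar y$ by construction.

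Because $\theta$ is defined via the inverse of an isomorphism, each step only requires pulling elements back along $\beta_*$ or $\delta$, which is unambiguous up to the indeterminacies already captured by the cokernel and kernel in the statement. There is no genuine obstacle here; the only thing to be careful about is that in the definition of the last map $\on{Coker}[A_2^G\to A_3^G]\to H^2(G,A_0)$ the composite $\partial\circ\delta$ indeed descends to the cokernel, which is immediate from $\partial\circ\delta\circ\alpha_2 = \partial\circ 0 = 0$. Hence the argument is essentially a formal unwinding of the two associated long exact sequences.
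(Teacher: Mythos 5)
Your proof is correct and is exactly the argument the paper has in mind: the paper's proof is the one-line remark that the statement ``follows from the definition of $\theta$ and the exactness of (\ref{4-term-a})'', and your diagram chase through the two long exact sequences associated to $0\to A_0\to A_1\to A\to 0$ and $0\to A\to A_2\to A_3\to 0$ is the standard unwinding of that remark. No discrepancies to report.
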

	
	\begin{proof}
		The proof follows from the definition of $\theta$ and the exactness of (\ref{4-term-a}).
	\end{proof}

	Consider a commutative diagram of discrete $G$-modules
	\begin{equation}\label{a-b-c}   
		\begin{tikzcd}
			A_0 \arrow[r,hook,"\alpha_0"] \arrow[d,hook,"\iota_0"]  &  A_1 \arrow[r,"\alpha_1"]  \arrow[d,hook,"\iota_1"]  & A_2 \arrow[r,->>,"\alpha_2"] \arrow[d,hook,"\iota_2"] & A_3\arrow[d,hook,"\iota_3"] \\
			B_0 \arrow[r,hook,"\beta_0"] \arrow[d,->>,"\pi_0"]  & B_1 \arrow[r,"\beta_1"] \arrow[d,->>,"\pi_1"] & B_2 \arrow[r,->>,"\beta_2"] \arrow[d,->>,"\pi_2"] & B_3 \arrow[d,->>,"\pi_3"] \\
			C_0 \arrow[r,hook,"\gamma_0"]& C_1 \arrow[r,"\gamma_1"] & C_2 \arrow[r,->>,"\gamma_2"]  & C_3
		\end{tikzcd}
	\end{equation}
	with exact rows and columns. It yields a commutative diagram of abelian groups
	\begin{equation}\label{p}
		\begin{tikzcd}
			A_1^G \arrow[r,"\alpha_1"] \arrow[d,hook,"\iota_1"]  & A_2^G \arrow[r,"\alpha_2"] \arrow[d,hook,"\iota_2"] & A_3^G \arrow[d,hook,"\iota_3"] \\
			B_1^G \arrow[r,"\beta_1"] \arrow[d,"\pi_1"]& B_2^G \arrow[r,"\beta_2"]\arrow[d,"\pi_2"] & B_3^G \arrow[d,"\pi_3"]  \\
			C_1^G \arrow[d,"\partial_1"] \arrow[r,"\gamma_1"]  & C_2^G \arrow[r,"\gamma_2"] \arrow[d,"\partial_2"] & C_3^G\\
			H^1(G,A_1) \arrow[r,"\alpha_1"] & H^1(G,A_2)
		\end{tikzcd}
	\end{equation}
	where the columns are exact and the rows are complexes. Suppose that the connecting homomorphism $\partial_1\colon C_1^G\to H^1(G,A_1)$ is surjective. We define a function
	\[\theta'\colon \on{Ker}[H^1(G,A_1)\xrightarrow{\alpha_1} H^1(G,A_2)]\to \on{Coker}(A_2^G\xrightarrow{\alpha_2} A_3^G)\]
	as follows.
	Let $z\in H^1(G,A_1)$ such that $\alpha_1(z)=0$ in $H^1(G,A_2)$. By assumption, there exists $c_1\in C_1^G$ such that $\partial_1(c_1)=z$. By the exactness of the second column, there exists $b_2\in B_2^G$ such that $\pi_2(b_2)=\gamma_1(c_1)$. By the exactness of the first column and the injectivity of $\iota_3$, there exists a unique element $a_3\in A_3^G$ such that $\beta_2(b_2)=\iota_3(a_3)$. We set \[\theta'(z)\coloneqq a_3+\alpha_2(A_2^G).\] A diagram chase shows that $\theta'$ is a well-defined homomorphism.
	
	\begin{lemma}\label{partial=partial'}
		Let $G$ be a profinite group, and suppose given an exact sequence (\ref{4-term-a}) and a commutative diagram (\ref{a-b-c}) such that the connecting homomorphism $\partial_1\colon C_1^G\to H^1(G,A_1)$ is surjective. Then $\theta=-\theta'$. 
	\end{lemma}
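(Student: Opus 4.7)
The plan is a direct diagram chase in \eqref{a-b-c}, using the surjectivity of $\partial_1$ to produce an explicit cocycle representative of $z$ compatible with both definitions of the map. Fix $z\in \on{Ker}[H^1(G,A_1)\xrightarrow{\alpha_1}H^1(G,A_2)]$. First I would write $z=\partial_1(c_1)$ for some $c_1\in C_1^G$, lift $c_1$ to $\tilde{b}_1\in B_1$ via the surjection $\pi_1$, and observe that the standard cocycle formula for the connecting homomorphism of the first column of \eqref{a-b-c} says $\phi(g)\coloneqq g\tilde{b}_1-\tilde{b}_1\in A_1$ (via the injection $\iota_1$) represents $z$.

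Next I would compute $\theta(z)$ from its definition. Recall that $\theta$ requires producing $b_2\in A_2$ with $\alpha_1(\phi(g))=gb_2-b_2$; then $\alpha_2(b_2)\in A_3^G$ and $\theta(z)=\alpha_2(b_2)+\alpha_2(A_2^G)$. The key observation is that the element $\beta_1(\tilde{b}_1)\in B_2$ and the $G$-fixed element $b_2^*\in B_2^G$ used in the construction of $\theta'$ (chosen so that $\pi_2(b_2^*)=\gamma_1(c_1)$) have the same image under $\pi_2$, namely $\gamma_1(c_1)=\gamma_1\pi_1(\tilde{b}_1)=\pi_2\beta_1(\tilde{b}_1)$. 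By exactness of the middle column of \eqref{a-b-c}, their difference lies in $\iota_2(A_2)$, so there is a unique $b_2\in A_2$ with $\iota_2(b_2)=\beta_1(\tilde{b}_1)-b_2^*$. Using $b_2^*\in B_2^G$ and commutativity of \eqref{a-b-c}, one checks $\iota_2(gb_2-b_2)=g\beta_1(\tilde{b}_1)-\beta_1(\tilde{b}_1)=\iota_2\alpha_1(\phi(g))$, and then injectivity of $\iota_2$ gives $gb_2-b_2=\alpha_1(\phi(g))$ as required.

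Finally, applying $\beta_2$ to $\iota_2(b_2)=\beta_1(\tilde{b}_1)-b_2^*$ and using the exactness $\beta_2\beta_1=0$ of the middle row yields
\[
\iota_3(\alpha_2(b_2))=\beta_2\iota_2(b_2)=-\beta_2(b_2^*)=-\iota_3(a_3),
\]
where $a_3\in A_3^G$ is the element defining $\theta'(z)$. The injectivity of $\iota_3$ then gives $\alpha_2(b_2)=-a_3$ in $A_3^G$, i.e., $\theta(z)=-\theta'(z)$ in $\on{Coker}[A_2^G\xrightarrow{\alpha_2}A_3^G]$. Since $z$ was arbitrary, this proves $\theta=-\theta'$.

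The argument is purely formal; there is no real obstacle beyond the bookkeeping of lifts and the single nontrivial idea that choosing $b_2^*$ to be $G$-fixed is precisely what produces an element of $A_2$ out of $\beta_1(\tilde{b}_1)$ whose cocycle $gb_2-b_2$ represents $\iota_2\alpha_1\phi(g)$, thereby matching the two constructions up to the sign introduced by the equality $\beta_2\iota_2=-(\text{computation via }b_2^*)$.
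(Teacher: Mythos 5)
Your proof is correct and follows essentially the same cocycle-level diagram chase as the paper: both arguments lift $c_1$ to $B_1$, compare $\beta_1(\tilde{b}_1)$ with a $G$-fixed lift of $\gamma_1(c_1)$ to extract the element of $A_2$ witnessing the coboundary condition defining $\theta$, and read off the sign from $\beta_2\beta_1=0$. The only difference is organizational: the paper builds the $G$-fixed lift explicitly as $\beta_1(b_1)-\iota_2(a_2)$ starting from the coboundary relation for $\alpha_1(z)=0$, whereas you start from the arbitrary $G$-fixed lift $b_2^*$ in the definition of $\theta'$ and recover that relation.
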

	
	\begin{proof}
		Let $z\in H^1(G,A_1)$ be such that $\alpha_1(z)=0$ in $H^1(G,A_2)$. Since the map $\partial_1\colon C_1^G\to H^1(G,A_1)$ is surjective, there exists $c_1\in C_1^G$ such that $\partial_1(c_1)=z$. Let $b_1\in B_1$ be such that $\pi_1(b_1)=c_1$, and for all $g\in G$ let $a_{1g}$ be the unique element of $A_1$ such that $\iota(a_{1g})=gb-b$. Then $\partial_1(c_1)$ is represented by the $1$-cocycle $\set{a_{1g}}_{g\in G}$. 
		
		Define $b_2\coloneqq \beta_1(b_1)$ and $c_2\coloneqq \gamma_1(c_1)$, so that $\pi_2(b_2)=c_2$. Since $\alpha_1(z)=0$ is represented by the cocycle $\set{\alpha_1(a_{1g})}_{g\in G}$, we deduce that there exists $a_2\in A_2$ such that $\alpha_1(a_{1g})=ga_2-a_2$ for all $g\in G$. It follows that $gb_2-b_2=\iota_2(ga_2-a_2)$ for all $g\in G$, that is, $b_2-\iota_2(a_2)$ belongs to $B_2^G$. Moreover, we have \[\pi_2(b_2-\iota_2(a_2))=\pi_2(b_2)=\gamma_1(c_1).\]
		
		Finally, we have 
		\[\beta_2(b_2-\iota_2(a_2))=\beta_2(\beta_1(b_1))-\iota_3(\alpha_2(a_2))=\iota_3(-\alpha_2(a_2)).\] 
		By definition, $\theta'(z)=-\alpha_2(a_2)+\alpha_2(A_2^G)$. Note that $\alpha_2(a_2)$ belongs to $A_2^G$ because for all $g\in G$ we have
		\[g\alpha_2(a_2)-\alpha_2(a_2)=\alpha_2(ga_2-a_2)=\alpha_2(\alpha_1(a_{1g}))=0.\]
		For all $g\in G$, let $a_g\in A$ be the image of $a_{1g}$. The homomorphism
		\[\on{Ker}[H^1(G,A_1)\xrightarrow{\alpha_1} H^1(G,A_2)]\to \on{Ker}[H^1(G,A)\to H^1(G,A_2)]\]
		induced by the map $A_1\to A$ sends the class of $\set{a_{1g}}_{g\in G}$ to the class of $\set{a_g}_{g\in G}$. 
		
		The element $a_2\in A_2$ is a lift of $\alpha_2(a_2)$. As $ga_2-a_2=\alpha_1(a_{1g})$ for all $g\in G$, the injective map $A\to A_2$ sends $a_g$ to $ga_2-a_2$ for all $g\in G$. Therefore, the connecting homomorphism $A_3^G\to H^1(G,A)$ sends $\alpha_2(a_2)$ to the class of $\set{a_g}_{g\in G}$. It follows that the isomorphism
		\[\on{Coker}[A_2^G\xrightarrow{\alpha_2} A_3^G]\xrightarrow{\sim} \on{Ker}[H^1(G,A)\to H^1(G,A_2)]\]
		induced by $A_3^G\to H^1(G,A)$ sends $\alpha_2(a_2)+\alpha_2(A_2^G)$ to the class of $\set{a_g}_{g\in G}$. By the definition of $\theta$, we conclude that $\theta(z)=\alpha_2(a_2)+\alpha_2(A_2^G)=-\theta'(z)$.    
	\end{proof}
	
	\section{Unramified torsors under tori}\label{appendix-b}
	Let $F$ be a field, let $X$ be a smooth projective geometrically connected $F$-variety, let $K$ be a Galois extension of $F$ (possibly of infinite degree over $F$), and let $G\coloneqq \on{Gal}(K/F)$. We have an exact sequence of discrete $G$-modules
	\begin{equation}\label{4exact}
		1\to K^\times \to K(X)^\times\xrightarrow{\on{div}} \on{Div}(X_K)\xrightarrow{\lambda} \on{Pic}(X_K)\to 0,
	\end{equation}
	where $\on{div}$ takes a non-zero rational function $f\in K(X)^\times$ to its divisor, and $\lambda$ takes a divisor on $X_K$ to its class in $\on{Pic}(X_K)$.
	
	Let $T$ be an $F$-torus split by $K$. Write $T_*$ for the cocharacter lattice of $T$: it is a finitely generated $\Z$-free $G$-module. Tensoring (\ref{4exact}) with $T_*$, we obtain an exact sequence of $G$-modules
	\begin{equation}\label{4exact-t}
		1\to T(K) \to T(K(X))\xrightarrow{\on{div}} \on{Div}(X_K)\otimes T_*\xrightarrow{\lambda} \on{Pic}(X_K)\otimes T_*\to 0,
	\end{equation}
	where we have used the fact that $K^\times\otimes T_*=T(K)$.
	
	We define the subgroup of unramified torsors \[H^1(G,T(K(X)))_{\on{nr}}\coloneqq \on{Ker}[H^1(G,T(K(X)))\xrightarrow{\on{div}}H^1(G,\on{Div}(X_K\otimes T_*))].\]
	The sequence (\ref{4exact}) is a special case of (\ref{4-term-a}). In this case, the map $\theta$ of (\ref{4-term-a}) takes the form
	\begin{equation}\label{theta-t}\theta\colon H^1(G,T(K(X)))_{\on{nr}}\to \on{Coker}[\on{Div}(X_K)\otimes T_*\xrightarrow{\lambda}\on{Pic}(X_K)\otimes T_*].\end{equation}

	\begin{prop}\label{theta-tori-exact}
		We have an exact sequence 
		\begin{equation*}
			\resizebox{\textwidth}{!}{$
				H^1(G, T(K)) \to H^1(G, T(K(X)))_{\on{nr}}
				\xrightarrow{\theta} \on{Coker} [(\on{Div}(X_K)\otimes T_*)^G \xrightarrow{\lambda}(\on{Pic}(X_K)\otimes T_*)^G] \to H^2(G, T(K)),$}
		\end{equation*}
		where the first map and the last map are induced by (\ref{4exact-t}).
	\end{prop}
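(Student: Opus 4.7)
The plan is to apply Lemma \ref{theta-exact} directly to the four-term exact sequence (\ref{4exact-t}) of discrete $G$-modules, with $A_0 = T(K)$, $A_1 = T(K(X))$, $A_2 = \on{Div}(X_K)\otimes T_*$, and $A_3 = \on{Pic}(X_K)\otimes T_*$. With this choice, the subgroup $\on{Ker}[H^1(G,A_1)\xrightarrow{\alpha_1} H^1(G,A_2)]$ coincides by definition with $H^1(G,T(K(X)))_{\on{nr}}$, and the map $\theta$ produced by Lemma \ref{theta-exact} is, by construction, the map defined in (\ref{theta-t}). So Lemma \ref{theta-exact} will output precisely the four-term exact sequence in the statement, with first map induced by the inclusion $T(K)\hookrightarrow T(K(X))$ from (\ref{4exact-t}) and last map obtained as the composition of the isomorphism (\ref{map-phi-two}) with the connecting homomorphism $H^1(G,A)\to H^2(G,T(K))$.

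The one thing I would carefully verify first is that (\ref{4exact-t}) is actually exact as a sequence of discrete $G$-modules. For this, I would start from (\ref{4exact}) and check exactness term by term: injectivity of $K^\times \hookrightarrow K(X)^\times$ is obvious; exactness at $K(X)^\times$ reduces to the statement that a rational function on $X_K$ with trivial divisor is a global unit, and hence an element of $K^\times$, which uses that $X$ is smooth projective and geometrically integral so $H^0(X_K,\mathcal{O}_{X_K}^\times)=K^\times$; exactness at $\on{Div}(X_K)$ and surjectivity of $\lambda$ are immediate from the definition of $\on{Pic}(X_K)$. Since $T_*$ is a finitely generated free abelian group, tensoring (\ref{4exact}) over $\Z$ with $T_*$ preserves exactness, and the identification $K^\times\otimes T_* = T(K)$ is the standard description of the $K$-points of the $K$-split torus $T$ in terms of its cocharacter lattice.

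This being a direct application of Lemma \ref{theta-exact}, I do not anticipate any real obstacle beyond the bookkeeping above; the nontrivial homological content has already been packaged into Lemma \ref{theta-exact}, so the argument will be very short.
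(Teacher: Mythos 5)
Your proposal is correct and matches the paper's proof, which is exactly the one-line observation that the statement is the special case of \Cref{theta-exact} applied to the four-term sequence (\ref{4exact-t}); your additional verification that (\ref{4exact}) is exact and that tensoring with the free lattice $T_*$ preserves exactness is sound bookkeeping that the paper leaves implicit.
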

	
	\begin{proof}
		This is a special case of \Cref{theta-exact}.
	\end{proof}
	
	By \Cref{partial=partial'}, the map $\theta$ may be computed as follows. Let
	\begin{equation}\label{t-p-s}
		1\to T\xrightarrow{\iota} P\xrightarrow{\pi} S\to 1
	\end{equation}
	be a short exact sequence of $F$-tori split by $K$ such that $P$ is a quasi-trivial torus. Passing to cocharacter lattices, we obtain a short exact sequence of $G$-modules
	\begin{equation}\label{t-p-s-cocharacter}0\to T_*\xrightarrow{\iota_*} P_*\xrightarrow{\pi_*} S_*\to 0.
	\end{equation}

	We tensor (\ref{4exact}) with $T_*$, $P_*$ and $S_*$ respectively, and pass to group cohomology to obtain the following commutative diagram, where the columns are exact and the rows are complexes:
	\begin{equation}\label{big-diagram}
		\begin{tikzcd}    & \on{Div}((X_K)\otimes T_*)^G \arrow[d,hook,"\iota_*"] \arrow[r,"\lambda"] & (\on{Pic}(X_K)\otimes T_*)^G \arrow[d,hook,"\iota_*"] \\
			P(F(X)) \arrow[d,"\pi_*"] \arrow[r,"{\on{div}}"] & (\on{Div}(X_K)\otimes P_*)^G \arrow[d,"\pi_*"] \arrow[r,"\lambda"] & (\on{Pic}(X_K)\otimes P_*)^G \arrow[d,"\pi_*"] \\
			S(F(X)) \arrow[d,twoheadrightarrow,"\partial"] \arrow[r,"{\on{div}}"] & (\on{Div}(X_K)\otimes S_*)^G \arrow[d,"\partial"] \arrow[r,"\lambda"] & (\on{Pic}(X_K)\otimes S_*)^G \\
			H^1(G,T(K(X))) \arrow[r,"{\on{div}}"] & H^1(G,\on{Div}(X_K\otimes T_*)).
		\end{tikzcd}
	\end{equation}
	Note that $\on{Gal}(K(X)/F(X))=G$. Therefore $H^1(G,P(K(X)))$ is trivial, and hence $\partial\colon S(F(X))\to H^1(G,T(K(X)))$ is surjective.
	
	Let $\tau\in H^1(G, T(K(X)))_{\on{nr}}$, choose $\sigma\in S(F(X))$ such that $\partial(\sigma)=\tau$. Then pick $\rho\in (\on{Div}(X_K)\otimes P_*)^G$ such that $\pi_*(\rho)=\on{div}(\sigma)$, and let $t$ be the unique element in $(\on{Pic}(X_K)\otimes T_*)^G$ such that $\lambda(\rho)=\iota_*(t)$. \Cref{partial=partial'} implies
	\begin{equation}\label{theta'}
		\theta(\tau)=-t.
	\end{equation}
	
	Finally, suppose that $K=F_s$ is a separable closure of $F$, so that $G=\Gamma_F$, and write $X_s$ for $X\times_FF_s$. The exact sequence (\ref{4exact-t}) for $K=F_s$ takes the form
	\begin{equation}\label{4exact-t-galois}
		1\to T(F_s) \to T(F_s(X))\xrightarrow{\on{div}} \on{Div}(X_s)\otimes T_*\xrightarrow{\lambda} \on{Pic}(X_s)\otimes T_*\to 0.
	\end{equation}

	We have the inflation-restriction sequence
	\[0\to H^1(F,T(F_s(X)))\xrightarrow{\on{Inf}}H^1(F(X),T)\xrightarrow{\on{Res}}H^1(F_s(X),T).\]
	Since $T$ is defined over $F$, it is split by $F_s$, and hence by Hilbert's Theorem 90 we have $H^1(F_s(X),T)$=0. Thus the inflation map $H^1(F,T(F_s(X)))\to H^1(F(X),T)$ is an isomorphism. We identify $H^1(F,T(F_s(X)))$ with $H^1(F(X),T)$ via the inflation map. If we define \[H^1(F(X),T)_{\on{nr}}\coloneqq \on{Ker}[H^1(F(X),T)\xrightarrow{\on{div}}H^1(F,\on{Div}(X_s)\otimes T_*)],\]
	the map $\theta$ of (\ref{map-phi}) takes the form
	\[\theta\colon H^1(F(X),T)_{\on{nr}}\to \on{Coker}[\on{Div}(X_s)\otimes T_*\to\on{Pic}(X_s)\otimes T_*].\]
	
	\begin{cor}\label{theta-tori-exact-galois}
		We have an exact sequence
		\begin{equation*}
			\resizebox{\textwidth}{!}{$
				H^1(F, T) \to H^1(F(X), T)_{\on{nr}} \xrightarrow{\theta} \on{Coker} [(\on{Div}(X_s)\otimes T_*)^{\Gamma_F}
				\xrightarrow{\lambda} (\on{Pic}(X_s)\otimes T_*)^{\Gamma_F}] \to H^2(F, T),$}
		\end{equation*}
		where the first and last map are induced by (\ref{4exact-t-galois}).
	\end{cor}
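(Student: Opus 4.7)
The plan is to deduce this corollary directly from \Cref{theta-tori-exact} by specializing to $K = F_s$. First I would invoke \Cref{theta-tori-exact} with the Galois extension $K = F_s/F$, so that $G = \Gamma_F$, obtaining an exact sequence
\[
H^1(\Gamma_F, T(F_s)) \to H^1(\Gamma_F, T(F_s(X)))_{\on{nr}} \xrightarrow{\theta} \on{Coker}[(\on{Div}(X_s)\otimes T_*)^{\Gamma_F}\xrightarrow{\lambda}(\on{Pic}(X_s)\otimes T_*)^{\Gamma_F}] \to H^2(\Gamma_F, T(F_s)).
\]
Since $T$ is an $F$-torus split by $F_s$, Hilbert's Theorem 90 gives $H^1(F_s, T) = H^1(F_s(X), T) = 0$, and the identifications $H^i(\Gamma_F, T(F_s)) = H^i(F, T)$ are standard.

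Next I would invoke the inflation-restriction argument recalled in the paragraph preceding the statement: the inflation map $H^1(F, T(F_s(X))) \to H^1(F(X), T)$ is an isomorphism, because its kernel is zero by definition and its cokernel injects into $H^1(F_s(X), T) = 0$. Under this identification, the subgroup of unramified classes on the $\Gamma_F$-cohomology side corresponds to the subgroup $H^1(F(X), T)_{\on{nr}}$ as defined just before the corollary, since both are cut out as the kernel of the same divisor map into $H^1(\Gamma_F, \on{Div}(X_s)\otimes T_*)$. The map $\theta$ in \Cref{theta-tori-exact} is constructed functorially from the four-term sequence (\ref{4exact-t}), so it matches the map $\theta$ of (\ref{map-phi}) applied to (\ref{4exact-t-galois}) under the inflation isomorphism.

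With these translations the exact sequence of \Cref{theta-tori-exact} becomes exactly the one asserted in the corollary. There is no real obstacle here; the only point requiring care is the compatibility of the inflation identification $H^1(\Gamma_F, T(F_s(X))) \cong H^1(F(X), T)$ with both the unramified subgroup and the construction of $\theta$, but both compatibilities are formal consequences of the naturality of the connecting homomorphisms in (\ref{map-phi}) and (\ref{map-phi-two}).
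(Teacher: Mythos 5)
Your proposal is correct and matches the paper's argument: the paper proves this corollary by simply invoking \Cref{theta-tori-exact} with $K=F_s$, relying on the inflation-restriction identification $H^1(F,T(F_s(X)))\simeq H^1(F(X),T)$ already set up in the paragraph preceding the statement. Your added remarks on the compatibility of this identification with the unramified subgroup and with $\theta$ are the same formal points the paper leaves implicit.
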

	
	\begin{proof}
		This is a special case of \Cref{theta-tori-exact}.
	\end{proof}

\end{document}